\newcommand{\N}{\mathbb{N}}
\newcommand{\Z}{\mathbb{Z}}
\newcommand{\R}{\mathbb{R}}
\newcommand{\C}{\mathbb{C}}
\newcommand{\beq}{\begin{eqnarray}}
\newcommand{\eeq}{\end{eqnarray}}
\newcommand{\beqs}{\begin{eqnarray*}}
\newcommand{\eeqs}{\end{eqnarray*}}
\newtheorem{theorem}{Theorem}[section]
\newtheorem{proposition}[theorem]{Proposition}
\newtheorem{lemma}[theorem]{Lemma}
\newtheorem{corollary}[theorem]{Corollary}
\theoremstyle{definition}
\newtheorem{remark}[theorem]{Remark}
\newtheorem{problem}[theorem]{Problem}
\numberwithin{equation}{section}
\begin{document}
\title[Spaces of entire functions with rapid decay on strips]{Sequence space representations for spaces of entire functions with rapid decay on strips}
\author[A. Debrouwere]{Andreas Debrouwere}
\thanks{A. Debrouwere was supported by  FWO-Vlaanderen through the postdoctoral grant 12T0519N}
\address{Department of Mathematics: Analysis, Logic and Discrete Mathematics\\ Ghent University\\ Krijgslaan 281\\ 9000 Gent\\ Belgium}
\email{andreas.debrouwere@UGent.be}

\subjclass[2010]{\emph{Primary.} 46E10, 46A45, 46A63. \emph{Secondary.} 42B10.}
\keywords{spaces of entire functions with rapid decay on strips; sequence space representations; Gelfand-Shilov spaces; time-frequency analysis methods in functional analysis.}

\begin{abstract}
We obtain sequence space representations for a class of Fr\'echet spaces of entire functions with rapid decay on horizontal strips. In particular, we show that the projective Gelfand-Shilov spaces $\Sigma^1_\nu$ and $\Sigma^\nu_1$ are isomorphic to $\Lambda_{\infty}(n^{1/(\nu+1)})$ for $\nu > 0$. 
\end{abstract}
\maketitle

\section{Introduction}
The representation of (generalized) function spaces by sequence spaces is a classical and well-studied topic in functional analysis, see e.g.\ \cite{Bargetz, C-G-P-R, Langenbruch1987, Langenbruch1988, Langenbruch2012, Langenbruch2016, O-W, Valdivia1978, Valdivia1981, Vogt1983}. Apart from their inherent significance, such representations are important in connection with the existence of Schauder bases and the problem of isomorphic classification.

Langenbruch  \cite{Langenbruch2012,Langenbruch2016} obtained sequence space representations for a class of weighted $(LB)$-spaces of analytic germs defined on strips near $\R$. More precisely, let $\omega : [0,\infty) \rightarrow [0,\infty)$ be an increasing continuous function with $\omega(0) = 0$ and $\log t = o(\omega(t))$. Assume that $\omega$ satisfies  
\begin{equation}
\exists C, \mu > 0 \, \forall t,s \geq 0 \, : \, \omega(t+s) \leq Ce^{\mu s}(\omega(t)+1).
\label{translation-invariant}
\end{equation}
The above condition is equivalent to $\omega(t+1) = O (\omega(t))$. For $h > 0$ we write $V_h = \R + i(-h,h)$.  Consider the $(LB)$-space 
$$
\mathcal{H}_{\omega}(\R) := \bigcup_{n \in \Z_+} \{ \varphi \in \mathcal{O}(V_{1/n}) \, | \, \sup_{z \in V_{1/n}} |\varphi(z)| e^{\omega(|\operatorname{Re} z|)/n} < \infty \}.
$$
Such spaces generalize the test function space of the Fourier hyperfunctions \cite{Kawai}. In \cite[Theorem 4.4]{Langenbruch2012} it is shown that $\mathcal{H}_{\omega}(\R)$ is (tamely) isomorphic to the strong dual of some power series space $\Lambda_0(\beta)$ of finite type. Later on, Langenbruch \cite[Theorem 4.6]{Langenbruch2016}  proved that up to $O$-equivalence the sequence  $\beta$ is given by $(\omega^*(n))_{n \in \N}$, where $\omega^* = (s\omega^{-1}(s))^{-1}$. Hence, $\mathcal{H}_{\omega}(\R) \cong (\Lambda_0(\omega^*(n)))'_b$. To this end, he calculated the diametral dimension of $(\mathcal{H}_{\omega}(\R))'_b$. This result implies that the Gelfand-Shilov spaces $\mathcal{S}^1_\nu$ and $\mathcal{S}^\nu_1$ \cite{GS} are isomorphic to $(\Lambda_{0}(n^{1/(\nu+1)}))'_b$ for $\nu > 0$ \cite[Example 5.4]{Langenbruch2016}.

In the present article we obtain similar results for the Fr\'echet spaces 
$$
\mathcal{U}_{\omega}(\C) :=  \{ \varphi \in \mathcal{O}(\C) \, | \, \sup_{z \in V_{n}} |\varphi(z)| e^{n\omega(|\operatorname{Re} z|)} < \infty, \, \forall n \in \N \},
$$
which are  the projective counterparts of the spaces $\mathcal{H}_{\omega}(\R)$. Such spaces generalize the test function space of the Fourier ultrahyperfunctions \cite{PM}. Consider the following two conditions on $\omega$:
\begin{equation}
\forall \mu > 0 \, \exists C > 0 \, \forall t,s \geq 0 \, : \, \omega(t+s) \leq Ce^{\mu s}(\omega(t)+1),
\label{general}
\end{equation}
and
\begin{equation}
\exists C > 0 \, \forall t,s \geq 0 \, : \,\omega(t+s) \leq C(\omega(t) + \omega(s) + 1).
\label{general-alpha}
\end{equation}
Condition \eqref{general-alpha} is equivalent to $\omega(2t) = O(\omega(t))$, which is the well-known condition $(\alpha)$ from \cite{BMT}. Clearly, \eqref{general-alpha} implies \eqref{general}. The two main results of this article may now be formulated as follows.
\begin{theorem}\label{main-intro} \mbox{}
\begin{itemize}
\item[$(i)$] If $\omega$ satisfies \eqref{general}, then $\mathcal{U}_{\omega}(\C)$ is isomorphic to some power series space of infinite type.
\item[$(ii)$] If $\omega$ satisfies \eqref{general-alpha}, then $\mathcal{U}_{\omega}(\C) \cong \Lambda_\infty(\omega^*(n))$.
\end{itemize}
\end{theorem}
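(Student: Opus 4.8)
The approach I would take is to convert $\mathcal{U}_\omega(\C)$ into an explicit K\"othe sequence space via time-frequency analysis, and then recognise that sequence space as a power series space of infinite type.

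\smallskip
\emph{Step 1: restriction to $\R$.} I would first show that $\varphi\mapsto\varphi|_\R$ is a topological isomorphism of $\mathcal{U}_\omega(\C)$ onto
\[
\mathcal{S}:=\{f\in C(\R)\,:\,\|e^{\lambda\omega(|\cdot|)}f\|_{L^\infty}<\infty\ \text{and}\ \|e^{\lambda|\cdot|}\widehat f\|_{L^\infty}<\infty,\ \forall\lambda>0\},
\]
a Fr\'echet space on the real line of Gelfand--Shilov type. The nonobvious point is that every $f\in\mathcal{S}$ extends to an element of $\mathcal{U}_\omega(\C)$: the decay of $\widehat f$ forces $f$ to extend holomorphically to $\C$ and to be bounded on every strip $V_n$, and a Phragm\'en--Lindel\"of argument on the half-strips $\{0<\operatorname{Im}z<n\}$ and $\{-n<\operatorname{Im}z<0\}$ — majorising $\log|f|$ by the harmonic function carrying its boundary data, whose Poisson kernel on the strip decays exponentially in the horizontal variable — propagates the $\omega$-decay from $\R$ into each $V_n$. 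Condition \eqref{general} is used exactly here, to bound $\int P(x-t,y)\omega(|t|)\dt$ from below by $c\,\omega(|x|)-C$ with $c>0$ independent of $\lambda$. Continuity of the inverse map then follows from the open mapping theorem.

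\smallskip
\emph{Step 2: Gabor frame discretisation.} Fix a Gaussian window $g$ and lattice constants $\alpha,\beta>0$ so small that $\{M_{\beta l}T_{\alpha k}g\}_{k,l\in\Z}$ (modulations and translations of $g$) is a Gabor frame for $L^2(\R)$. I would prove that the analysis operator $f\mapsto(\langle f,M_{\beta l}T_{\alpha k}g\rangle)_{k,l}$ and the corresponding synthesis operator act continuously between $\mathcal{S}$ and
\[
\lambda_\omega:=\{c=(c_{k,l})_{k,l\in\Z}\,:\,\sup_{k,l}|c_{k,l}|\,e^{n(\omega(\alpha|k|)+\beta|l|)}<\infty,\ \forall n\in\N\},
\]
which yields $\mathcal{U}_\omega(\C)\cong\lambda_\omega$. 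The engine is an STFT characterisation of $\mathcal{S}$: $f\in\mathcal{S}$ if and only if $\sup_{x,\xi}|V_gf(x,\xi)|\,e^{\lambda(\omega(|x|)+|\xi|)}<\infty$ for all $\lambda>0$. The exponential weight in $\xi$ is stable under the convolution with a Gaussian that the STFT produces; for the $\omega$-weight in $x$ the same stability is where \eqref{general-alpha}, i.e.\ $\omega(2t)=O(\omega(t))$, enters, as it turns $\omega$ into a weight function in the Braun--Meise--Taylor sense and makes the requisite module and discretisation estimates available. This establishes part $(ii)$ modulo identifying $\lambda_\omega$.

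\smallskip
\emph{Step 3: identifying the power series space, and part $(i)$.} Enumerating $\Z^2$ by a single index $j$ via the nondecreasing rearrangement of the numbers $\omega(\alpha|k|)+\beta|l|$ turns the K\"othe matrix into $(e^{n\gamma_j})_{n,j}$, so $\lambda_\omega\cong\Lambda_\infty(\gamma)$; it then suffices to show $\gamma$ is $O$-equivalent to $(\omega^*(n))_n$. This reduces to the counting estimate $N(R):=\#\{(k,l):\omega(\alpha|k|)+\beta|l|\le R\}\asymp\int_0^R\omega^{-1}(v)\dv$, and since $\frac{R}{2}\,\omega^{-1}(R/2)\le\int_0^R\omega^{-1}(v)\dv\le R\,\omega^{-1}(R)$ and both of these bounds have inverse function $O$-equivalent to $\omega^*$ (again using \eqref{general-alpha} to control $\omega^*$ under dilation of its argument), one obtains $\gamma\asymp\omega^*$; this is the Fr\'echet counterpart of Langenbruch's diametral-dimension computation. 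For part $(i)$, Steps 1--2 go through under \eqref{general} alone but with a regularised weight in place of $\omega(|\cdot|)$ in the STFT characterisation, still producing a topological isomorphism of $\mathcal{U}_\omega(\C)$ onto a K\"othe space whose matrix is of power series type (linear in the seminorm index, with coercive weights), hence onto $\Lambda_\infty(\widetilde\gamma)$ for an exponent sequence $\widetilde\gamma$ that can no longer be computed explicitly.

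\smallskip
\emph{Main obstacle.} I expect Step 2 to be the crux: proving that the Gabor analysis and synthesis maps are continuous in \emph{both} directions between $\mathcal{S}$ (equivalently $\mathcal{U}_\omega(\C)$) and $\lambda_\omega$. This requires the STFT characterisation with precisely the right weights together with discretisation estimates for a non-compactly-supported analytic window, and it is here that the distinction between \eqref{general} and \eqref{general-alpha} becomes decisive — the stronger condition delivers the clean weights $e^{\lambda(\omega(|x|)+|\xi|)}$ and hence the explicit answer $\Lambda_\infty(\omega^*(n))$, while the weaker one only controls a regularised weight and so only yields that $\mathcal{U}_\omega(\C)$ is \emph{some} power series space of infinite type.
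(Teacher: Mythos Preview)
Your Step~2 contains a real gap. A Gabor frame with window $g$ and dual window $\gamma$ gives continuous analysis and synthesis operators $C:\mathcal{S}\to\lambda_\omega$ and $D:\lambda_\omega\to\mathcal{S}$ with $D\circ C=\operatorname{id}_{\mathcal{S}}$, but $C\circ D$ is only a \emph{projection} on $\lambda_\omega$: Gabor systems on a lattice with $\alpha\beta<1$ are overcomplete, so the range of $C$ is a proper closed complemented subspace. Thus the most your Step~2 can deliver is that $\mathcal{U}_\omega(\C)$ is isomorphic to a complemented subspace of $\lambda_\omega\cong\Lambda_\infty(\omega^*(n))$, not to $\lambda_\omega$ itself. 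The paper makes exactly this observation (with the continuous STFT in place of a discrete frame) in the remark after Proposition~\ref{STFT-test}.

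For part~$(i)$ this is repairable, but only by importing the paper's machinery: a complemented subspace of a nuclear $\Lambda_\infty(\gamma)$ inherits $(DN)$ and $(\Omega)$, and one then needs the Dronov--Kaplitskii theorem (the paper's Theorem~\ref{basis}) to conclude it is again a power series space of infinite type --- which is precisely the paper's argument for $(i)$. For part~$(ii)$ the gap is decisive. The complemented embedding yields only $\Lambda'_\infty(\omega^*(n))=\Delta(\lambda_\omega)\subseteq\Delta(\mathcal{U}_\omega(\C))$; your counting in Step~3 is essentially the rearrangement computation behind the paper's Proposition~\ref{lower-estimate}, and it gives this direction only. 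The reverse inclusion $\Delta(\mathcal{U}_\omega(\C))\subseteq\Lambda'_\infty(\omega^*(n))$ --- the substantive half --- is untouched by your frame construction. The paper obtains it by a quite different device (Proposition~\ref{upper-estimate}): it verifies an $(\Omega)$-condition for the inclusion triple $\mathcal{U}_\omega(\C)\hookrightarrow\mathcal{A}'_\omega(\R)$ via the STFT and then invokes Langenbruch's bound on the generalised diametral dimension of $\mathcal{A}'_\omega(\R)$. Without either this step or a complemented embedding of $\lambda_\omega$ back into $\mathcal{U}_\omega(\C)$ enabling a Pe\l czy\'nski decomposition, your route to $(ii)$ is incomplete.
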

Theorem \ref{main-intro}$(ii)$ implies  that the projective Gelfand-Shilov spaces $\Sigma^1_\nu$ and $\Sigma^\nu_1$, considered e.g.\ in \cite{ACT, C-G-P-R, DV, PPV}, are isomorphic to $\Lambda_{\infty}(n^{1/(\nu+1)})$ for $\nu > 0$ (see Theorem \ref{main-example}). 

We now briefly comment on the methods used in this article.  A recent deep result of Dronov and Kaplitskii  \cite{D-K} and classical facts about complemented subspaces of $s$ \cite{Bessaga, MV}  imply that every nuclear Fr\'echet space satisfying the linear topological invariants $(\Omega)$ and $(DN)$ \cite{MV} is isomorphic to some power series space of infinite type (see Theorem \ref{basis}).
Hence, to prove Theorem \ref{main}$(i)$, it suffices to show that $\mathcal{U}_{\omega}(\C)$ satisfies $(\Omega)$ and $(DN)$. This is done in Section \ref{sect-inv} by employing techniques similar to those used in  \cite{Langenbruch2012}.  We mention that our work considerably improves the main result from \cite[Section 4]{Kruse}, where the condition $(\Omega)$ for  $\mathcal{U}_{\omega}(\C)$ is shown under much more complicated conditions on $\omega$. We also show condition $(\Omega)$ 
for the Fr\'echet spaces
$$
\mathcal{U}_{\omega}(V_h) :=  \{ \varphi \in \mathcal{O}(V_h) \, | \, \sup_{z \in V_{k}} |\varphi(z)| e^{n\omega(|\operatorname{Re} z|)} < \infty, \quad \forall k <h,  n \in \N \}, \qquad h > 0.
$$
The latter result will be used by the author in a forthcoming paper on the Borel-Ritt problem \cite{D}.  Moreover, by \cite[Theorem 5]{Kruse},  it might be useful to study the surjectivity of the Cauchy-Riemann operator on certain weighted spaces of vector-valued smooth functions. The proof of Theorem \ref{main-intro}$(ii)$ is given in Section \ref{ssr}. By Theorem \ref{main-intro}$(i)$, it is enough to show that the diametral dimension of $\mathcal{U}_{\omega}(\C)$ is equal to $\Lambda'_\infty(\omega^*(n))$. This is achieved by combining a result from \cite{Langenbruch2016} on the (generalized) diametral dimension of a  certain space $\mathcal{A}'_\omega(\R)$ of Fourier hyperfunctions of fast decay with the mapping properties of the short-time Fourier transform  on $\mathcal{U}_{\omega}(\C)$ and $\mathcal{A}'_\omega(\R)$.  
 
We are much indebted to the work of Langenbruch. The methods and results from \cite[Section 2]{Langenbruch2012}  and \cite[Section 4]{Langenbruch2016}, which are essential for the present paper,  were truly inspiring to us. 
\section{Spaces of holomorphic functions with rapid decay on strips}
In this preliminary section we introduce the  weighted Fr\'echet spaces of holomorphic functions we shall be concerned with in  this article. 

By a \emph{weight function} we mean an increasing continuous function $\omega: [0,\infty) \rightarrow [0,\infty)$ with $\omega(0) = 0$ and $\log t = o(\omega(t))$. In particular, $\omega$ is bijective on $[0,\infty)$. We extend $\omega$ to $\R$ as the even function $\omega(x) = \omega(|x|)$, $x \in \R$.  

Let $h \in (0, \infty]$. A weight function $\omega$ is said to  satisfy condition $(A)_h$ if for all $\mu > \pi/(2h)$ there is $C >0$ such that
$$
\int^\infty_0\omega(t+s)  e^{-\mu s}ds \leq C(\omega(t) + 1), \qquad t \geq 0.
$$
\begin{lemma}\label{sup-bounds}
Let $h \in (0, \infty]$ and let $\omega$ be a weight function. Then, $\omega$ satisfies $(A)_h$ if and only if for all $\mu > \pi/(2h)$ there is $C  >0$ such that
\begin{equation}
\omega(t+s) \leq Ce^{\mu s}(\omega(t)+1), \qquad t,s \geq 0.
\label{pointwise}
\end{equation}
\end{lemma}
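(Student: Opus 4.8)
The plan is to prove both implications directly, exploiting that $\omega$ is increasing and bijective on $[0,\infty)$, and that the only subtle constant in $(A)_h$ is the threshold $\pi/(2h)$, which plays no role in either direction.

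First I would establish that \eqref{pointwise} implies $(A)_h$, which is the easy direction. Fix $\mu > \pi/(2h)$ and pick any $\mu'$ with $\pi/(2h) < \mu' < \mu$; apply \eqref{pointwise} with exponent $\mu'$ to get a constant $C'$ so that $\omega(t+s) \leq C' e^{\mu' s}(\omega(t)+1)$ for all $t,s \geq 0$. Then
\[
\int_0^\infty \omega(t+s)\, e^{-\mu s}\, ds \;\leq\; C'(\omega(t)+1)\int_0^\infty e^{(\mu'-\mu)s}\, ds \;=\; \frac{C'}{\mu - \mu'}\,(\omega(t)+1),
\]
which is $(A)_h$ with constant $C = C'/(\mu-\mu')$. (If $h = \infty$ the same argument works for every $\mu > 0$.)

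The converse is the direction where I expect the main (mild) obstacle: from an integral bound one must extract a pointwise bound, and the natural trick is to integrate $\omega(t+s)$ over a short window to its right and use monotonicity. Fix $\mu > \pi/(2h)$, choose $\mu'$ with $\pi/(2h) < \mu' < \mu$, and let $C'$ be the constant from $(A)_h$ for the exponent $\mu'$. For fixed $t,s \geq 0$ and $r \in [0,1]$ monotonicity gives $\omega(t+s) \leq \omega(t+s+r)$, hence
\[
\omega(t+s) \;=\; \frac{\omega(t+s)}{\int_0^1 e^{-\mu' r}\, dr}\int_0^1 e^{-\mu' r}\, dr \;\leq\; \frac{1}{1-e^{-\mu'}}\int_0^1 \omega(t+s+r)\, e^{-\mu' r}\, dr.
\]
Substituting $u = s + r$ and enlarging the domain of integration to $[0,\infty)$,
\[
\omega(t+s) \;\leq\; \frac{e^{\mu' s}}{1-e^{-\mu'}}\int_s^{s+1} \omega(t+u)\, e^{-\mu' u}\, du \;\leq\; \frac{e^{\mu' s}}{1-e^{-\mu'}}\int_0^{\infty} \omega(t+u)\, e^{-\mu' u}\, du \;\leq\; \frac{C'}{1-e^{-\mu'}}\, e^{\mu' s}(\omega(t)+1).
\]
Since $\mu' < \mu$ we have $e^{\mu' s} \leq e^{\mu s}$, so \eqref{pointwise} holds with $C = C'/(1-e^{-\mu'})$. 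This completes the proof; note that in both directions the room between $\mu'$ and $\mu$ is what lets us absorb the tail integral and the loss from enlarging the integration domain, so no delicate analysis of the constant $\pi/(2h)$ is needed.
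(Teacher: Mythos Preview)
Your argument is correct. There is a harmless slip in the constant: $\int_0^1 e^{-\mu' r}\,dr = (1-e^{-\mu'})/\mu'$, so the factor in front should be $\mu'/(1-e^{-\mu'})$ rather than $1/(1-e^{-\mu'})$; this of course does not affect the conclusion. Also, in the direction $(A)_h \Rightarrow$ \eqref{pointwise} the auxiliary exponent $\mu'<\mu$ is not actually needed: you may take $\mu'=\mu$ throughout and still obtain the pointwise bound with exponent $\mu$ directly.

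For comparison, the paper handles the nontrivial direction differently. It introduces the smoothed weight
\[
\omega_0(t)=\int_0^\infty \omega(t+s)\,e^{-\mu s}\,ds
\]
and observes that $\omega_0$ satisfies the \emph{exact} inequality $\omega_0(t+s)\le e^{\mu s}\omega_0(t)$ (by a change of variables), while monotonicity gives $\omega(t)\le \mu\,\omega_0(t)$ and $(A)_h$ gives $\omega_0(t)\le C(\omega(t)+1)$. Sandwiching $\omega$ by $\omega_0$ then yields \eqref{pointwise} immediately, with the same exponent $\mu$. Your windowing trick reaches the same endpoint by a local average over $[s,s+1]$ instead of the full half-line average; it is equally elementary, but the paper's $\omega_0$ argument has the small conceptual bonus of exhibiting a concrete function equivalent to $\omega$ that satisfies the submultiplicative-type bound exactly.
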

\begin{proof} The reverse implication is clear. We now show the direct one. Let $\mu > \pi/(2h)$ be arbitrary. Define
$$
\omega_0(t) = \int^\infty_0\omega(t+s)  e^{-\mu s}ds, \qquad t \geq 0.
$$
Note that $\omega_0(t+s) \leq e^{\mu s}\omega_0(t)$ for all $t,s \geq 0$.  Since $\omega$ is increasing, we have that
$$
\omega(t) = \mu \omega(t) \int_0^\infty e^{-\mu s} ds \leq \mu \omega_0(t) \qquad t \geq 0,
$$
whereas $(A)_h$ asserts that there is $C > 0$ such that $\omega_0(t) \leq C(\omega(t)+1)$ for all $t \geq 0$. Hence,
$$
\omega(t+s) \leq \mu \omega_0(t+s) \leq \mu e^{\mu s}\omega_0(t) \leq  \mu e^{\mu  s}(\omega(t) +1), \qquad  t,s \geq 0.
$$
\end{proof}
\begin{remark}\label{remark-ti}
Lemma \ref{sup-bounds} implies that $\omega$ satisfies \eqref{general}  if and only if $\omega$ satisfies $(A)_\infty$. Similarly, $\omega$ satisfies \eqref{translation-invariant} if and only if $\omega$ satisfies $(A)_h$ for some $h > 0$.
\end{remark}
Following \cite{BMT}, a weight function $\omega$ is said to satisfy condition $(\alpha)$ if $\omega$ satisfies \eqref{general-alpha}.
Clearly, $(\alpha)$ implies $(A)_\infty$. On the contrary, the weight functions $\omega(t) = e^{(\log t)^\nu}$, $\nu > 1$, satisfy $(A)_\infty$ but not $(\alpha)$.

We write $z = x +iy \in \C$ for a complex variable. For $h \in (0,\infty]$ we set
$$
V_h := \{ z \in \C \, | \,  |y| < h \}.
$$ 
Of course, $V_\infty = \C$. Given an open set $V \subseteq \C$, we denote by $\mathcal{O}(V)$ the space of holomorphic functions in $V$.  Let $\omega$ be a weight function.  For $k > 0$ and $\lambda \in \R$ we define $\mathcal{A}_{\omega, \lambda}(V_k)$ as  the Banach space consisting of all $\varphi \in \mathcal{O}(V_k)$ such that
$$
\| \varphi \|_{k,\lambda}:= \sup_{z \in V_k} |\varphi(z)| e^{\lambda\omega(x)} < \infty.
$$
For $h \in (0,\infty]$ we define the Fr\'echet space
$$
\mathcal{U}_{\omega}(V_h) := \varprojlim_{\lambda \to \infty} \varprojlim_{k \to h^-} \mathcal{A}_{\omega,\lambda} (V_k).
$$
For $h = \infty$ we write $ \mathcal{U}_{\omega}(\C) = \mathcal{U}_{\omega}(V_\infty)$. We shall be primarily interested in this space. The condition $\log t = o(\omega(t))$ implies that ${U}_{\omega}(\C)$ is nuclear. 

\begin{remark}
In \cite[Theorem 7.2]{DV} it is shown that the space $\mathcal{U}_{\omega}(\C)$ ($\mathcal{H}_{\omega}(\R)$) is non-trivial if and only if 
$$
\int_{0}^\infty \omega(t)e^{-\mu t} dt < \infty
$$
for all $\mu > 0$ (for some $\mu > 0$). By Remark \eqref{remark-ti}, the conditions \eqref{translation-invariant} and \eqref{general} (= $(A)_\infty$) may be seen as regular versions of these non-triviality conditions (cf.\ standard non-quasianalyticity versus strong non-quasianalyticity). 
\end{remark}

\section{The conditions $(\Omega)$ and $(DN)$}\label{sect-inv}
In this section we discuss the conditions $(\Omega)$ and $(DN)$ for the Fr\'echet spaces  $\mathcal{U}_{\omega}(\C)$ and prove Theorem \ref{main-intro}$(i)$.
\subsection{The condition $(\Omega)$}  A Fr\'echet space with a fundamental decreasing sequence $(U_n)_{n \in \N}$ of neighbourhoods of zero  is said to satisfy $(\Omega)$ \cite{MV}  if
$$
\forall n \in \N \, \exists m \geq n \, \forall k \geq m \, \exists C,L > 0 \,\forall r > 0 \, :  \, U_m \subseteq  e^{-r}U_n + Ce^{Lr} U_k.
$$
\begin{proposition}\label{Omega}
Let $h \in (0,\infty]$ and let $\omega$ be a weight function satisfying $(A)_h$. Then, $\mathcal{U}_\omega(V_h)$ satisfies $(\Omega)$.
\end{proposition}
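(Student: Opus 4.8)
The plan is to verify the defining inequality of $(\Omega)$ directly for the fundamental system of seminorms $\|\cdot\|_{k,\lambda}$ (equivalently, for the corresponding unit balls $U_{k,\lambda}$), using the characterization of $(A)_h$ from Lemma \ref{sup-bounds} in the pointwise form \eqref{pointwise}. The key analytic input is a Phragmén--Lindelöf / convexity argument: if $\varphi \in \mathcal{O}(V_k)$ decays at rate $e^{-\lambda\omega(x)}$ on a strip of width $k$ and simultaneously at the coarser rate $e^{-n\omega(x)}$ on a wider strip, then on an intermediate strip it must decay at an intermediate rate, with the key point being that one can trade width of strip for logarithmic gain in the decay exponent. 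Concretely, I would fix $\lambda$ (playing the role of $n$), choose $\mu$ slightly larger than $\pi/(2h)$, and produce, for each larger $\nu$ and each $r>0$, a splitting $\varphi = \varphi_1 + \varphi_2$ with $\|\varphi_1\|_{k',\lambda} \leq e^{-r}$ and $\|\varphi_2\|_{k'',\nu} \leq Ce^{Lr}$, where $k' < k'' < h$ are suitable intermediate strip widths.

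The mechanism I expect to use is the one from \cite[Section 2]{Langenbruch2012}: write $\varphi$ via a Cauchy-type integral or multiply by an entire (or holomorphic-on-a-strip) multiplier that localizes in the real direction. More precisely, one constructs a holomorphic function $g_r$ on a strip, depending on the parameter $r>0$, with $|g_r(z)|$ comparable to $1$ where $\omega(x)$ is large (say $\omega(x) \geq $ some threshold $R_r$ increasing in $r$) and $|g_r(z)|$ exponentially small elsewhere, with precise control of the form $|g_r(z)| \leq Ce^{\mu|y| \cdot (\text{something})}$ coming from an explicit exponential-type function such as $z \mapsto \exp(-\varepsilon_r \cosh(cz))$ or an integral $\int e^{-izt}\,d\rho_r(t)$ against a suitable measure. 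Then $\varphi_1 := \varphi g_r$ inherits decay $e^{-r}e^{-\lambda\omega(x)}$ on the thinner strip (the $e^{-r}$ coming from the region where $g_r$ is small, combined with $\varphi$'s decay on the wide strip), while $\varphi_2 := \varphi(1-g_r)$ is supported (in the decay sense) on the region $\omega(x) \leq R_r$, where the extra decay $e^{-\nu\omega(x)}$ costs only $e^{\nu R_r}$, and one checks $R_r \leq Lr + \text{const}$ by tracking the construction; here is exactly where condition \eqref{pointwise} is invoked, to pass the decay estimate from the argument of $g_r$ (shifted by the strip width) back to $\omega(x)$ itself with only a multiplicative loss of the form $e^{\mu s}$.

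The main obstacle, and the place requiring genuine care, is the simultaneous control in the two parameters: the strip width must shrink from $k''$ down to $k'$ in passing from $\varphi_2$ to $\varphi_1$, because the multiplier $g_r$ is only holomorphic and bounded on a slightly smaller strip than where $\varphi$ lives, and the exponential growth of $g_r$ in the imaginary direction (unavoidable, since a function that is $\approx 1$ on a long real interval and small outside must grow in $y$) must be absorbed by the loss of strip width — this is precisely why the threshold $\pi/(2h)$ appears and why $(A)_h$ is the right hypothesis. I would handle this by choosing the parameters in the order: first $n$ (i.e.\ $\lambda$), then $m$ corresponding to a width $k$ with $\pi/(2k) < \mu$ for the $\mu$ furnished by $(A)_h$ applied with a margin, then for each larger $k$-index an intermediate width, then $C, L$ depending only on these choices (not on $r$), and finally verify the inclusion for all $r > 0$ by the two estimates above. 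The nuclearity and Fréchet structure are already recorded in the excerpt, so no separate verification of those is needed; what remains is purely the quantitative splitting just described, which is routine once the multiplier $g_r$ is in hand but whose construction is the technical heart of the argument.
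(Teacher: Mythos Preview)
Your proposal captures one of the two key mechanisms in the paper's proof --- the holomorphic cutoff multipliers $g_r$ (these are the functions $E^h_{r,A}$ of Proposition~\ref{cut-off}) --- but it misses the other, and the gap is genuine. The $(\Omega)$ condition asks you to split $\varphi$, controlled only in the \emph{middle} norm $\|\cdot\|_{k_1,\lambda_1}$, as $\psi_0+\psi_2$ with $\psi_2$ controlled in the \emph{stronger} norm $\|\cdot\|_{k_2,\lambda_2}$, where $k_2>k_1$. A purely multiplicative splitting $\varphi=\varphi g_r+\varphi(1-g_r)$ cannot do this: the piece $\varphi(1-g_r)$ is holomorphic only on the strip where $\varphi$ already is, and even if you work inside the Fr\'echet space so that $\varphi$ is holomorphic on all of $V_h$, you have no uniform bound on $|\varphi|$ over $V_{k_2}$ from the hypothesis $\|\varphi\|_{k_1,\lambda_1}\le 1$ alone. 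Your estimate of $\|\varphi_2\|_{k_2,\lambda_2}$ in the region $\omega(x)\le R_r$ would involve $\sup_{V_{k_2}}|\varphi|$, which is uncontrolled.

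The paper handles the strip-width increase by a separate, non-multiplicative step: H\"ormander's weighted $\overline\partial$-estimates (Lemma~\ref{Hormander}) give a preliminary decomposition $\varphi=\varphi^r_0+\varphi^r_2$ with $\varphi^r_2\in\mathcal O(V_{k_2})$ and $\|\varphi^r_2\|_{k_2,\lambda_0}\le Ce^{Lr}$ --- note the \emph{weak} weight $\lambda_0$ here. Only after this does one apply the holomorphic cutoffs, setting $\psi^r_2=E_{r,A_r}\varphi^r_2$, to upgrade the weight exponent from $\lambda_0$ to the target $\lambda_2$; this second step is where $(A)_h$ enters, exactly as you describe. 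The Phragm\'en--Lindel\"of interpolation you mention goes the wrong way for this purpose (it improves control on a \emph{narrower} strip from two wider ones) and is precisely what the paper uses for $(DN)$ in Lemma~\ref{three-lines}, not for $(\Omega)$. Finally, because the $\overline\partial$ step only produces pieces in the ambient Banach spaces, not in $\mathcal U_\omega(V_h)$ itself, the paper also needs the abstract reduction Lemma~\ref{omega-loc} to pass from the local inclusion to the Fr\'echet-space $(\Omega)$ condition.
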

Proposition \ref{Omega} will be shown by suitably modifying the proof of  \cite[Theorem 2.2]{Langenbruch2012}. We start with the construction of holomorphic cut-off functions  \cite[p.\ 226]{Langenbruch2012}.  For $r > 0$ we define
$$
H_r(z) := \frac{1}{D_r} \int_{\gamma_z} \cosh \xi e^{-r\cosh \xi} d\xi, \qquad z \in V_{\pi/2},
$$
where $D_r = \int_{-\infty}^\infty \cosh \xi e^{-r\cosh \xi} d \xi$ and $\gamma_z$ is a path in $V_{\pi/2}$ from $-\infty$ to $z$. Let $h > 0$. For $r, A > 0$ we define
$$
E^h_{r,A}(z) := H_r\left(\frac{\pi}{2h}\left(A+z \right)\right)H_r\left(\frac{\pi}{2h}\left(A-z \right)\right),  \qquad z \in V_{h}.
$$
\begin{proposition}\label{cut-off} (cf.\ \cite[Lemma 2.3]{Langenbruch2012})
Let $h > 0$. The functions $E^h_{r,A}$, $r,A > 0$, belong to $\mathcal{O}(V_{h})$ and satisfy the following properties: For all  $0 < k <h$ there are $B,C,L,l >0$ such that for all $r ,A > 0$
\begin{gather}
|E^h_{r,A}(z)| \leq Ce^{Lr}, \qquad z \in V_k. \label{P1}\\ 
|E^h_{r,A}(z)| \leq  Ce^{-lre^{\frac{\pi}{2h}(|x | -  A)}}, \qquad z \in V_k,  |x| \geq  A + B. \label{P2} \\
|1- E^h_{r,A}(z)| \leq  Ce^{-lre^{\frac{\pi}{2h}(A - |x|)}}, \qquad z \in V_k, |x| \leq  A - B. \label{P3} 
\end{gather}
\end{proposition}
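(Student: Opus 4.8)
The plan is to build everything from estimates on the single-variable building block $H_r$ and then tensor-multiply. First I would establish the basic properties of $H_r$ on the strip $V_{\pi/2}$: the integrand $\cosh\xi\, e^{-r\cosh\xi}$ is the derivative of $-\tfrac1r e^{-r\cosh\xi}$ along suitable paths, so $H_r$ is well defined (path-independent, holomorphic), $H_r(-\infty)=0$, $H_r(+\infty)=1$, and more precisely $H_r(z) = \tfrac{1}{rD_r}\bigl(e^{-r\cosh(-\infty)} - e^{-r\cosh z}\bigr)$-type bookkeeping gives $H_r(z) = 1 - \tfrac{1}{rD_r}e^{-r\cosh z} + (\text{boundary term at }-\infty)$ when $\operatorname{Re} z$ is large, and $H_r(z) = \tfrac{1}{rD_r}e^{-r\cosh z} + \cdots$ when $\operatorname{Re} z$ is very negative. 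The key quantitative point is the asymptotics of the normalising constant $D_r = \int_{-\infty}^\infty \cosh\xi\, e^{-r\cosh\xi}\,d\xi$ as $r\to 0^+$ and $r\to\infty$: a Laplace/Watson analysis (or just $D_r \asymp$ elementary bounds via the substitution $u=\cosh\xi$) shows $1/(rD_r)$ grows at most exponentially in $r$, say $1/(rD_r) \le C e^{Lr}$, and this is the only place where a genuinely non-trivial estimate enters. From this, on the closed substrip $\{|\operatorname{Im} z|\le \theta\}$ with $\theta<\pi/2$ one gets: $|H_r(z)| \le C e^{Lr}$ everywhere; $|H_r(z)| \le C e^{Lr} e^{-cr e^{\operatorname{Re} z}}$ for $\operatorname{Re} z$ large negative (using $\operatorname{Re}(\cosh z) = \cosh(\operatorname{Re} z)\cos(\operatorname{Im} z) \ge \cos\theta\,\cosh(\operatorname{Re} z) \gtrsim e^{|\operatorname{Re} z|}$); and $|1-H_r(z)| \le C e^{Lr} e^{-cr e^{\operatorname{Re} z}}$ for $\operatorname{Re} z$ large positive. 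Absorbing the $e^{Lr}$ prefactor into the exponential — by shrinking $c$ slightly and paying a fixed additive constant $B$ in the shift of $\operatorname{Re} z$, since $e^{Lr} \le e^{\frac{c}{2}r e^{\operatorname{Re} z}}$ once $e^{\operatorname{Re} z} \ge 2L/c$ — yields clean bounds of the form $|H_r(z)| \le C e^{-c r e^{\operatorname{Re} z}}$ and $|1-H_r(z)| \le C e^{-c r e^{-\operatorname{Re} z}}$ valid past a fixed threshold.

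Next I would feed the scaled and translated arguments $\tfrac{\pi}{2h}(A+z)$ and $\tfrac{\pi}{2h}(A-z)$ into these estimates. For $z = x+iy \in V_k$ with $k<h$, the imaginary part of each argument is $\tfrac{\pi}{2h}y$, whose absolute value is at most $\tfrac{\pi k}{2h} < \tfrac\pi2$, so the substrip hypothesis is met with $\theta = \tfrac{\pi k}{2h}$; the real parts are $\tfrac{\pi}{2h}(A+x)$ and $\tfrac{\pi}{2h}(A-x)$. Property \eqref{P1} is then immediate: $|E^h_{r,A}(z)| = |H_r(\cdots)|\,|H_r(\cdots)| \le (Ce^{Lr})^2 = C'e^{L'r}$, with constants depending only on $k,h$. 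For \eqref{P2}, when $|x| \ge A+B$ one of the two arguments has real part $\le -\tfrac{\pi}{2h}B < 0$ going to large negative values, so the corresponding factor is bounded by $C e^{-l r e^{\frac{\pi}{2h}(|x|-A)}}$ (this is exactly $e^{|\operatorname{Re}(\text{arg})|}$ rewritten), while the other factor is controlled by $C e^{Lr}$; choosing $B$ large enough (depending on $k,h,L,l$) lets the decaying factor swallow $e^{Lr}$ as above. For \eqref{P3}, when $|x| \le A - B$ both real parts $\tfrac{\pi}{2h}(A\pm x)$ are $\ge \tfrac{\pi}{2h}B > 0$, so both factors are close to $1$: writing $1 - E^h_{r,A} = (1-H_r(\cdot))H_r(\cdot) + (1 - H_r(\cdot))$ (splitting the product) and applying the $|1-H_r| \le Ce^{-lre^{\operatorname{Re}}}$ bound to each, together with $|H_r|\le Ce^{Lr}$, and again absorbing prefactors, gives $|1-E^h_{r,A}(z)| \le C e^{-l r e^{\frac{\pi}{2h}(A-|x|)}}$; note $\min(A+x, A-x) = A - |x|$ is what governs the slower-decaying of the two terms.

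I expect the main obstacle to be the quantitative control of the normalising constant $D_r$, i.e.\ proving $1/(rD_r) \le Ce^{Lr}$ uniformly in $r>0$ — one must handle $r\to 0^+$ (where $D_r \to \int \operatorname{sech}$-type behaviour, actually $D_r \to$ a divergent integral, so a careful look at $rD_r \to$ a positive constant is needed) and $r\to\infty$ (where $D_r$ decays, and one needs the decay to be no faster than exponential, which follows from restricting the integral to $|\xi|\le 1$ where $\cosh\xi \le e$, giving $D_r \ge \int_{-1}^1 \cosh\xi\, e^{-r\cosh\xi}\,d\xi \ge c e^{-er}$). Since this is precisely the content of \cite[Lemma 2.3]{Langenbruch2012} in the case $h = \pi/2$, I would simply cite that lemma for the properties of $H_r$ and then carry out the routine scaling/translation/product bookkeeping of the two preceding paragraphs to transfer the estimates to $E^h_{r,A}$ on $V_h$; the only real work beyond the citation is tracking how the constants $B, C, L, l$ depend on $k$ and $h$ and verifying the exponent in \eqref{P2}--\eqref{P3} is exactly $e^{\frac{\pi}{2h}(|x|-A)}$ resp.\ $e^{\frac{\pi}{2h}(A-|x|)}$, which comes out of $\operatorname{Re}(\cosh w) \asymp e^{|\operatorname{Re} w|}$ with $w = \tfrac{\pi}{2h}(A\pm z)$.
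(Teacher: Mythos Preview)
Your approach is essentially the paper's: establish the three basic estimates for $H_r$ on closed substrips of $V_{\pi/2}$, then scale and multiply. Two corrections are worth flagging. First, $\cosh\xi\,e^{-r\cosh\xi}$ is \emph{not} the derivative of $-\tfrac{1}{r}e^{-r\cosh\xi}$ (that derivative is $\sinh\xi\,e^{-r\cosh\xi}$), so your closed-form heuristic for $H_r$ is unfounded; the paper bounds the tail integral directly, using $\cosh\xi \le 2\sinh\xi$ for $\xi\ge 1$ to reach an exact antiderivative. Second, and relatedly, the paper gets the $D_r$ bound in one stroke via $D_r \ge 2\int_0^\infty \sinh\xi\,e^{-r\cosh\xi}\,d\xi = 2/(re^r)$, which covers all $r>0$ without your small-$r$/large-$r$ split. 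One simplification you miss: the exact identity $1-H_r(z)=H_r(-z)$ (from $\int_{-\infty}^\infty=\int_{-\infty}^z+\int_z^\infty$ and the evenness of $\cosh$) reduces the $|1-H_r|$ estimate to the already-proven decay of $|H_r|$, and immediately yields your splitting $1-E^h_{r,A}(z) = H_r\bigl(\tfrac{\pi}{2h}(-A-z)\bigr)H_r\bigl(\tfrac{\pi}{2h}(A-z)\bigr)+H_r\bigl(\tfrac{\pi}{2h}(z-A)\bigr)$.
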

\begin{proof}
It suffices to consider the case $h = \pi/2$. The above properties follow from an inspection of the proof of \cite[Lemma 2.3]{Langenbruch2012} (and replacing $V_1$ by $V_{\pi/2}$ there) but we repeat the argument here for the sake of completeness. We claim that the functions $H_{r}$, $r> 0$, belong to $\mathcal{O}(V_{\pi/2})$ and satisfy the following properties: For all  $0 < k <\pi/2$ there are $B_0,C_0,L_0,l_0 >0$ such that for all $r > 0$
\begin{gather}
|H_r(z)| \leq C_0e^{L_0r}, \qquad z \in V_k. \label{P11}\\ 
|H_r(z)| \leq C_0e^{-l_0re^{|x|}}, \qquad z \in V_k, x \leq - B_0. \label{P22} 
\end{gather}
Furthermore, it holds that
\begin{equation}
  \label{P33}
1- H_r(z) = H_r(-z) \qquad z \in V_{\pi/2}. 
\end{equation}
Before we prove these claims, let us show how they entail the result. Property \eqref{P1} follows directly from \eqref{P11}, while \eqref{P2} is a consequence of  \eqref{P11} and \eqref{P22}. By \eqref{P33},  it holds that
$$
1- E^{\pi/2}_{r,A}(z) = H_r(-A-z)H_r(A-z) + H_r(z-A), \qquad z \in V_{\pi/2}.
$$
Property \eqref{P3} therefore also follows from  \eqref{P11} and \eqref{P22}. We now prove the claims. Since
$$
\cosh(x+iy) = \cosh x \cos y + i \sinh x \sin y, \qquad x,y \in \R,
$$
we have that 
$$
|e^{\cosh(x+iy)}| = e^{\cosh x \cos y}, \qquad x,y \in \R, 
$$
and
$$
| \cosh(x+iy)|^2 = \cosh^2x - \sin^2y, \qquad x,y \in \R.
$$
The latter equality yields that 
$$
| \cosh(x+iy)| \leq |\cosh x| , \qquad x,y \in \R.
$$
Hence, the integral defining $H_r$ is convergent on $V_{\pi/2}$. Consequently, $H_r$ is well-defined (by Cauchy's integral theorem) and holomorphic on $V_{\pi/2}$. Note hat
$$
D_r = 2 \int_0^\infty \cosh \xi e^{-r\cosh \xi} d\xi \geq 2 \int_0^\infty \sinh \xi e^{-r\cosh \xi} d\xi  = \frac{2}{re^r}.
$$
Let  $0 < k <\pi/2$ be arbitrary.  For $z  \in V_k$ we have that
\begin{align*}
|H_r(z)| &= \frac{1}{D_r} \left | \int_{-\infty}^x \cosh(\xi+iy) e^{-r\cosh(\xi+iy)} d\xi \right | \leq  \frac{1}{D_r} \int_{-\infty}^\infty \cosh\xi e^{-r\cosh \xi \cos k} d\xi   \\
&\leq  \frac{1}{D_r} \int_{|\xi| \leq 1} \cosh\xi e^{-r\cosh \xi \cos k} d\xi + \frac{2}{D_r} \int_{|\xi| \geq 1}  \sinh\xi e^{-r\cosh \xi \cos k} d\xi \\
&\leq \left( 1 + \frac{4e^{-r\cosh 1}}{D_r r\cos k}\right) e^{r\cosh 1(1-\cos k)} \leq \left( 1 + \frac{2}{\cos k}\right) e^{r\cosh 1(1-\cos k)}.
\end{align*}
This shows $\eqref{P11}$ for suitable $C_0,L_0$.  For $z \in V_k$ with $x < -1$ we have that
\begin{align*}
|H_r(z)| &= \frac{1}{D_r} \left | \int_{-\infty}^x \cosh(\xi+iy) e^{-r\cosh(\xi+iy)} d\xi \right | = \frac{1}{D_r} \left | \int_{|x|}^\infty \cosh(\xi-iy) e^{-r\cosh(\xi-iy)} d\xi \right | \\
& \leq  \frac{1}{D_r}  \int_{|x|}^\infty \cosh \xi e^{-r\cosh \xi \cos k} d\xi  
\leq \frac{2}{D_r}  \int_{|x|}^\infty \sinh \xi e^{-r\cosh \xi \cos k} d\xi  \\
&= \frac{2}{D_r r \cos k} e^{-r\cosh|x| \cos k}  
\leq  \frac{1}{ \cos k} e^{-r(\cosh|x|\cos k  - 1)}.
\end{align*} 
Since
$$
\cosh|x|\cos k  - 1 \geq \frac{\cos k  }{2} e^{|x|} - 1 \geq \frac{\cos k}{4}e^{|x|}, \qquad |x| \geq \log \left ( \frac{4}{\cos k} \right),
$$
we obtain that
$$
|H_r(z)| \leq \frac{1}{\cos k} e^{- \frac{\cos k}{4} re^{|x|}}, \qquad z \in V_k, x < - \log \left ( \frac{4}{\cos k} \right).
$$
This shows \eqref{P22} for suitable $B_0,C_0, l_0$. Finally, we show \eqref{P33}.  By Cauchy's integral formula, we have that 
\begin{align*}
1- H_r(z) &= \frac{1}{D_r} \int_{-\infty}^\infty \cosh \xi e^{-r\cosh \xi} d\xi - \frac{1}{D_r} \int_{-\infty}^x \cosh (\xi+iy) e^{-r\cosh (\xi+iy)} d\xi \\
&= \frac{1}{D_r} \int_{x}^\infty \cosh (\xi+iy) e^{-r\cosh (\xi+iy)} d\xi \\
&=   \frac{1}{D_r} \int_{-\infty}^{-x} \cosh (\xi-iy) e^{-r\cosh (\xi-iy)} d\xi = H_r(-z), \qquad  z \in V_{\pi/2}.
\end{align*}
\end{proof}
Next, we show a decomposition result for holomorphic functions on strips.
\begin{proposition} \label{decomp}(cf.\ \cite[Corollary 2.6]{Langenbruch2012})
Let $h \in (0,\infty]$ and let $\omega$ be a weight function satisfying $(A)_h$. Let $0 < k_0 < k_1 < k_2 < h$. There are $K,L > 0$ such that for all $\lambda >0$ there is $C > 0$ such that the following property holds:   
 For all  $\varphi \in \mathcal{A}_{\omega,K\lambda}(V_{k_1})$ with $ \| \varphi \|_{k_1,K\lambda} \leq 1$  and $r > 0$ there are $\varphi^r_0 \in \mathcal{A}_{\omega,\lambda}(V_{k_0})$ and $\varphi^r_2 \in \mathcal{A}_{\omega,\lambda}(V_{k_2})$ with $\varphi = \varphi^r_0 + \varphi^r_2$  such that
$$
 \| \varphi^r_0 \|_{k_0,\lambda} \leq e^{-r}\qquad \mbox{and} \qquad
\| \varphi^r_2 \|_{k_2,\lambda} \ \leq Ce^{Lr}.
$$
\end{proposition}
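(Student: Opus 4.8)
The plan is to build $\varphi^r_2$ by combining two regularizations of $\varphi$: a holomorphic cut-off $E^{k_2'}_{r,A}$ in the real direction, which restores decay in $\operatorname{Re} z$, and a Paley--Wiener type band-limitation, which produces an \emph{entire} function and hence one holomorphic on the larger strip $V_{k_2}$. Fix once and for all $k_2'\in(k_2,h)$ (any finite $k_2'>k_2$ when $h=\infty$), fix $K>1$, say $K=2$, and let $B,C_0,L_0,l_0>0$ be the constants furnished by Proposition \ref{cut-off} for $E:=E^{k_2'}_{r,A}$ with respect to $k=k_2$; enlarging $B$ if needed we may assume $l_0e^{(\pi/2k_2')B}\ge 2$. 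Given $\varphi\in\mathcal A_{\omega,K\lambda}(V_{k_1})$ with $\|\varphi\|_{k_1,K\lambda}\le 1$, the bound $|\varphi(x+iy)|\le e^{-K\lambda\omega(x)}$ shows that each $\varphi(\cdot+iy)$, $|y|<k_1$, lies in $L^1(\R)$ (recall $\log t=o(\omega(t))$ forces $\int_\R e^{-\mu\omega(t)}\dt<\infty$ for all $\mu>0$), so standard Paley--Wiener arguments — shifting contours, using the decay of $\varphi$ along horizontal lines — give $\varphi(z)=\int_\R\widehat\varphi(\xi)e^{iz\xi}\dxi$ on $V_{k_1}$ with $|\widehat\varphi(\xi)|\le C_\lambda e^{-k_1|\xi|}$ for some $C_\lambda>0$. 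For $R>0$ I would split $\varphi=\varphi^{\le R}+\varphi^{>R}$ on $V_{k_1}$, where $\varphi^{\le R}(z):=\int_{|\xi|\le R}\widehat\varphi(\xi)e^{iz\xi}\dxi$ is entire and $\varphi^{>R}(z):=\int_{|\xi|>R}\widehat\varphi(\xi)e^{iz\xi}\dxi\in\mathcal O(V_{k_1})$; one has $|\varphi^{\le R}(z)|\le C_\lambda e^{(k_2-k_1)R}$ on $V_{k_2}$ and $|\varphi^{>R}(z)|\le C_\lambda e^{-(k_1-k_0)R}$ on $V_{k_0}$.

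Next I would set $\varphi^r_2:=E\,\varphi^{\le R}$, which belongs to $\mathcal O(V_{k_2'})\subseteq\mathcal O(V_{k_2})$ since $E\in\mathcal O(V_{k_2'})$ and $\varphi^{\le R}$ is entire, and $\varphi^r_0:=\varphi-\varphi^r_2$. On $V_{k_1}$ one has $\varphi^{\le R}=\varphi-\varphi^{>R}$, hence $\varphi^r_0=(1-E)\varphi+E\,\varphi^{>R}\in\mathcal O(V_{k_1})$; in particular $\varphi=\varphi^r_0+\varphi^r_2$ on $V_{k_0}$. It then remains to choose $A=A(r,\lambda)$ and $R=R(r,\lambda)$ so that $\|\varphi^r_0\|_{k_0,\lambda}\le e^{-r}$ and $\|\varphi^r_2\|_{k_2,\lambda}\le C_\lambda e^{Lr}$ with $K$ and $L$ independent of $\lambda$.

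For the piece $(1-E)\varphi$ I would use \eqref{P3} on $\{|x|\le A-B\}$, where $|1-E|$ is super-exponentially small, and \eqref{P1} together with $|\varphi(z)|e^{\lambda\omega(x)}\le e^{-(K-1)\lambda\omega(A-B)}$ on $\{|x|\ge A-B\}$ (here $K>1$ is essential); choosing $A$ so that $\omega(A-B)$ has order $r/\lambda$ then makes $\|(1-E)\varphi\|_{k_0,\lambda}\le\frac{1}{2}e^{-r}$. The terms $E\,\varphi^{>R}$ and $\varphi^r_2=E\,\varphi^{\le R}$ carry no decay in $\operatorname{Re} z$ of their own, so on $\{|x|\le A+B\}$ I would bound $|E|$ by \eqref{P1} and $e^{\lambda\omega(x)}$ by $e^{\lambda\omega(A+B)}$, the latter being $O(e^{Cr})$ because \eqref{pointwise} — which by Lemma \ref{sup-bounds} is equivalent to the hypothesis $(A)_h$ — controls $\omega(A+B)$ in terms of $\omega(A-B)$ up to a constant. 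Combined with the factor $e^{-(k_1-k_0)R}$ (resp.\ $e^{(k_2-k_1)R}$) and a choice of $R$ of order $r$, this disposes of the region $\{|x|\le A+B\}$.

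The hard part will be the region $\{|x|\ge A+B\}$, where one must absorb the full weight $e^{\lambda\omega(x)}$ into the super-exponential decay $|E(z)|\le C_0e^{-l_0 r\exp((\pi/2k_2')(|x|-A))}$ of \eqref{P2}. The key idea is to apply \eqref{pointwise} with an exponent $\mu\in(\pi/(2h),\pi/(2k_2'))$ — a non-empty interval precisely because $k_2'<h$ — to get $\lambda\omega(x)\le\lambda C(\omega(A)+1)e^{\mu(|x|-A)}$; since $\mu<\pi/(2k_2')$, the function $s\mapsto\lambda C(\omega(A)+1)e^{\mu s}-l_0 re^{(\pi/2k_2')s}$ attains a finite maximum equal, up to constants, to $a^{1/(1-\theta)}r^{-\theta/(1-\theta)}$ with $\theta=2k_2'\mu/\pi\in(0,1)$ and $a=\lambda C(\omega(A)+1)$, and since the chosen $A$ forces $a=O(r+\lambda)$ this maximum is $O(r)$ for $r$ large, with implied constant independent of $\lambda$. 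Hence $e^{\lambda\omega(x)}|E(z)|\le C_\lambda e^{L'r}$ on $\{|x|\ge A+B\}\cap V_{k_2}$, which completes both estimates. Apart from keeping track of which constants may depend on $\lambda$ and a routine reduction to $r$ large, this matching of the cut-off rate $\pi/(2k_2')$ against the growth of $\omega$ — possible only under $(A)_h$ — is the real substance of the argument; the remaining computations parallel those in \cite{Langenbruch2012}.
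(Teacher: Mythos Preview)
Your argument is correct and gives a genuinely different proof from the paper's. The paper obtains Proposition~\ref{decomp} by applying H\"ormander's weighted $\overline{\partial}$-lemma (Lemma~\ref{Hormander}) with the subharmonic weight $\psi=-\lambda U$, where $U$ is the harmonic majorant of $\omega$ on $V_{k_2}$ built in Lemma~\ref{poisson} via the Poisson kernel of the strip; the resulting $L^2$-decomposition is then converted into a sup-norm decomposition by Lemma~\ref{L-2}. The holomorphic cut-offs of Proposition~\ref{cut-off} are not used at this stage --- the paper reserves them for the subsequent proof of Proposition~\ref{Omega}.

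Your route replaces the $\overline{\partial}$-machinery entirely: the extension to the wider strip $V_{k_2}$ is produced by the Paley--Wiener band-limitation $\varphi^{\le R}$, and the weighted decay is restored by multiplying with $E^{k_2'}_{r,A}$. This is more elementary --- no $L^2$-theory, no harmonic extension, no $\overline{\partial}$-solution --- and it has the pleasant feature of using the cut-off functions of Proposition~\ref{cut-off} already here, so that the two main ingredients of the paper's proof of Proposition~\ref{Omega} (namely Propositions~\ref{cut-off} and~\ref{decomp}) are no longer independent. The trade-off is that your argument is more computational: the matching of the exponential rate $\pi/(2k_2')$ of $E$ against the growth rate $\mu>\pi/(2h)$ permitted by $(A)_h$ via the maximization of $ae^{\mu s}-l_0re^{(\pi/2k_2')s}$ is exactly the kind of bookkeeping that the $\overline{\partial}$-approach hides inside Lemma~\ref{Hormander}. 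Two small corrections: contour shifting only gives $|\widehat\varphi(\xi)|\le C_{\lambda,k}e^{-k|\xi|}$ for each fixed $k<k_1$, not $k=k_1$, so you should fix an auxiliary $k\in(k_0,k_1)$ and use $(k-k_0)$ and $(k_2-k)$ in place of $(k_1-k_0)$ and $(k_2-k_1)$; and the reduction to ``$r$ large'' (say $r\ge r_0(\lambda)$) should be made explicit by re-using the decomposition at $r_0$ for smaller $r$ and absorbing $e^{Lr_0}$ into the $\lambda$-dependent constant $C$.
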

The proof of Proposition \ref{decomp} is based on the following consequence of H\"ormander's solution to the weighted $\overline{\partial}$-problem. 
\begin{lemma} \label{Hormander} \cite[Lemma 2.5]{Langenbruch2012} Let $0 < k_0 < k_1 < k_2$ and let $L > k_2/(k_1-k_0)$. There is $C > 0$ such that for all subharmonic functions $\psi : V_{k_2} \rightarrow \R$ the following property holds:  For all $\varphi \in \mathcal{O}(V_{k_1})$ with
$$
\left(\int_{V_{k_1}} |\varphi(z)|^2 e^{-2\psi(z)} dz \right)^{1/2} \leq 1
$$
and  $r > 0$ there are $\varphi^r_0 \in \mathcal{O}(V_{k_0})$ and $\varphi^r_2 \in \mathcal{O}(V_{k_2})$ with $\varphi = \varphi^r_0 + \varphi^r_2$  such that
$$
\left(\int_{V_{k_0}} \frac{|\varphi^r_0(z)|^2 e^{-2\psi(z)}}{(1+|z|^2)^2} dz \right)^{1/2} \leq e^{-r}\qquad \mbox{and} \qquad
\left(\int_{V_{k_2}} \frac{|\varphi^r_2(z)|^2 e^{-2\psi(z)}}{(1+|z|^2)^2} dz \right)^{1/2} \leq Ce^{Lr}.
$$
\end{lemma}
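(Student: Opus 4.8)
The plan is to deduce the decomposition from H\"ormander's $L^2$-solvability of the $\overline{\partial}$-equation on the pseudoconvex strip $V_{k_2}$, using a cut-off in the imaginary direction together with a weight that grows linearly in $|\operatorname{Im} z|$ and whose slope is proportional to $r$. First I would fix auxiliary heights $k_0 < a_1 < a_2 < k_1$ and a smooth function $\chi$ depending only on $y = \operatorname{Im} z$ with $\chi \equiv 1$ on $\{ |y| \le a_1 \}$ and $\chi \equiv 0$ on $\{ |y| \ge a_2 \}$. Since $a_2 < k_1$, the function $g := \overline{\partial}(\chi \varphi) = \varphi\, \overline{\partial}\chi$ is supported in the band $\{ a_1 \le |y| \le a_2 \} \subseteq V_{k_1}$, extends by zero to $V_{k_2}$, and satisfies $\int_{V_{k_2}} |g|^2 e^{-2\psi}\,dz \le \| \overline{\partial}\chi \|_\infty^2 \int_{V_{k_1}} |\varphi|^2 e^{-2\psi}\,dz \le \| \overline{\partial}\chi \|_\infty^2$, using the hypothesis on $\varphi$.

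Next, for a slope $t > 0$ to be fixed, I would apply H\"ormander's theorem with the subharmonic weight $\phi_r := \psi + t|y|$ (the usual logarithmic regularization being what produces the factor $(1+|z|^2)^{-2}$) to obtain $w$ on $V_{k_2}$ with $\overline{\partial}w = g$ and $\int_{V_{k_2}} |w|^2 e^{-2\phi_r}(1+|z|^2)^{-2}\,dz \le \int_{V_{k_2}} |g|^2 e^{-2\phi_r}\,dz$. I then set $\varphi^r_2 := \chi\varphi - w \in \mathcal{O}(V_{k_2})$ and $\varphi^r_0 := \varphi - \varphi^r_2 = (1-\chi)\varphi + w$, which is holomorphic on $V_{k_1}$ and coincides with $w$ on $V_{k_0}$; by construction $\varphi = \varphi^r_0 + \varphi^r_2$. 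For the inner strip I use that $t|y| \le tk_0$ on $V_{k_0}$ while $t|y| \ge ta_1$ on $\operatorname{supp} g$, so that
\[
\int_{V_{k_0}} \frac{|w|^2 e^{-2\psi}}{(1+|z|^2)^2}\,dz \le e^{2tk_0}\int_{V_{k_2}} |g|^2 e^{-2\phi_r}\,dz \le \| \overline{\partial}\chi \|_\infty^2\, e^{-2t(a_1-k_0)}.
\]
Choosing $t = (r + c_0)/(a_1 - k_0)$ with $c_0$ depending only on $\chi$ makes the right-hand side $\le e^{-2r}$, which gives the bound on $\| \varphi^r_0 \|_{k_0,\cdot}$. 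For the outer strip I use only $t|y| \ge 0$ on $\operatorname{supp} g$ and $t|y| \le tk_2$ on $V_{k_2}$, so that
\[
\int_{V_{k_2}} \frac{|w|^2 e^{-2\psi}}{(1+|z|^2)^2}\,dz \le e^{2tk_2}\int_{V_{k_2}} |g|^2 e^{-2\psi}\,dz \le \| \overline{\partial}\chi \|_\infty^2\, e^{2tk_2},
\]
which, combined with the trivial bound $\int_{V_{k_2}} |\chi\varphi|^2 e^{-2\psi}(1+|z|^2)^{-2}\,dz \le 1$, yields $\| \varphi^r_2 \| \le C e^{(k_2/(a_1-k_0))r}$. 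Given $L > k_2/(k_1-k_0)$, I would finally choose $a_1$ close enough to $k_1$ that $k_2/(a_1-k_0) \le L$, which is possible because this ratio tends to $k_2/(k_1-k_0) < L$ as $a_1 \to k_1^-$.

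The crux, and the step I expect to be most delicate, is arranging a \emph{single} weight $\phi_r$ so that one application of H\"ormander simultaneously yields the two opposite exponential bounds $e^{-r}$ and $e^{Lr}$ uniformly in $\psi$: the gain $e^{-2t(a_1-k_0)}$ on the inner strip comes from the gap between the values of the weight on $V_{k_0}$ and on $\operatorname{supp} g$, whereas the loss on the outer strip is governed by the global maximum $tk_2$ of the weight, and it is precisely the ratio of these two moment arms that forces the threshold $k_2/(k_1-k_0)$. I must check that $\phi_r = \psi + t|y|$ remains subharmonic for \emph{every} subharmonic $\psi$ (a sum of a subharmonic function and a function convex in $y$), that $V_{k_2}$ is pseudoconvex so that H\"ormander applies, and — most importantly for the statement — that the resulting constant $C$ is assembled only from $\chi$, the universal H\"ormander constant and the geometric ratio, hence is independent of $\psi$, $\varphi$ and $r$. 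Keeping the slope $t$ affine in $r$ throughout this bookkeeping is what makes the two bounds come out with the correct exponents.
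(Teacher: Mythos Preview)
The paper does not prove this lemma; it is quoted verbatim from \cite[Lemma 2.5]{Langenbruch2012}, so there is no ``paper's own proof'' to compare against beyond that citation. Your argument is correct and is exactly the standard H\"ormander-type proof one expects to find in the cited reference: cut off in the imaginary direction, solve $\overline{\partial}w=g$ on the pseudoconvex strip $V_{k_2}$ with the subharmonic weight $2\psi+2t|y|$ (the function $|y|$ is subharmonic since it is convex in $y$ and independent of $x$), and read off the two estimates by comparing the values of $t|y|$ on $V_{k_0}$, on $\operatorname{supp}g\subset\{a_1\le |y|\le a_2\}$, and on $V_{k_2}$. Your bookkeeping is right: the gain $e^{-t(a_1-k_0)}$ on the inner strip and the loss $e^{tk_2}$ on the outer strip combine, after choosing $t$ affine in $r$, to give exactly the exponent $k_2/(a_1-k_0)$, and letting $a_1\to k_1^-$ brings this below any prescribed $L>k_2/(k_1-k_0)$. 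The only cosmetic point is that $\chi\varphi$ must be extended by zero from $V_{k_1}$ to $V_{k_2}$ before forming $\varphi^r_2=\chi\varphi-w$, which you use implicitly; and the H\"ormander constant in front of the $L^2$-estimate (equal to $1$ in the version of \cite[Theorem 4.4.2]{Hormander}, say) is indeed independent of the weight, so your uniformity claim in $\psi$ holds.
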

In order to be able to apply Lemma \ref{Hormander} to show Proposition \ref{decomp}, we need the following two lemmas.
\begin{lemma}\label{poisson}
Let $h \in (0,\infty]$ and let $\omega$ be a weight function satisfying $(A)_h$. For all $0 < k < h$ there exist a harmonic function $U : V_k \rightarrow \R$ and $L > 0$ such that
\begin{equation}
\frac{1}{L}\omega(x)  \leq U(z) \leq  L(\omega(x) + 1), \qquad z \in V_k.
\label{U-bounds}
\end{equation}
\end{lemma}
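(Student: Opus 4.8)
The plan is to construct $U$ as the Poisson integral (harmonic extension to the strip $V_k$) of the boundary data $x \mapsto C\omega(x)$ on $\partial V_k = \{y = \pm k\}$ for a suitable constant $C$; the Poisson kernel for a strip is explicit, so the estimates \eqref{U-bounds} reduce to a kernel computation that will feed directly on the hypothesis $(A)_h$. Concretely, after the conformal change of variables sending $V_k$ to $V_{\pi/2}$ (i.e.\ rescaling $z \mapsto (\pi/(2k))z$), one may work on $V_{\pi/2}$, where the Poisson kernel for the boundary line $\operatorname{Im} \zeta = \pm\pi/2$ has the classical form involving $\cosh$ and $\cos$: for $\zeta = \xi + i\eta \in V_{\pi/2}$,
$$
P(\xi+i\eta, t) = \frac{1}{2\pi}\,\frac{\cos\eta}{\cosh(\xi - t) + \sin\eta} \quad\text{(boundary }\eta = -\tfrac{\pi}{2}\text{)},
$$
and similarly with $\sin\eta$ replaced by $-\sin\eta$ for the other boundary component; this kernel decays like $e^{-|\xi - t|}$ in $t$. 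I would then set $U(z) := c\int_{\R}\bigl(P_+(z',t) + P_-(z',t)\bigr)\omega(t)\,dt$ with $z'$ the rescaled variable and $c$ chosen so that the integral of the kernel pair over $t$ equals $1$ (it is a probability kernel up to the constant); $U$ is harmonic since it is a superposition of harmonic Poisson kernels, and the integral converges because $\omega(t) = \omega(|t|)$ grows at most like $e^{\mu|t|}$ for every $\mu > 0$ by Lemma \ref{sup-bounds}, in particular slower than the exponential decay $e^{-|\xi-t|}$ of the kernel once we note that for $z$ ranging over $V_k$ with $k < h$ the relevant $\mu$ can be taken below $\pi/(2h) \le \pi/(2k)$ — this is exactly where $(A)_h$ enters.

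For the upper bound $U(z) \le L(\omega(x)+1)$ I would split the $t$-integral into $|t - x| \le 1$, where $\omega(t) \le \omega(|x|+1) \le C'(\omega(x)+1)$ by the pointwise form \eqref{pointwise} of $(A)_h$ and the kernel integrates to at most $1$, and $|t-x| \ge 1$, where I substitute $s = |t| - |x| \ge$ (something controlled) and use $\omega(t) \le \omega(|x| + |t-x|) \le Ce^{\mu|t-x|}(\omega(x)+1)$ together with $\int e^{\mu|\tau|}e^{-|\tau|}\,d\tau < \infty$ for $\mu < 1$; after the rescaling the kernel decays like $e^{-(\pi/(2k))|t-x|}$, so the admissible $\mu$ is any $\mu < \pi/(2k)$, and since $k < h$ we have $\pi/(2k) > \pi/(2h)$, so such $\mu > \pi/(2h)$ exist and $(A)_h$ supplies the needed constant $C$. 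The lower bound $U(z) \ge \omega(x)/L$ is easier: restricting the integral to $|t - x| \le 1$ and using monotonicity $\omega(t) \ge \omega(|x| - 1) $, together with a uniform lower bound on the kernel mass over $\{|t-x|\le 1, |y| < k\}$ (the kernel is bounded below on this compact-in-$\eta$ strip region), gives $U(z) \ge c_0\,\omega(|x|-1) \ge c_0'(\omega(x) - C'')$, and absorbing the constant (or enlarging $L$ and noting $\omega(x) \le L\cdot\omega(x)/L$ trivially once $\omega(x)$ is large, handling small $x$ by continuity and positivity of $U$) yields the claim.

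The main obstacle, and the only genuinely delicate point, is matching the exponential decay rate of the strip Poisson kernel to the admissible growth rate in $(A)_h$: one must verify that the kernel for $V_k$ decays at rate exactly $\pi/(2k)$ in the horizontal direction, so that the threshold $\mu > \pi/(2h)$ appearing in $(A)_h$ — and hence in the equivalent pointwise bound \eqref{pointwise} — is strictly below the decay rate for every $k < h$, leaving room to apply the estimate. In the case $h = \infty$ the hypothesis is \eqref{general} (= $(A)_\infty$), i.e.\ the bound \eqref{pointwise} holds for \emph{every} $\mu > 0$, so there is no constraint at all and any finite $k$ works. Everything else is a routine splitting of the convolution integral against an explicit exponentially decaying kernel, so I would not belabor those computations; Langenbruch's argument in \cite[Section 2]{Langenbruch2012} proceeds along the same lines and can be cited for the kernel facts if a fully self-contained treatment is not desired.
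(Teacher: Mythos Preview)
Your approach is essentially the paper's: build $U$ as a Poisson-type integral on the strip and derive \eqref{U-bounds} from the exponential decay of the strip kernel together with $(A)_h$. You have also correctly isolated the crux, namely that the kernel for a strip of half-width $k$ decays at horizontal rate $\pi/(2k) > \pi/(2h)$, so there is room to invoke $(A)_h$.

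There is, however, a gap in your lower-bound argument, and it comes from rescaling $V_k$ directly to $V_{\pi/2}$. After rescaling, $\eta = \tfrac{\pi}{2k}y$ ranges over the full open interval $(-\tfrac{\pi}{2},\tfrac{\pi}{2})$, which is \emph{not} compact; the factor $\cos\eta$ in the kernel tends to $0$ as $|y|\to k$, so the pointwise density is not bounded below on $\{|t-x|\le 1,\,|y|<k\}$, contrary to your parenthetical justification. The paper avoids this by inserting an auxiliary half-width $l$ with $k<l<h$ and defining $U$ via the Poisson kernel for $V_l$, restricted to $V_k$: then $|\tfrac{\pi}{2l}y| \le \tfrac{\pi k}{2l} < \tfrac{\pi}{2}$ on $V_k$, a genuinely compact range, and all kernel bounds (above and below) become uniform. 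This device also yields a much cleaner lower bound than yours: for $x\ge 0$ one keeps only the half-line $t\ge 0$, uses $\omega(x+t)\ge\omega(x)$ by monotonicity, and computes $\int_0^\infty P(\tfrac{\pi}{2l}t,\tfrac{\pi}{2l}y)\,dt = l-y \ge l-k > 0$ directly, so that $U(z)\ge(l-k)\,\omega(x)$ with no $\omega(|x|-1)$ detour and no appeal to $(A)_h$ on that side. Your two-sided version can be repaired (the combined harmonic measure of $\{|t-x|\le 1\}$ on $\partial V_k$ does stay bounded below as $|y|\to k$, since it concentrates at the nearby boundary point), but not for the reason you state.
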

\begin{proof}
The Poisson kernel $P$ of the strip $V_{\pi/2}$ is given by 
$$
P(z) = \frac{\cos y}{ \cosh x + \sin y}, \qquad z \in V_{\pi/2}.
$$
The function $P$ satisfies the following properties (cf.\ \cite{DeWidder}):
\begin{itemize} 
\item[$(i)$] $P$ is positive, harmonic, and even with respect to $x$ on $V_{\pi/2}$.
\item[$(ii)$] For each $k < \pi/2$ there is $C_k > 0$ such that  $|P(z)| \leq C_ke^{-|x|}$ for all $z \in V_{k}$.
\item[$(iii)$] $\displaystyle \int_{0}^\infty P(x,y) dx = \frac{\pi}{2} - y$ for all $|y| < \pi/2$.
\end{itemize}
Let $0 < k < h$ be arbitrary. Fix $k < l <h$. Condition $(A)_h$ yields that there is $L_0 >0$ such that
\begin{equation}
\int^\infty_0\omega(x+t)  e^{-\frac{\pi}{2l} t}dt \leq L_0(\omega(x) + 1), \qquad x \geq 0.
\label{Ak}
\end{equation}
We define
$$
U(z) = \int_{-\infty}^{\infty} P \left( \frac{\pi}{2l}t, \frac{\pi}{2l}y \right) \omega(x+t) dt, \qquad z \in V_k. 
$$
Since $P$ is harmonic on $V_{\pi/2}$, $(ii)$, and \eqref{Ak} imply that  $U$ is harmonic on $V_k$. As $U$ is even with respect to $x$, it suffices to show \eqref{U-bounds} for all $z \in V_k$ with $x \geq 0$.  Condition $(iii)$ implies that
\begin{align*}
U(z) &\geq \int_0^\infty P \left( \frac{\pi}{2l}t, \frac{\pi}{2l}y \right) \omega(x+t) dt \geq \omega(x) \int_0^\infty P \left( \frac{\pi}{2l}t, \frac{\pi}{2l}y \right)  dt \\
&=  \omega(x) \frac{2l}{\pi} \int_{0}^\infty P \left( t, \frac{\pi}{2l}y \right)  dt =  \omega(x) (l-y) \geq \omega(x) (l-k).
\end{align*}
By $(ii)$ and \eqref{Ak}, we have that
\begin{align*}
U(z) &\leq 2\int_{0}^{\infty} P \left( \frac{\pi}{2l}t, \frac{\pi}{2l}y \right) \omega(x+t) dt \leq 2C_{\pi k /(2l)}\int_{0}^{\infty} \omega(x+t) e^{-\frac{\pi}{2l} t}dt \\
&\leq 2C_{\pi k /(2l)}L_0(\omega(x) + 1).
\end{align*}
This shows \eqref{U-bounds} for suitable $L$.
\end{proof}
Next, we compare $\sup$-norms with $L^2$-norms. Let $\omega$ be a weight function.  For $h, \lambda > 0$ we define $\mathcal{A}_{2;\lambda}(V_h)$ as the Banach space consisting of all $\varphi \in \mathcal{O}(V_h)$ such that
$$
\| \varphi\|_{2; h,\lambda} := \left(\int_{V_{h}} |\varphi(z)|^2 e^{2\lambda\omega(x)} dz \right)^{1/2} < \infty.
$$
\begin{lemma}\label{L-2} Let $\omega$ be a weight function satisfying \eqref{translation-invariant}. 
\begin{itemize}
\item[$(i)$]
For all $h > 0$ and $0 < \mu <\lambda$ there is $C > 0$ such that
$$
\| \varphi\|_{2; h,\mu} \leq C \| \varphi\|_{h,\lambda}, \qquad \varphi \in \mathcal{A}_{\lambda}(V_h).
$$
\item[$(ii)$] For all $0 < k < h$ there is $L > 0$ such that for all $\lambda > 0$ there is $C > 0$ with
$$
\| \varphi\|_{k,\lambda} \leq C \| \varphi\|_{2;h,L\lambda}, \qquad \varphi \in \mathcal{A}_{2;L\lambda}(V_h).
$$
\end{itemize}
\end{lemma}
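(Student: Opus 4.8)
The plan is to prove $(i)$ by a direct estimate (pulling the supremum out of the integral) and $(ii)$ by the sub-mean-value property of the subharmonic function $|\varphi|^2$ combined with the translation invariance \eqref{translation-invariant} of $\omega$.

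For $(i)$, fix $h > 0$ and $0 < \mu < \lambda$. If $\varphi \in \mathcal{A}_{\omega,\lambda}(V_h)$, then $|\varphi(z)| \le \|\varphi\|_{h,\lambda}\, e^{-\lambda\omega(x)}$ for every $z \in V_h$, so
\[
\|\varphi\|_{2;h,\mu}^2 \le \|\varphi\|_{h,\lambda}^2 \int_{V_h} e^{-2(\lambda-\mu)\omega(x)}\,dz = 4h\,\|\varphi\|_{h,\lambda}^2 \int_0^\infty e^{-2(\lambda-\mu)\omega(x)}\,dx .
\]
The last integral is finite because $\log t = o(\omega(t))$ forces $\omega(x) \geq \tfrac{1}{\lambda-\mu}\log x$ for all large $x$, so that the integrand is eventually dominated by $x^{-2}$; taking $C := 2\sqrt{h}\,(\int_0^\infty e^{-2(\lambda-\mu)\omega(x)}\,dx)^{1/2}$ proves $(i)$. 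Note that \eqref{translation-invariant} is not needed here: only the defining growth condition $\log t = o(\omega(t))$ of a weight function is used, and it is precisely what makes $e^{-c\omega}$ integrable on $\R$ for every $c > 0$.

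For $(ii)$, fix $0 < k < h$ and set $\delta := (h-k)/2$. For $z_0 = x_0 + iy_0 \in V_k$ the disc $D(z_0,\delta)$ is contained in $V_{(k+h)/2} \subseteq V_h$, and since $|\varphi|^2$ is subharmonic (indeed $\Delta|\varphi|^2 = 4|\varphi'|^2 \geq 0$) the area form of the sub-mean-value inequality gives $|\varphi(z_0)|^2 \le (\pi\delta^2)^{-1}\int_{D(z_0,\delta)} |\varphi(z)|^2\,dz$. On $D(z_0,\delta)$ we have $|x_0| \le |x| + \delta$, so \eqref{translation-invariant} (applied with $t = |x|$, $s = \delta$) yields $\omega(x_0) \le C_0 e^{\mu_0\delta}(\omega(x)+1)$, i.e.\ $\omega(x) \ge c_0\,\omega(x_0) - 1$ on $D(z_0,\delta)$, where $C_0, \mu_0$ are the constants from \eqref{translation-invariant} and $c_0 := (C_0 e^{\mu_0\delta})^{-1} \le 1$. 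Multiplying the sub-mean-value inequality by $e^{2\nu c_0\omega(x_0)}$ and absorbing this factor into the integrand therefore gives, for every $\nu > 0$,
\[
|\varphi(z_0)|^2\, e^{2\nu c_0\,\omega(x_0)} \le \frac{e^{2\nu}}{\pi\delta^2} \int_{D(z_0,\delta)} |\varphi(z)|^2\, e^{2\nu\omega(x)}\,dz \le \frac{e^{2\nu}}{\pi\delta^2}\,\|\varphi\|_{2;h,\nu}^2 .
\]
Given $\lambda > 0$, set $L := 1/c_0$ (depending only on $h$ and $k$) and apply the last display with $\nu := L\lambda$, so that $\nu c_0 = \lambda$; taking square roots and then the supremum over $z_0 \in V_k$ yields $\|\varphi\|_{k,\lambda} \le C\,\|\varphi\|_{2;h,L\lambda}$ with $C := e^{L\lambda}/(\sqrt{\pi}\,\delta)$, which is allowed to depend on $\lambda$.

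I do not anticipate a genuine obstacle; both parts are routine. The one point worth flagging is that in $(ii)$ the loss of a factor in the weight (the constant $L > 1$) is unavoidable with this method, because recovering the pointwise value at $z_0$ from the $L^2$-norm on $V_h$ forces one to average $\omega(x)$ over a whole disc around $z_0$, which \eqref{translation-invariant} controls by $\omega(x_0)$ only up to the multiplicative constant $c_0 < 1$.
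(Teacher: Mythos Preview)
Your proof is correct and follows essentially the same approach as the paper: part $(i)$ is identical, and in part $(ii)$ the paper uses the mean-value equality for the holomorphic function $\varphi$ followed by Cauchy--Schwarz, whereas you apply the sub-mean-value inequality to the subharmonic function $|\varphi|^2$ directly---a cosmetic difference yielding the same estimate. Your remark that \eqref{translation-invariant} is not needed for $(i)$ is also accurate.
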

\begin{proof}
$(i)$ We have that 
$$
\| \varphi\|_{2; h,\mu} \leq \| \varphi\|_{h,\lambda}  \left(\int_{V_{h}} e^{-2(\lambda - \mu)\omega(x)} dz \right)^{1/2}, \qquad  \varphi \in \mathcal{A}_{\lambda}(V_h).
$$
$(ii)$ Property \eqref{translation-invariant} implies that there is $L > 0$ such tat
$$
\omega(x+y) \leq L (\omega(x)  + 1), \qquad x \in \R, |y| \leq h-k.
$$
Let $\varphi \in \mathcal{A}_{2;L\lambda}(V_h)$ be arbitrary. The mean value theorem implies that
$$
\varphi(z) = \frac{1}{\pi(h-k)^2} \int_{B(z,h-k)} \varphi(w) dw, \qquad z \in V_k.
$$
Hence,
\begin{align*}
\| \varphi\|_{k,\lambda} &\leq \frac{1}{\pi(h-k)^2} \sup_{z \in V_k}  e^{\lambda\omega(x)} \int_{B(z,h-k)} |\varphi(w)| dw \\
&\leq  \frac{e^{\lambda L}}{\pi(h-k)^2}  \sup_{z \in V_k} \int_{B(z,h-k)} |\varphi(w)|e^{L\lambda \omega(\operatorname{Re} w)} dw  \\
&\leq  \frac{e^{\lambda L}}{\sqrt{\pi}(h-k)}  \sup_{z \in V_k} \left(\int_{B(z,h-k)} |\varphi(w)|^2e^{2L\lambda \omega(\operatorname{Re} w)} dw\right)^{1/2} \\
&\leq  \frac{e^{\lambda L}}{\sqrt{\pi}(h-k)}  \| \varphi\|_{2;h,L\lambda}. 
\end{align*}
\end{proof}
\begin{proof}[Proof of Proposition \ref{decomp}] This is a consequence of Lemma \ref{Hormander}, Lemma \ref{poisson}, and Lemma \ref{L-2} (cf.\ the proof of \cite[Corollary 2.6]{Langenbruch2012}).
\end{proof}
Finally, we need the following abstract lemma.
\begin{lemma}\label{omega-loc}
Let $E_0 \supset E_1 \supset \cdots$ be a decreasing sequence of Banach spaces with $\| \, \cdot \, \|_{E_0} \leq \| \, \cdot \, \|_{E_1} \leq \cdots$.  Denote by $U_n$ the unit ball in $E_n$ for  $n \in \N$. Set $E = \bigcap_{n \in \N} E_n$ and endow $E$ with its natural projective limit topology, that is, the topology generated by the basis $\{ U_n \cap E \, | \, n \in \N\}$ of neighbourhoods of zero. Suppose that
\begin{equation}
\forall n \in \N \, \exists m \geq n \, \forall k \geq m \, \exists C,L > 0 \, \forall r > 0 \, :  \, U_m \subseteq  e^{-r}U_n + Ce^{Lr} U_k.
\label{Omega-loc}
\end{equation}
Then, $E$ satisfies $(\Omega)$.
\end{lemma}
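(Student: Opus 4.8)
The plan is to bootstrap the purely Banach‑space inclusions in \eqref{Omega-loc} up to the $(\Omega)$–inclusions for $E$, where \emph{both} summands must lie in $E$. The bootstrap is an iteration of \eqref{Omega-loc} up the scale $(E_n)_n$, the two summands being assembled as series that converge in $E$.

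\emph{Setup and reductions.} Let $m(n)\ge n$ be the index attached to $n$ by \eqref{Omega-loc}. Replacing $m(n)$ by $\max(m(n),n+1)$ only shrinks $U_{m(n)}$, hence keeps \eqref{Omega-loc} valid, so we may assume $m(n)>n$. Fix $n$ and put $n_0:=n$, $n_{l+1}:=m(n_l)$; then $(n_l)_l$ is strictly increasing and for every $l$ and every index $k\ge n_{l+1}$ there are $C,L>0$ with $U_{n_{l+1}}\subseteq e^{-r}U_{n_l}+Ce^{Lr}U_k$ for all $r>0$. I claim $m:=n_1$ works in $(\Omega)$ for $E$. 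Given $k\ge n_1$, pick $p\ge1$ minimal with $n_p\ge k$. Since $U_{n_p}\cap E\subseteq U_k\cap E$, it suffices to find $C,L$ (depending on $p$) with
\[ U_{n_1}\cap E\ \subseteq\ e^{-r}(U_{n_0}\cap E)+Ce^{Lr}(U_{n_p}\cap E),\qquad r>0; \]
for $p=1$ this holds with the first summand $0$, so assume $p\ge2$.

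\emph{The iteration.} Fix $\varphi\in U_{n_1}\cap E$ and $r>0$. Applying \eqref{Omega-loc} for the triple $(n_0,n_1,n_p)$ to $\varphi$ with parameter $r_0:=r+\ln 2$ gives $\varphi=u_0+w_0$ with $\|u_0\|_{E_{n_0}}\le\tfrac12 e^{-r}$ and $\|w_0\|_{E_{n_p}}\le B_0:=C_0e^{L_0r_0}$; note $w_0\in E_{n_p}$ and $u_0=\varphi-w_0\in E_{n_p}$. Inductively, from $w_{i-1}\in E_{n_{p+i-1}}$ with $\|w_{i-1}\|_{E_{n_{p+i-1}}}\le B_{i-1}<\infty$, apply \eqref{Omega-loc} for $(n_{p+i-2},n_{p+i-1},n_{p+i})$ to $w_{i-1}/B_{i-1}$ with parameter $r_i$ large enough (one may take $r_i:=\ln B_{i-1}+r+(i+1)\ln 2$) that the small summand $u_i$ satisfies $\|u_i\|_{E_{n_{p+i-2}}}\le 2^{-i-1}e^{-r}$, and let $w_i\in E_{n_{p+i}}$ be the controlled summand, so $w_{i-1}=u_i+w_i$ and $\|w_i\|_{E_{n_{p+i}}}\le B_i<\infty$. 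Then $\varphi=u_0+\cdots+u_N+w_N$ for all $N$. Set $u:=\sum_{i\ge0}u_i$: since $\|u_i\|_{E_{n_0}}\le\|u_i\|_{E_{n_{p+i-2}}}\le 2^{-i-1}e^{-r}$ for $i\ge1$, this converges in $E_{n_0}$ with $\|u\|_{E_{n_0}}\le e^{-r}$. The key point is that $u\in E$: for each $j\ge p$ write $u=(u_0+\cdots+u_{j-p})+\sum_{i>j-p}u_i$; the finite part equals $\varphi-w_{j-p}\in E_{n_j}$ (as $\varphi\in E$ and $w_{j-p}\in E_{n_{p+(j-p)}}=E_{n_j}$), while the tail converges in $E_{n_j}$ because $u_i\in E_{n_{p+i-1}}\subseteq E_{n_j}$ for $i\ge j-p+1$ and $\|u_i\|_{E_{n_j}}\le 2^{-i-1}e^{-r}$ once $i\ge j-p+2$. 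Hence $u\in E_{n_j}$ for all $j$, so $u\in E$, and $v:=\varphi-u=w_0-\sum_{i\ge1}u_i\in E$. Finally $\|v\|_{E_{n_p}}\le\|w_0\|_{E_{n_p}}+\|u_1\|_{E_{n_p}}+\sum_{i\ge2}\|u_i\|_{E_{n_p}}\le 2B_0+B_1+\tfrac14 e^{-r}$, and $B_0$ and $B_1$ are bounded by $Ce^{Lr}$ with $C,L$ depending only on $p$ and on the \eqref{Omega-loc}–constants for $(n_0,n_1,n_p)$ and $(n_{p-1},n_p,n_{p+1})$, so $\|v\|_{E_{n_p}}\le Ce^{Lr}$. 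Thus $\varphi=u+v$ is the required decomposition, and $(\Omega)$ follows.

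\emph{The main obstacle} is precisely the passage from \eqref{Omega-loc}, a statement about the Banach spaces $E_n$, to a statement inside $E$: one application of \eqref{Omega-loc} decomposes $\varphi$ into pieces living only in $E_{n_0}$ and $E_{n_p}$, not in $E$. What makes the iteration close up is extracting at step $i$ a correction small not merely in $\|\cdot\|_{E_{n_0}}$ but in the far stronger norm $\|\cdot\|_{E_{n_{p+i-2}}}$; together with the telescoping identity $u_0+\cdots+u_{j-p}=\varphi-w_{j-p}$ this forces $\sum_i u_i$ into $E$. Choosing the parameters $r_i$ to grow fast enough to dominate the (rapidly increasing) bounds $B_{i-1}$, and checking that the final estimate for $\|v\|_{E_{n_p}}$ involves only finitely many of the \eqref{Omega-loc}–constants, is the technical heart; the rest is bookkeeping.
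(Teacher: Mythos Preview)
Your iteration is correct: the telescoping identity $u_0+\cdots+u_N=\varphi-w_N$ together with the fact that the tail $\sum_{i>N}u_i$ converges in ever stronger norms does force $u\in E$, and your control of $\|v\|_{E_{n_p}}$ through only $B_0$ and $B_1$ (hence through finitely many \eqref{Omega-loc}--constants) is the right observation. One small point: your formula $r_i=\ln B_{i-1}+r+(i+1)\ln 2$ could be negative if $B_{i-1}<1$, but the inclusion in \eqref{Omega-loc} is trivially valid for $r\le 0$ since then $e^{-r}U_n\supseteq U_n\supseteq U_m$, so this causes no trouble.

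The paper's argument is genuinely different and considerably shorter. It first observes that \eqref{Omega-loc} implies the approximation property $E_{\widetilde n}\subseteq E_k+\varepsilon U_n$ for all $k$ and all $\varepsilon>0$, and then invokes a standard Mittag--Leffler--type lemma for projective limits of Banach spaces to upgrade this to $E_{\widetilde n}\subseteq E+\varepsilon U_n$. With this density statement in hand, a \emph{single} application of \eqref{Omega-loc} (with a suitably enlarged third index $\widetilde k$) followed by \emph{one} correction step already yields the decomposition inside $E$. In effect the paper separates the two tasks---landing in $E$, and keeping the quantitative $(\Omega)$ bounds---while you carry them out simultaneously by iterating \eqref{Omega-loc} along the ladder $(n_l)_l$. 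Your approach is self-contained (no external lemma needed) and makes the mechanism completely explicit; the paper's approach is cleaner and avoids the bookkeeping of tracking $B_i$, $r_i$, and the various norm levels, at the price of citing the density lemma as a black box.
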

\begin{proof}
Condition \eqref{Omega-loc} yields that
$$
\forall n \in \N \, \exists \widetilde{n} \geq n \, \forall k \geq m \,  \forall \varepsilon > 0 \, :  \, E_{\widetilde{n}} \subseteq  E_k + \varepsilon U_n.
$$
Since the  spaces $E_n$ are Banach, a standard argument (see e.g.\ \cite[Lemma, p.\ 260]{Dubinsky-proj})  shows that the above property in fact entails the following a priori stronger property
$$
\forall n \in \N \, \exists \widetilde{n} \geq n \,  \forall \varepsilon > 0 \, :  \, E_{\widetilde{n}} \subseteq  E + \varepsilon U_n.
$$
Let $n \in \N$ be arbitrary. Choose $m \geq n$ according to \eqref{Omega-loc}. Let $k \geq m$ be arbitrary. Choose $\widetilde{k} \geq k$ such that $E_{\widetilde{k}} \subseteq  E + \varepsilon U_k$ for all $\varepsilon > 0$.  Choose $C$ and $L$ according to  \eqref{Omega-loc} with $\widetilde{k}$ instead of $k$. Let $e \in E$ with $\|e\|_m \leq 1$ and $r > 0$ be arbitrary. Then, there exist $\widetilde{x}^r \in E_{\widetilde{k}}$ such that 
$$
\| e-\widetilde{x}^r\|_{E_n} \leq e^{-r} \qquad \mbox{and} \qquad \|\widetilde{x}^r\|_{E_{\widetilde{k}}} \leq Ce^{Lr}.
$$
Choose $x^r \in E$ such that $\| \widetilde{x}^r  -x^r \|_{E_k} \leq e^{-r}$. Then, 
$$
\| e-x^r\|_{E_n}  \leq \| e-\widetilde{x}^r\|_{E_n}  + \|\widetilde{x}^r- x^r\|_{E_k} \leq 2e^{-r} 
$$
and
$$
\|x^r\|_{E_k} \leq \|  x^r-\widetilde{x}^r\|_{E_k} + \|\widetilde{x}^r\|_{E_{\widetilde{k}}} \leq (C+1)e^{Lr}.
$$
This shows that $E$ satisfies $(\Omega)$.
\end{proof}
\begin{proof}[Proof of Proposition \ref{Omega}] By Lemma \ref{omega-loc}, it suffices to show the following statement: \emph{Let $0 < k_0 < k_1 < k_2 < h$ and let $\lambda_0 > 0$  be arbitrary. Set $\lambda_1 = \max \{ K \lambda_0, \lambda_0+1\}$, where $K$ is the constant from Proposition \ref{decomp}. Let $\lambda _2 \geq \lambda_1$ be arbitrary. There are $C_0, C_2, L_0,L_2 > 0$ such that  for all $\varphi \in \mathcal{A}_{\lambda_1}(V_{k_1})$ with $\| \varphi\|_{k_1,\lambda_1} \leq 1$ and  $r > 0$ there exist $\psi^r_0 \in \mathcal{A}_{\omega,\lambda_0}(V_{k_0})$ and $\psi^r_2 \in \mathcal{A}_{\omega,\lambda_2}(V_{k_2})$ with $\varphi = \psi^r_0 + \psi^r_2$  such that
\begin{equation}
 \| \psi^r_0 \|_{k_0,\lambda_0} \leq C_0e^{-r/L_0}\qquad \mbox{and} \qquad
\| \psi^r_2 \|_{k_2,\lambda_2} \ \leq C_2e^{L_2r}.
\label{boundz}
\end{equation}}
We may assume without loss of generality that $r \geq 1$. Fix $k_2 < k_3 < h$. Proposition \ref{cut-off} yields that the functions $E_{r,A} = E^{k_3}_{r,A}$, $r \geq 1, A > 0$, belong to $\mathcal{O}(V_{k_2})$ and that  there are $B,C_3,L_3,l_3 >0$ with
\begin{gather*}
|E_{r,A}(z)| \leq C_3e^{L_3r}, \qquad z \in V_{k_2}. \\ 
|E_{r,A}(z)| \leq  C_3e^{-l_3r e^{\frac{\pi}{2k_3}(|x | -  A )}}, \qquad z \in V_{k_2},  |x| \geq  A + B.\\
|1- E_{r,A}(z)| \leq  C_3e^{-l_3re^{\frac{\pi}{2k_3}( A- |x |)}}, \qquad z \in V_{k_2}, |x| \leq  A - B.  
\end{gather*}
 By Proposition \ref{decomp}, there are $C_4,L_4 > 0$ such that for all $\varphi \in \mathcal{A}_{\lambda_1}(V_{k_1})$ with $\| \varphi\|_{k_1,\lambda_1} \leq 1$ and  $r \geq 1$ there exist $\varphi^r_0 \in \mathcal{A}_{\omega,\lambda_0}(V_{k_0})$ and  $\varphi^r_2 \in \mathcal{A}_{\omega,\lambda_0}(V_{k_2})$ with $\varphi = \varphi^r_0 + \varphi^r_2$  such that
$$
 \| \varphi^r_0 \|_{k_0,\lambda_0} \leq e^{-r}\qquad \mbox{and} \qquad
\| \varphi^r_2 \|_{k_2,\lambda_0} \ \leq C_4e^{L_4r}.
$$
Choose $A_r > 0$ such that $\omega(A_r - B) = r$ for $r \geq 1$.  Let $\varphi \in \mathcal{A}_{\lambda_1}(V_{k_1})$ with $\| \varphi\|_{k_1,\lambda_1} \leq 1$ and  $r \geq 1$
be arbitrary. We set
$$
\psi^r_0 = (1- E_{r/(2L_3), A_r})\varphi + E_{r/(2L_3), A_r}\varphi^r_0 \qquad \mbox{and} \qquad
\psi^r_2 = E_{r/(2L_3), A_r}\varphi^r_2.
$$
Then, $\psi^r_0 \in \mathcal{O}(V_{k_0})$ and $\psi^r_2 \in \mathcal{O}(V_{k_2})$. Moreover, it is clear that $\varphi = \psi^r_0 + \psi^r_2$. We now show \eqref{boundz}.
Fix $k_3 < k_5 < h$. By Lemma \ref{sup-bounds}, there is $L_5 > 0$ such that
$$
\omega(x+y) \leq L_5e^{\frac{\pi}{2k_5} |x|} (\omega(y) + 1), \qquad x,y \in \R.
$$
For $z \in V_{k_0}$ with $|x| \leq A_r - B$ it holds that
$$
 |(1- E_{r/(2L_3), A_r}(z))\varphi(z)|e^{\lambda_0 \omega(x)} \leq   C_3e^{-\frac{l_3}{2L_3}re^{\frac{\pi}{2k_3}( A_r- |x |)}} \leq C_3e^{-\frac{l_3}{2L_3}e^{\frac{\pi B}{2k_3}}r} .
 $$
 For $z \in V_{k_0}$ with $|x| \geq A_r - B$ it holds that
$$
 |(1- E_{r/(2L_3), A_r}(z))\varphi(z)|e^{\lambda_0 \omega(x)} \leq (C_3+1)e^{\frac{r}{2} - \omega(x)}  \leq (C_3+1)e^{\frac{r}{2} - \omega(A_r-B)} \leq (C_3+1)e^{-\frac{r}{2}}.
 $$
 For $z \in V_{k_0}$ it holds that
 $$
 |E_{r/(2L_3), A_r}(z)\varphi^r_0(z)|e^{\lambda_0 \omega(x)} \leq C_3e^{\frac{r}{2}} \| \varphi^r_0 \|_{k_0,\lambda_0} \leq C_3e^{-\frac{r}{2}}.
 $$
 Hence, $ \| \psi^r_0 \|_{k_0,\lambda_0} \leq C_0e^{-r/L_0}$ for suitable $C_0,L_0$. Next, we consider $\psi^r_2$. Note that
 $$
 \omega(A_r+B) \leq L_5 e^{\frac{\pi B}{k_5}}(\omega(A_r-B) +1) \leq  L_5 e^{\frac{\pi B}{k_5}}(r +1).
 $$
 Hence, for $z \in V_{k_2}$ with $|x| \leq A_r+B$  it holds that
 $$
  |\psi^r_2(z)| e^{\lambda_2 \omega(x)} \leq C_3 e^{\frac{r}{2} + \lambda_2 \omega(A_r + B)} \| \varphi^r_2\| _{k_2, \lambda_0} \leq C_3 C_4 e^{\lambda_2L_5e^{\frac{\pi B}{k_5}}} e^{(\frac{1}{2} + L_4 + \lambda_2L_5e^{\frac{\pi B}{k_5}} )r}.
 $$
Since $r \geq 1$, we have that
  $$
 \omega(x) \leq L_5e^{\frac{\pi}{2k_5} |x-A_r + B|} (\omega(A_r-B) + 1) \leq 2L_5 e^{\frac{\pi B}{2k_5}}e^{\frac{\pi}{2k_5} |x-A_r|} r, \qquad x \in \R.
 $$
Set $L_6 =  2L_5 e^{\frac{\pi B}{2k_5}}$. Then, for $z \in V_{k_2}$ with $|x| \geq A_r+B$ it holds that
 \begin{align*}
 |\psi^r_2(z)| e^{\lambda_2 \omega(x)} &\leq C_3e^{-\frac{l_3}{2L_3}r e^{\frac{\pi}{2k_3}(|x | -  A_r)} + \lambda_2 \omega(x)}\| \varphi^r_2\| _{k_2, \lambda_0}  \\
 &\leq C_3C_4e^{-\frac{l_3}{2L_3}r e^{\frac{\pi}{2k_3}(|x | -  A_r)} + \lambda_2L_6re^{\frac{\pi}{2k_5} (|x | -  A_r)} + L_4r} \\
 &\leq C_3C_4 e^{ \left ( \lambda_2L_6 -  \frac{l_3}{2L_3} e^{(\frac{\pi}{2k_3} -\frac{\pi}{2k_5}) (|x | -  A_r)} \right)e^{\frac{\pi}{2k_5} (|x | -  A_r)} r + L_4r}
 \end{align*}
 Set
 $$
 L_7 = \frac{\max \{ 0, \log(2\lambda_2L_3L_6/l_3) \}}{\frac{\pi}{2k_3} - \frac{\pi}{2k_5}}.
 $$
 If $|x| \geq A_r+L_7$, we have that
 $$
 |\psi^r_2(z)| e^{\lambda_2 \omega(x)} \leq C_3C_4e^{L_4r}.
 $$ 
 If  $|x| \leq A_r+L_7$, we have that
 $$
|\psi^r_2(z)| e^{\lambda_2 \omega(x)} \leq  C_3C_4  e^{(L_4 + \lambda_2L_6 e^{\frac{\pi L_6}{2k_5}}) r}.
 $$
 Hence, $\| \psi^r_2 \|_{k_2,\lambda_2} \leq C_2e^{L_2 r}$ for suitable $C_2, L_2$.
\end{proof}

\subsection{The condition $(DN)$} A Fr\'echet space $E$ with a fundamental increasing sequence $(\| \, \cdot \, \|_n)_{n \in \N}$ of seminorms  is said to satisfy $(DN)$ \cite{MV}  if
$$
\exists n \in \N \, \forall m \geq n \, \exists k \geq m \exists C > 0  \, \forall e \in E \, : \, \|e\|^2_m \leq C \|e \|_n \|e \|_k.
$$
\begin{proposition}\label{DN}
Let $\omega$ be a weight function satisfying \eqref{translation-invariant}. Then, $\mathcal{U}_\omega(\C)$ satisfies $(DN)$.
\end{proposition}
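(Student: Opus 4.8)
I would verify the following reformulation of $(DN)$, equivalent to the one in the definition (cf.\ \cite{MV}): there is a continuous norm $\|\,\cdot\,\|$ on $\mathcal{U}_\omega(\C)$ such that for every continuous seminorm $p$ there are a continuous seminorm $q$, $\theta \in (0,1)$ and $C > 0$ with $p \leq C\|\,\cdot\,\|^{1-\theta}q^{\theta}$. Fix $h > 0$ such that $\omega$ satisfies $(A)_h$ (possible by \eqref{translation-invariant} and Remark \ref{remark-ti}); take $\|\,\cdot\,\| := \|\,\cdot\,\|_{h/2,1}$, and for a given seminorm $\|\,\cdot\,\|_{K,\lambda}$ produce $q := \|\,\cdot\,\|_{K',\lambda'}$ with $K' := K+h$ and $\lambda' := \max\{2\lambda,1\}$. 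Writing $A := \|\varphi\|_{h/2,1}$ and $B := \|\varphi\|_{K',\lambda'}$, we have $A \leq B$ (as $K' > h/2$, $\lambda' \ge 1$), and since $\overline{\varphi(\overline z)} \in \mathcal{U}_\omega(\C)$ has the same norms as $\varphi$, it suffices to bound $|\varphi(x+iy)|e^{\lambda\omega(x)}$ for $0 \leq y \leq K$; recall also that $|\varphi(z)| \leq B e^{-\lambda'\omega(x)}$ on $V_{K'}$ and that $\varphi$ is bounded on every strip.

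The core would be a telescoping chain of three-lines estimates. Fix $w := h/3$ and $\mu \in (\pi/(2h), \pi/(2w))$ (non-empty since $w < h$), so $\omega(x) = O(e^{\mu|x|})$ by Lemma \ref{sup-bounds} and $2w < \pi/\mu$. For $j \geq 0$ put $S_j := \{z : jw \leq \operatorname{Im} z \leq (j+2)w\}$; as $S_j$ is a vertical translate of a symmetric strip of half-width $w < h$, and vertical translation fixes $\operatorname{Re} z$ (hence $\omega(x)$), Lemma \ref{poisson} yields a harmonic $U_j$ on a neighbourhood of $S_j$ with $\tfrac1L\omega(x) \leq U_j(z) \leq L(\omega(x)+1)$, for one fixed $L > 1$; let $g_{j,\alpha}$ be a zero-free holomorphic function there with $|g_{j,\alpha}| = e^{\alpha U_j}$. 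The key step would be: \emph{if $|\varphi(z)| \leq A_j e^{-\alpha_j\omega(x)}$ on $\operatorname{Im} z = jw$, then $|\varphi(z)| \leq A_{j+1}e^{-\alpha_{j+1}\omega(x)}$ for $jw \leq \operatorname{Im} z \leq (j+1)w$, where $\alpha_{j+1} := \alpha_j/L^2$ and $A_{j+1} := \min\{A_j^{1/2}B^{1/2}e^{\alpha_j},\,B\}$.} With $\alpha := \alpha_j/L$ the function $\varphi g_{j,\alpha}$ satisfies $|\varphi g_{j,\alpha}| \leq A_j e^{\alpha_j}$ on $\operatorname{Im} z = jw$ and $|\varphi g_{j,\alpha}| \leq B e^{\alpha_j}$ on $\operatorname{Im} z = (j+2)w$ (since $\alpha L = \alpha_j \leq 1 \leq \lambda'$), and it grows at most like $\exp(c\,e^{\mu|x|})$ on $S_j$ ($\varphi$ bounded on $V_{(j+2)w}$, $\omega(x) = O(e^{\mu|x|})$); because $2w < \pi/\mu$, the Phragm\'en--Lindel\"of form of Hadamard's three-lines theorem applies on $S_j$ and gives $|\varphi(z)| \leq A_j^{1-t}B^t e^{\alpha_j}e^{-\alpha U_j(z)} \leq A_j^{1-t}B^t e^{\alpha_j}e^{-\alpha_{j+1}\omega(x)}$ with $t = (\operatorname{Im} z - jw)/(2w) \leq \tfrac12$. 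Since $A_0 = A \leq B$ and every $A_{j+1} \leq B$, one always has $A_j \leq B$, hence $A_j^{1-t}B^t \leq A_j^{1/2}B^{1/2}$; and if the minimum defining $A_{j+1}$ is $B$, then $|\varphi(z)| \leq B e^{-\lambda'\omega(x)} \leq B e^{-\alpha_{j+1}\omega(x)}$ gives the claim directly.

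Starting from $A_0 := A$, $\alpha_0 := 1$ (valid since $|\varphi(z)| \leq A e^{-\omega(x)}$ on $\{\operatorname{Im} z = 0\} \subset V_{h/2}$) and iterating $N := \lceil K/w\rceil$ times — which needs only $(N+1)w \leq K'$, ensured by $K' = K+h$ — the inequality $\log A_{j+1} \leq \tfrac12\log A_j + \tfrac12\log B + \alpha_j$ telescopes to $A_j \leq e^{S}A^{2^{-j}}B^{1-2^{-j}}$ with $S := \sum_{i\geq0}\alpha_i = (1-L^{-2})^{-1}$, while $\alpha_j = L^{-2j}$. Using $A \leq B$, for all $0 \leq y \leq K$ this yields
\[
|\varphi(x+iy)| \leq e^{S}\,A^{\gamma}\,B^{1-\gamma}\,e^{-\rho\,\omega(x)}, \qquad \gamma := 2^{-N} \in (0,1),\quad \rho := L^{-2N} > 0,
\]
with $\gamma, \rho, S$ depending only on $h$ and $K$. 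Interpolating this geometrically with $|\varphi(z)| \leq B e^{-\lambda'\omega(x)}$ (both with exponent $\tfrac12$) and multiplying by $e^{\lambda\omega(x)}$,
\[
|\varphi(x+iy)|\,e^{\lambda\omega(x)} \leq e^{S/2}\,A^{\gamma/2}\,B^{1-\gamma/2}\,e^{(\lambda-\frac12\rho-\frac12\lambda')\omega(x)} \leq e^{S/2}\,A^{\gamma/2}\,B^{1-\gamma/2}
\]
since $\lambda' \geq 2\lambda$. Thus $\|\varphi\|_{K,\lambda} \leq e^{S/2}\,\|\varphi\|_{h/2,1}^{\,1-\theta}\,\|\varphi\|_{K',\lambda'}^{\,\theta}$ with $\theta := 1-\gamma/2 \in (0,1)$, which is the required estimate; hence $\mathcal{U}_\omega(\C)$ satisfies $(DN)$.

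The hard part is the telescoping. Because \eqref{translation-invariant} only provides harmonic functions comparable to $\omega(x)$ on strips of bounded width, the weighted bound on $V_{h/2}$ cannot be transported to the wide strip $V_K$ in one Phragm\'en--Lindel\"of step, so the number of steps — and with it the exponent $\gamma = 2^{-N}$ of the dominating norm and the decay rate $\rho = L^{-2N}$ — deteriorates with $K$. The argument still closes because $\gamma$ stays strictly positive, which is all the $\theta$-form of $(DN)$ requires, and because the losses in $\rho$ and in the exponent of $q$ are harmless: $q = \|\,\cdot\,\|_{K',\lambda'}$ dominates $A$, and $\lambda'$ may be chosen arbitrarily large. (This also explains why the same method is not claimed for the finite-strip spaces $\mathcal{U}_\omega(V_h)$: there $K'$ is capped by $h$.)
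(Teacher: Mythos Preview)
Your argument is correct, but it takes a considerably longer route than the paper's. The paper derives $(DN)$ in one stroke from Lemma \ref{three-lines}, whose proof applies the Hadamard three-\emph{circles} theorem to each translate $\varphi(x+\cdot)$ on nested disks $B(0,k_0)\subset B(0,k_1)\subset B(0,k_2)$ and then uses \eqref{translation-invariant} only to compare $\omega(x)$ with $\omega(\operatorname{Re}(x+w))$ for $|w|\le k_2$. This is a single step: no iteration, no Phragm\'en--Lindel\"of growth hypothesis, no harmonic majorants from Lemma \ref{poisson}, and no auxiliary holomorphic weights $g_{j,\alpha}$. By contrast, your iterated three-lines scheme on strips $S_j$ of fixed width $2w$ does work and the telescoping bookkeeping is carried out carefully, but all of the machinery you assemble---Lemma \ref{poisson} for the $U_j$, the Phragm\'en--Lindel\"of check via $\omega(x)=O(e^{\mu|x|})$ with $\mu<\pi/(2w)$, the recursion with deteriorating exponents $\gamma=2^{-N}$ and $\rho=L^{-2N}$, and the final geometric interpolation to repair the weight---is bypassed by the three-circles trick, which delivers the interpolation exponent $\theta=\log(k_2/k_1)/\log(k_2/k_0)$ at once. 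Note also that Lemma \ref{three-lines} is stated and proved for $\mathcal{U}_\omega(V_h)$ with finite $h$ as well, so your closing remark---that the method cannot reach the finite-strip spaces because $K'$ is capped---applies to your strip-iteration approach but not to the paper's.
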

Proposition \ref{DN} is a direct consequence of the following weighted version of the Hadamard three-lines theorem; its proof is inspired by the proof of \cite[Proposition 3.1]{Langenbruch2012}.
\begin{lemma}\label{three-lines}
Let $\omega$ be a weight function satisfying \eqref{translation-invariant}. Let $0 <k_0 < k_1 < k_2 < h$. Set $\theta = \frac{\log(k_2/k_1)}{\log(k_2/k_0)}$. There is $L > 0$ such that for all $\lambda > 0$ there is $C > 0$ with
$$
\|\varphi\|_{k_1,\lambda} \leq C \|\varphi \|^{\theta}_{k_0, 0} \|\varphi \|^{1-\theta}_{k_2, L\lambda}, \qquad  \varphi \in \mathcal{U}_\omega(V_{h}).
$$
\end{lemma}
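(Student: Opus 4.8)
The plan is to combine the classical Hadamard three-lines theorem with the trivial estimate furnished by membership in $\mathcal{U}_\omega(V_h)$; no deep input is needed. First I would reduce everything to a pointwise bound: it suffices to choose $L>0$ (depending only on $k_0,k_1,k_2$) so that $|\varphi(z_0)|\,e^{\lambda\omega(x_0)}\le \|\varphi\|_{k_0,0}^{\theta}\,\|\varphi\|_{k_2,L\lambda}^{1-\theta}$ for every $\varphi\in\mathcal{U}_\omega(V_h)$ and every $z_0=x_0+iy_0\in V_{k_1}$ (so in fact $C=1$ works). Abbreviating $M_0=\|\varphi\|_{k_0,0}$ and $M_2=\|\varphi\|_{k_2,L\lambda}$, one has $M_0\le M_2$ since $V_{k_0}\subseteq V_{k_2}$ and $\omega\ge 0$. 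The one elementary observation that makes the argument run is that, by concavity of $\log$, $\theta=\frac{\log(k_2/k_1)}{\log(k_2/k_0)}<\frac{k_2-k_1}{k_2-k_0}$; consequently $L_\ast:=\max\{(1-\theta)^{-1},\,(1-\theta\tfrac{k_2-k_0}{k_2-k_1})^{-1}\}$ is finite, and I take $L\ge L_\ast$.

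When $|y_0|\le k_0$ the estimate is immediate: $|\varphi(z_0)|\le M_0$ and $|\varphi(z_0)|\le M_2 e^{-L\lambda\omega(x_0)}$, so raising these to the powers $\theta$ and $1-\theta$ and multiplying gives $|\varphi(z_0)|\le M_0^{\theta}M_2^{1-\theta}e^{-(1-\theta)L\lambda\omega(x_0)}$, whence the claim because $(1-\theta)L\ge 1$ and $\omega(x_0)\ge 0$. When $k_0<y_0<k_1$ (the case $-k_1<y_0<-k_0$ being symmetric, via the strip $\{-k_2\le\operatorname{Im} z\le -k_0\}$) I would apply Hadamard's three-lines theorem to $\varphi$ on the horizontal strip $\{\,k_0<\operatorname{Im} z<k_2\,\}$ — on whose closure $\varphi$ is holomorphic and bounded, indeed $|\varphi(x+iy)|\le M_2 e^{-L\lambda\omega(x)}\to 0$ as $x\to\pm\infty$ — with $\sup_{\operatorname{Im} z=k_0}|\varphi|\le M_0$ and $\sup_{\operatorname{Im} z=k_2}|\varphi|\le M_2$; this gives $|\varphi(z_0)|\le M_0^{\alpha}M_2^{1-\alpha}$ with $\alpha:=\frac{k_2-y_0}{k_2-k_0}$. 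Combining this with $|\varphi(z_0)|\le M_2 e^{-L\lambda\omega(x_0)}$ via $\min(a,b)\le a^{s}b^{1-s}$ and choosing $s=1-1/L$ so that the exponential factor $e^{-(1-s)L\lambda\omega(x_0)}=e^{-\lambda\omega(x_0)}$ exactly cancels the weight, I get $|\varphi(z_0)|\,e^{\lambda\omega(x_0)}\le M_0^{s\alpha}M_2^{1-s\alpha}$; since $\alpha>\frac{k_2-k_1}{k_2-k_0}$ and $L\ge L_\ast$ together force $s\alpha\ge\theta$, and $M_0\le M_2$, the right-hand side is $\le M_0^{\theta}M_2^{1-\theta}$, as required.

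I do not anticipate a genuine obstacle: the proof is short and essentially self-contained (it is the weighted Hadamard argument in the spirit of \cite[Proposition 3.1]{Langenbruch2012}, here in a particularly transparent form). The two points deserving a little care are the ones singled out above: $(i)$ the choice of $L$, which is possible precisely because of the strict inequality $\theta<(k_2-k_1)/(k_2-k_0)$; and $(ii)$ the Hadamard/Phragmén--Lindelöf step, which is painless because $\varphi$ decays faster than any exponential as $\operatorname{Re} z\to\pm\infty$ on $\overline{V_{k_2}}$, so no subtle growth control at infinity is required. This also clarifies the shape of the statement: $L$ may be taken as large as one likes (only $L\ge L_\ast$ is needed), and a large $L$ is exactly what permits spending only the small fraction $1/L$ of the interpolation weight on absorbing $e^{\lambda\omega}$ while keeping the exponent of $M_0$ above $\theta$ — correspondingly the constant $C$ can be taken to be $1$.
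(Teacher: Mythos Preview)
Your proof is correct and takes a genuinely different route from the paper's.  The paper applies the Hadamard three-\emph{circles} theorem to the translates $\varphi(x+\,\cdot\,)$ on discs $B(0,k_j)$, and then has to compare $\omega(x)$ with $\omega(\operatorname{Re}(x+w))$ for $|w|\le k_2$; this is precisely where the hypothesis \eqref{translation-invariant} enters, producing a constant $C=e^{\lambda L}$ depending on $\lambda$ and an $L$ determined by the bound $\omega(x+y)\le L(\omega(x)+1)$ for $|y|\le k_2$.  You instead apply the three-\emph{lines} theorem directly on the horizontal strip $\{k_0<\operatorname{Im} z<k_2\}$ and interpolate the resulting unweighted bound against the trivial weighted bound $|\varphi(z_0)|\le M_2 e^{-L\lambda\omega(x_0)}$.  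Because the real part is never shifted, the translation condition \eqref{translation-invariant} is not used at all; your $L$ is purely geometric (coming from the strict inequality $\theta<(k_2-k_1)/(k_2-k_0)$), and you obtain $C=1$.  In short, your argument is more elementary and strictly stronger---it shows the lemma holds for \emph{any} nonnegative weight $\omega$, with a constant independent of $\lambda$---while the paper's version stays closer to Langenbruch's template from \cite{Langenbruch2012} and makes visible the role the hypothesis would play in a more general weighted setting.
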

\begin{proof}
 Property \eqref{translation-invariant} implies that there is $L > 0$ such that
$$
\omega(x+y) \leq L (\omega(x)  + 1), \qquad x \in \R, |y| \leq k_2.
$$
For $k > 0$ we write $\| \, \cdot \, \|_{B(0,k)}  = \| \, \cdot \, \|_{L^\infty(B(0,k))}$. The Hadamard three-circles theorem yields that
$$
\| \varphi\|_{B(0,k_1)} \leq \| \varphi\|^\theta_{B(0,k_0)} \| \varphi\|^{1-\theta}_{B(0,k_2)}, \qquad \varphi \in \mathcal{O}(B(0,h)).
$$
Hence, for all $\varphi \in \mathcal{U}_\omega(V_{h})$,
\begin{align*}
\|\varphi\|_{k_1,\lambda} &\leq \sup_{x \in \R} \| \varphi(x + \, \cdot \, )\|_{B(0,k_1)} e^{\lambda \omega(x)} \\
&\leq  \sup_{x \in \R} \| \varphi(x + \, \cdot \, )\|^\theta_{B(0,k_0)}  \| \varphi(x + \, \cdot \, )\|^{1-\theta}_{B(0,k_2)}e^{\lambda \omega(x)} \\
&\leq  \|\varphi \|^{\theta}_{k_0, 0} \left (\sup_{x \in \R} \max_{w \in B(0,k_2)} |\varphi(x+w)|e^{\frac{\lambda}{1-\theta} \omega(x)}  \right)^{1-\theta} \\
&\leq e^{\lambda L} \|\varphi \|^{\theta}_{k_0, 0}  \|\varphi \|^{1-\theta}_{k_2, L\lambda/(1-\theta)}.
\end{align*}
\end{proof}
\subsection{Implicit sequence space  representations} 
Let $\beta = (\beta_n)_{n \in \N}$ be an increasing sequence of  positive numbers with $\lim_{n \to \infty}\beta_n = \infty$. We define  the \emph{power series space $\Lambda_\infty(\beta)$ of infinite type} as the Fr\'echet space consisting of all $(c_n)_{n \in \N} \in \C^{\N}$ such that for all $r > 0$
$$
\sup_{n \in \N} |c_n| e^{r\beta_n}  < \infty.
$$

The space $\Lambda_\infty(\beta)$ is nuclear if and only if $\log n= O(\beta_n)$ \cite[Proposition 29.6]{MV}. We set $s = \Lambda_\infty ( \log (e+n))$.
As mentioned in the introduction, the following result holds.
\begin{theorem}\label{basis}
Let $E$ be a nuclear Fr\'echet space satisfying $(\Omega)$ and $(DN)$. Then, $E$ is isomorphic to some power series space of infinite type.
\end{theorem}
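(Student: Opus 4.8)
The plan is to deduce the statement by stacking three known results: Vogt's structural characterisation of complemented subspaces of $s$, the recent theorem of Dronov and Kaplitskii on the existence of Schauder bases in such subspaces, and a classical identification, due to Bessaga, of those complemented subspaces of $s$ that carry a basis. No new analysis is needed; the argument is essentially a chain of citations, and the only thing to verify is that the hypotheses line up at each link.

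First I would use the characterisation (see \cite{MV}, going back to Vogt) that, for a nuclear Fr\'echet space $E$, the following are equivalent: $E$ is isomorphic to a complemented subspace of $s = \Lambda_\infty(\log(e+n))$, and $E$ satisfies both $(DN)$ and $(\Omega)$. Since by hypothesis $E$ is nuclear and has $(DN)$ and $(\Omega)$, this furnishes a topological isomorphism of $E$ onto a subspace of $s$ that is the range of a continuous projection $s \to s$.

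Next I would invoke the deep theorem of Dronov and Kaplitskii \cite{D-K}, which asserts that every complemented subspace of $s$ possesses a Schauder basis; applied to (the copy of) $E$ inside $s$, it produces such a basis for $E$. Finally, by the classical fact recorded in \cite{Bessaga}, every complemented subspace of $s$ that admits a Schauder basis is already isomorphic to a power series space $\Lambda_\infty(\beta)$ of infinite type, where $\beta = (\beta_n)_{n\in\N}$ may be taken increasing with $\beta_n \to \infty$. Combining these last two steps gives $E \cong \Lambda_\infty(\beta)$, which is the assertion. (Equivalently, one may quote in a single step the statement that a nuclear Fr\'echet space \emph{with basis} satisfying $(DN)$ and $(\Omega)$ is isomorphic to some $\Lambda_\infty(\beta)$; the route through \cite{D-K} is exactly what supplies the missing basis, which is the point where the hypotheses on $E$ alone are not enough by classical means.)

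Since all three inputs are quoted as black boxes, the proof contains no computation; the only delicate point is bookkeeping, namely checking that the hypotheses of the cited theorems are precisely ``$E$ nuclear Fr\'echet with $(DN)$ and $(\Omega)$'' for the first and ``complemented subspace of $s$'' for the latter two, so that the implications compose cleanly. The genuinely hard mathematics here — the affirmative solution of the basis problem for complemented subspaces of $s$ — lies entirely in \cite{D-K} and is not reproduced; that is the step I would flag as the substantive obstacle, now removed by the work cited.
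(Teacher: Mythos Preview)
Your proposal is correct and follows essentially the same three-step argument as the paper: embed $E$ as a complemented subspace of $s$ via the $(DN)$--$(\Omega)$ characterisation from \cite{MV}, apply \cite{D-K} to obtain a Schauder basis, and then conclude with \cite{Bessaga}. The only cosmetic difference is that the paper notes the basis is absolute, which is automatic in the nuclear setting.
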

\begin{proof}
By \cite[Proposition 31.7]{MV}, $E$ is isomorphic to a complemented subspace of $s$. Hence, \cite[Theorem 2, p.\ 1474]{D-K} yields that $E$ admits an (absolute) Schauder basis. The result now follows from \cite[Corollary 3.4]{Bessaga}.
\end{proof}
Proposition \ref{Omega}, Proposition \ref{DN}, and Theorem \ref{basis} yield the following result.
\begin{theorem}\label{main-1}
Let $\omega$ be a weight function satisfying $(A)_\infty$. Then, $\mathcal{U}_\omega(\C)$ is isomorphic to some power series space $\Lambda_\infty(\beta)$ of infinite type.
\end{theorem}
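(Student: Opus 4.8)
The plan is to deduce the statement directly from Theorem \ref{basis}, so it suffices to verify that $\mathcal{U}_\omega(\C)$ is a nuclear Fréchet space satisfying $(\Omega)$ and $(DN)$. Nuclearity has already been recorded in Section 2 as a consequence of the condition $\log t = o(\omega(t))$ built into the definition of a weight function, so nothing new is needed there. For $(\Omega)$, I would apply Proposition \ref{Omega} with $h = \infty$: since $V_\infty = \C$ and, by Remark \ref{remark-ti}, the hypothesis $(A)_\infty$ is exactly the condition required, we get that $\mathcal{U}_\omega(\C) = \mathcal{U}_\omega(V_\infty)$ satisfies $(\Omega)$.

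For $(DN)$ I would invoke Proposition \ref{DN}, which asks that $\omega$ satisfy \eqref{translation-invariant}. The one implication worth making explicit is that $(A)_\infty$ forces \eqref{translation-invariant}: by Lemma \ref{sup-bounds}, $(A)_\infty$ yields the pointwise bound \eqref{pointwise} for \emph{every} $\mu > 0$, and specializing to one fixed value of $\mu$ gives precisely \eqref{translation-invariant}. (Equivalently, one reads this off Remark \ref{remark-ti}: $(A)_\infty$ is equivalent to \eqref{general}, which trivially implies \eqref{translation-invariant}.) Hence Proposition \ref{DN} applies and $\mathcal{U}_\omega(\C)$ satisfies $(DN)$. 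Combining nuclearity, $(\Omega)$, and $(DN)$, Theorem \ref{basis} produces an isomorphism of $\mathcal{U}_\omega(\C)$ onto some power series space $\Lambda_\infty(\beta)$ of infinite type, which is the assertion.

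I do not expect a real obstacle here: the genuinely hard work has already been carried out in the $\overline{\partial}$-decomposition of Proposition \ref{decomp} together with the holomorphic cut-offs of Proposition \ref{cut-off} (the technical core behind $(\Omega)$) and in the weighted Hadamard three-lines estimate of Lemma \ref{three-lines} (behind $(DN)$); given those inputs and the structural Theorem \ref{basis}, the present theorem is essentially a bookkeeping step. The only mildly delicate point, and the one I would spell out explicitly in the write-up, is the implication $(A)_\infty \Rightarrow \eqref{translation-invariant}$ noted above, which is what lets Proposition \ref{Omega} and Proposition \ref{DN} be applied simultaneously under the single hypothesis $(A)_\infty$.
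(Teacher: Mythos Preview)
Your proposal is correct and follows exactly the paper's approach: the theorem is stated there as an immediate consequence of Proposition~\ref{Omega}, Proposition~\ref{DN}, and Theorem~\ref{basis}. Your explicit remark that $(A)_\infty$ implies \eqref{translation-invariant} (via Lemma~\ref{sup-bounds} or Remark~\ref{remark-ti}) is a helpful clarification that the paper leaves implicit.
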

\section{Explicit sequence space representations}\label{ssr}
Let $\omega$ be a weight function. We define
$$
\omega^* := (s\omega^{-1}(s))^{-1}.
$$
Note that $\omega^*$ is a weight function satisfying $(\alpha)$ (even if $\omega$ itself does not satisfy $(\alpha)$). Moreover, we have that \cite[Equation $(3.1)$]{Langenbruch2016}
\begin{equation}
\omega^*(t) = \frac{t}{(s\omega(s))^{-1}(t)}, \qquad t \geq 0.
\label{l-repr}
\end{equation}
The goal of this section is to show the following result.
\begin{theorem}\label{main}
Let $\omega$ be a weight function satisfying $(\alpha)$. Then, $\mathcal{U}_\omega(\C) \cong \Lambda_\infty(\omega^*(n))$.
\end{theorem}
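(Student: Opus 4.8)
The plan is to combine Theorem \ref{main-1} with a computation of the diametral dimension of $\mathcal{U}_\omega(\C)$. Since condition $(\alpha)$ implies $(A)_\infty$, Theorem \ref{main-1} gives $\mathcal{U}_\omega(\C) \cong \Lambda_\infty(\beta)$ for some exponent sequence $\beta$, so it remains to show $\Lambda_\infty(\beta) \cong \Lambda_\infty(\omega^*(n))$. Recall that $\omega^*$ is again a weight function satisfying $(\alpha)$; from $\omega(2t) = O(\omega(t))$ one gets that $\omega$, and hence $\omega^*$, grows at most polynomially, so $\log n = O(\omega^*(n))$ and $\Lambda_\infty(\omega^*(n))$ is a nuclear power series space of infinite type. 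By the classical isomorphic classification of such spaces (Mityagin; see \cite[Chapter 29]{MV}), $\Lambda_\infty(\beta) \cong \Lambda_\infty(\omega^*(n))$ is equivalent to the $O$-equivalence of $\beta$ and $(\omega^*(n))_n$, and to the equality of the diametral dimensions $\Delta(\Lambda_\infty(\beta)) = \Delta(\Lambda_\infty(\omega^*(n))) = \Lambda'_\infty(\omega^*(n))$. Since the diametral dimension is a linear topological invariant, the theorem reduces to proving $\Delta(\mathcal{U}_\omega(\C)) = \Lambda'_\infty(\omega^*(n))$.

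For this I would use the short-time Fourier transform $V_g$. Fix an admissible window $g$ lying in a sufficiently small Gelfand--Shilov class, so that $g$ is simultaneously a good window for $\mathcal{U}_\omega(\C)$ and for the $(LB)$-space $\mathcal{A}_\omega(\R)$ whose dual $\mathcal{A}'_\omega(\R)$ is the space of Fourier hyperfunctions of fast decay studied in \cite{Langenbruch2016}. Using a Gabor frame on a lattice $\Lambda = a\Z \times b\Z \subseteq \R^2 \cong \C$, the analysis map $\varphi \mapsto (V_g\varphi(\lambda))_{\lambda \in \Lambda}$ and the corresponding synthesis map realize $\mathcal{U}_\omega(\C)$ (respectively $\mathcal{A}'_\omega(\R)$) as a complemented subspace of an explicit weighted sequence space over $\Lambda$. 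The key point is that the description ``$\sup_{z \in V_n}|\varphi(z)| e^{n\omega(\operatorname{Re} z)} < \infty$ for all $n$'' of $\mathcal{U}_\omega(\C)$ translates under $V_g$ into decay of order $e^{-n\omega(x)}$ in the space variable together with suitable control in the frequency variable, which is exactly the pattern dual to the one exhibited by $\mathcal{A}'_\omega(\R)$; in particular the two sequence-space models carry the same Köthe matrix, up to passing from a Fréchet structure to its $(DF)$ counterpart. As the diametral dimension is invariant under isomorphism and under the passage to (and from) a complemented subspace of a power series space of infinite type, and since for the pair of Köthe spaces at hand the diametral dimension of the Fréchet member equals the generalized diametral dimension of its $(DF)$ partner, one obtains $\Delta(\mathcal{U}_\omega(\C)) = \Delta_{\mathrm{gen}}(\mathcal{A}'_\omega(\R))$.

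Finally, \cite{Langenbruch2016} computes $\Delta_{\mathrm{gen}}(\mathcal{A}'_\omega(\R))$ and identifies it with $\Lambda'_\infty$ of the exponent sequence naturally associated to $\omega$; using the representation $\omega^*(t) = t/(s\omega(s))^{-1}(t)$ of \eqref{l-repr}, this exponent sequence coincides with $(\omega^*(n))_n$ up to $O$-equivalence. Hence $\Delta(\mathcal{U}_\omega(\C)) = \Lambda'_\infty(\omega^*(n))$, and combining with the first paragraph yields $\mathcal{U}_\omega(\C) \cong \Lambda_\infty(\omega^*(n))$.

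The main obstacle will be the second step: setting up the Gabor-analytic machinery so that the weighted sequence-space models for $\mathcal{U}_\omega(\C)$ and for $\mathcal{A}'_\omega(\R)$ are genuinely comparable. Concretely, one must verify that a Gabor frame exists whose analysis and synthesis operators are bounded between the relevant weighted spaces, which requires absorbing the cross terms appearing in the estimates for $|V_g\varphi(\lambda)|$ --- this is precisely where condition $(\alpha)$, i.e.\ $\omega(2t) = O(\omega(t))$, is used --- and one must reconcile the Fréchet space $\mathcal{U}_\omega(\C)$ with the $(DF)$-space $\mathcal{A}'_\omega(\R)$, using the appropriate notion of generalized diametral dimension so that the computation of \cite{Langenbruch2016} transfers. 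The remaining ingredients --- the classification of power series spaces of infinite type, the polynomial growth bounds for $\omega$ and $\omega^*$, and the final sequence-space bookkeeping via \eqref{l-repr} --- are routine.
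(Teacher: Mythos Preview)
Your overall architecture---reduce to computing $\Delta(\mathcal{U}_\omega(\C))$, use the STFT to link $\mathcal{U}_\omega(\C)$ with $\mathcal{A}'_\omega(\R)$, and then invoke \cite{Langenbruch2016}---matches the paper's. But the central step of your argument does not go through as stated. You assert that the sequence-space models of $\mathcal{U}_\omega(\C)$ and $\mathcal{A}'_\omega(\R)$ obtained via Gabor analysis ``carry the same K\"othe matrix, up to passing from a Fr\'echet structure to its $(DF)$ counterpart'', and from this you derive $\Delta(\mathcal{U}_\omega(\C)) = \widetilde{\Delta}(\mathcal{A}'_\omega(\R))$. This is not correct: both spaces are Fr\'echet (not a Fr\'echet/$(DF)$ pair), and the STFT images are genuinely different. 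For $\mathcal{U}_\omega(\C)$ the image sits in $C_\omega(\R^2)$, governed by weights $e^{h\omega(x)+h|\xi|}$ as $h\to\infty$, while for $\mathcal{A}'_\omega(\R)$ the image sits in $\widetilde{C}_\omega(\R^2)$, governed by weights $e^{h\omega(x)-|\xi|/h}$ as $h\to\infty$. These K\"othe matrices are not the same, and the complemented-subspace realizations you describe give only $\Delta(K_i)\subseteq\Delta(\text{subspace})$, which is the wrong direction for an upper bound on $\Delta(\mathcal{U}_\omega(\C))$.

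The paper handles the two inclusions by completely different mechanisms. For the upper bound $\Delta(\mathcal{U}_\omega(\C))\subseteq\Lambda'_\infty(\omega^*(n))$ it does \emph{not} compare the two K\"othe models directly; instead it uses the natural inclusion $d:\mathcal{U}_\omega(\C)\hookrightarrow\mathcal{A}'_\omega(\R)$ and the generalized $(\Omega)$ condition for the triple $(d,\mathcal{U}_\omega(\C),\mathcal{A}'_\omega(\R))$, which by \cite[Lemma~1.3(a)]{Langenbruch1987} yields $\Delta(\mathcal{U}_\omega(\C))\subseteq\widetilde{\Delta}(\mathcal{A}'_\omega(\R))$. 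The triple-$(\Omega)$ is verified by lifting through the surjective adjoint STFT $V^*_\psi$ to the elementary inclusion $C_\omega(\R^2)\hookrightarrow\widetilde{C}_\omega(\R^2)$, where it follows from a cut-off argument. The lower bound $\Lambda'_\infty(\omega^*(n))\subseteq\Delta(\mathcal{U}_\omega(\C))$ is obtained separately and does not use $\mathcal{A}'_\omega(\R)$ at all: one embeds $\mathcal{U}_\omega(\C)$ as a closed subspace of the K\"othe space with weights $e^{r(n+\omega(j))}$ via $\varphi\mapsto(\varphi(\,\cdot+j))_{j\in\Z}$, and then quotes the estimate $\omega^*(n)=O(\beta_n)$ for the increasing rearrangement $\beta$ of $\{n+\omega(j)\}$ from \cite{Langenbruch2016}. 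Your proposal is missing both the triple-$(\Omega)$ device for the upper bound and any argument for the lower bound.
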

We need various results in preparation for the proof of Theorem \ref{main}.
\subsection{The short-time Fourier transform}\label{STFT}
The \emph{short-time Fourier transform} (STFT) of $f \in L^{2}(\mathbb{R})$ with respect to the window $\psi \in L^{2}(\mathbb{R})$ is given by
	\[ V_{\psi} f(x, \xi) := \int_{\mathbb{R}} f(t) \overline{\psi(t - x)} e^{- 2 \pi i \xi  t} dt, \qquad (x, \xi) \in \mathbb{R}^{2}. \]
The adjoint of the bounded linear mapping $V_{\psi}: L^{2}(\mathbb{R})\to L^{2}(\mathbb{R}^2)$ is given by the weak integral
	\[ V_{\psi}^{*} F(t) =  \iint _{\mathbb{R}^{2}} F(x, \xi) e^{2 \pi i \xi  t}\psi(t-x) dx d\xi, \qquad F \in L^2(\R^2) . \]
Furthermore, if $\gamma \in L^2(\R)$ is such $(\gamma, \psi)_{L^{2}} \neq 0$, then the reconstruction formula
	\begin{equation} 
		\label{eq:reconstructSTFT}
		\frac{1}{(\gamma, \psi)_{L^{2}}} V_{\gamma}^{*} \circ V_{\psi} = {\operatorname{id}}_{L^{2}(\mathbb{R})} 
	\end{equation}
holds. We refer to \cite{Grochenig} for more information on the STFT.

 Our first goal is to establish the mapping properties of the STFT on  $\mathcal{U}_{\omega}(\C)$.  Let $\omega$ be a weight function. For $h, \lambda \in \R$ we define $C_{\omega,\lambda,h}(\R^2)$ as the Banach space consisting of all $f \in C(\R^2)$ such that
$$
\|f\|_{C_{\omega,\lambda,h}(\R^2)} := \sup_{(x,\xi) \in \R^2} |f(x,\xi)|e^{\lambda\omega(x) + h|\xi|} < \infty.
$$
 We define the Fr\'echet space
$$
C_{\omega}(\R^2) := \varprojlim_{h \to \infty} C_{\omega,h,h}(\R^2). 
$$
For $\varphi \in \mathcal{O}(\C)$ we define $\varphi^*(z) := \overline{\varphi(\overline{z})}$, $z \in \C$. Then, $\varphi^* \in  \mathcal{O}(\C)$ and $\varphi^*(x) = \overline{\varphi(x)}$ for $x \in \R$. If $\varphi \in \mathcal{U}_{\omega}(\C)$, then $\varphi^* \in \mathcal{U}_{\omega}(\C)$ and 
$\| \varphi^* \|_{\mathcal{A}_{\omega, \lambda} (V_h)} = \| \varphi \|_{\mathcal{A}_{\omega, \lambda} (V_h)}$ for all $h> 0$ and $\lambda \in \R$.
\begin{proposition} \label{STFT-test} Let $\omega$ be a weight function satisfying $(\alpha)$. Fix a window $\psi \in \mathcal{U}_{\omega}(\C)$.
The linear mappings 
$$
V_{\psi}: \mathcal{U}_{\omega}(\C) \rightarrow C_{\omega}(\R^2)\quad \mbox{ and } \quad V^{\ast}_{\psi}:C_{\omega}(\R^2) \to \mathcal{U}_{\omega}(\C)
$$
are continuous.
\end{proposition}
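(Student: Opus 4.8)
The plan is to verify the two continuity statements separately by exhibiting, for each target seminorm, a bound in terms of finitely many seminorms of the input. Throughout I will use that $\omega$ satisfies $(\alpha)$, hence also $(A)_\infty$ by Remark \ref{remark-ti}, so that the pointwise estimate \eqref{pointwise} holds for every $\mu > 0$; in particular $\omega(x+t) \leq C_\mu e^{\mu|t|}(\omega(x)+1)$ for all $x,t \in \R$, which is the ingredient that lets me trade spatial translations against exponential factors. I will also freely use the subadditivity-type estimate $\omega(x-t) \geq \frac{1}{C}\omega(x) - \omega(t)$ coming from \eqref{general-alpha}, to bound $e^{\lambda\omega(x)}$ when $x$ is the integration variable.

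\emph{Continuity of $V_\psi$.} Fix $h > 0$; I must bound $\|V_\psi\varphi\|_{C_{\omega,h,h}(\R^2)} = \sup_{(x,\xi)}|V_\psi\varphi(x,\xi)|e^{h\omega(x)+h|\xi|}$ by a seminorm of $\varphi$. Since $\psi \in \mathcal{U}_\omega(\C)$ is entire with rapid decay on strips, for each $k>0$ and $\lambda>0$ we have $|\psi(t-x)| \leq \|\psi\|_{k,\lambda}e^{-\lambda\omega(t-x)}$ on $\R$. The key point is that $\varphi$ is entire, so I may shift the contour in $t$ into the complex: write $V_\psi\varphi(x,\xi) = \int_\R \varphi(t)\overline{\psi(t-x)}e^{-2\pi i\xi t}\,dt$ and deform $t \mapsto t + i\sigma\,\mathrm{sgn}(\xi)$ for a suitable $\sigma \in (0,k)$ depending on the target index $h$. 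This deformation is justified by Cauchy's theorem together with the decay of both $\varphi$ and $\psi^*$ on the strip $V_k$, and it produces the factor $e^{-2\pi\sigma|\xi|}$, which (choosing $\sigma$ large enough) dominates $e^{h|\xi|}$. On the shifted contour one estimates $|\varphi(t+i\sigma\,\cdot\,)| \leq \|\varphi\|_{k,\lambda}e^{-\lambda\omega(t)}$ and $|\psi(t-x+i\sigma\,\cdot\,)| \leq \|\psi\|_{k,\mu}e^{-\mu\omega(t-x)}$; using $\omega(t-x) \geq \frac{1}{C}\omega(x) - \omega(t)$ and integrating in $t$ against $e^{-\lambda\omega(t)}e^{\mu\omega(t)}$ (taking $\lambda>\mu$ so this is an integrable function of $t$ by the non-triviality hypothesis, which here follows from $(A)_\infty$) yields the bound $\lesssim \|\varphi\|_{k,\lambda}e^{-(\mu/C)\omega(x)}$. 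Choosing $\mu$ large enough that $\mu/C \geq h$ finishes this half, and the required seminorm of $\varphi$ is $\|\varphi\|_{k,\lambda}$ with $k,\lambda$ determined by $h$.

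\emph{Continuity of $V_\psi^*$.} Here I am given $F \in C_\omega(\R^2)$, i.e. $|F(x,\xi)| \leq \|F\|_{C_{\omega,h,h}}e^{-h\omega(x)-h|\xi|}$ for every $h$, and I must show $V_\psi^*F(t) = \iint F(x,\xi)e^{2\pi i\xi t}\psi(t-x)\,dx\,d\xi$ extends to an entire function lying in every $\mathcal{A}_{\omega,\lambda}(V_k)$. Entirety is immediate once absolute convergence is established for complex $t$: replace $t$ by $t+is$ with $|s|<k$; then $|e^{2\pi i\xi(t+is)}| = e^{-2\pi\xi s}$ which is at worst $e^{2\pi k|\xi|}$, harmless against $e^{-h|\xi|}$ for $h$ large, and $|\psi(t+is-x)| \leq \|\psi\|_{k',\mu}e^{-\mu\omega(t-x)}$ once $|s|<k'$. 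To get the weight $e^{\lambda\omega(t)}$ one again uses $\omega(t-x) \geq \frac1C\omega(t)-\omega(x)$, so $e^{\lambda\omega(t)}|\psi(t-x)| \lesssim e^{\lambda\omega(x)}e^{-(\mu/C-\lambda)\omega(t-x)}$ roughly speaking; pairing the resulting $e^{\lambda\omega(x)}$ against the decay $e^{-h\omega(x)}$ of $F$ with $h>\lambda$ makes the $x$-integral converge, and the $\xi$-integral converges trivially. Holomorphy in $t$ on $V_k$ then follows by Morera/Fubini or by differentiating under the integral sign. The seminorm estimate $\|V_\psi^*F\|_{k,\lambda} \lesssim \|F\|_{C_{\omega,h,h}}$ for appropriate $h = h(k,\lambda)$ reads off from these computations.

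\emph{Main obstacle.} The routine part is the bookkeeping of which $\mu$'s beat which $h$'s; the genuinely delicate step is the \emph{contour shift} in the proof of continuity of $V_\psi$. One must check that the integrand $\varphi(t)\overline{\psi(t-x)}e^{-2\pi i\xi t}$, as a function of complex $t$, decays fast enough along the vertical segments joining the real axis to the shifted line $\mathrm{Im}\,t = \sigma\,\mathrm{sgn}(\xi)$ so that those pieces vanish in the limit $\mathrm{Re}\,t \to \pm\infty$ — this is exactly where the rapid decay on \emph{every} strip (not merely on $\R$) of $\varphi$ and of $\psi^* \in \mathcal{U}_\omega(\C)$ is used, and where the hypothesis $\psi \in \mathcal{U}_\omega(\C)$ (rather than merely $\psi \in L^2$) is essential. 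Once the shift is justified, everything else is the estimate-chasing sketched above.
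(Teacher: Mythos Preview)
Your proposal is correct and follows essentially the same route as the paper: a contour shift via Cauchy's theorem to manufacture the exponential decay in $\xi$ for $V_\psi$, the subadditivity inequality from $(\alpha)$ to handle the $\omega$-weight in $x$, and direct estimation under the integral sign for $V_\psi^*$. One minor slip worth fixing: with the shift $t \mapsto t + i\sigma\,\mathrm{sgn}(\xi)$ you get $e^{-2\pi i\xi(t+i\sigma\,\mathrm{sgn}(\xi))} = e^{-2\pi i\xi t}e^{+2\pi\sigma|\xi|}$, i.e.\ growth rather than decay, so you want $t \mapsto t - i\sigma\,\mathrm{sgn}(\xi)$ as in the paper; this is a sign error, not a structural gap.
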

\begin{proof} Since $\omega$ satisfies $(\alpha)$, there is $L > 0$ such that
\begin{equation}
\omega(x+y) \leq L(\omega(x) + \omega(y) + 1), \qquad x,y \in \R.
\label{alphaa}
\end{equation}
We first consider $V_\psi$. Fix $h > 0$ and let $\varphi \in \mathcal{U}_\omega(\C)$ be arbitrary.  By Cauchy's integral theorem we have that
\begin{align*}
V_\psi \varphi(x,\xi) &=  \int_{\mathbb{R}} \varphi(t) \psi^*(t - x) e^{- 2 \pi i \xi  t} dt \\
&= \int_{\R} \varphi(t - i \operatorname{sgn}(\xi)h) \psi^*(t - x - i \operatorname{sgn}(\xi) h) e^{- 2 \pi i \xi  (t - i \operatorname{sgn}(\xi) h)} dt.
\end{align*}
Hence,
\begin{align*}
|V_\psi \varphi(x,\xi)| &\leq e^{-2\pi h |\xi|} \int_{\R} |\varphi(t - i \operatorname{sgn}(\xi)h)| |\psi^*(t - x - i \operatorname{sgn}(\xi) h)| dt \\
&\leq  e^{Lh}\int_{\R}e^{-Lh\omega(t)} dt \| \psi \|_{h, Lh}\| \varphi \|_{h,2Lh} e^{- h\omega(x) -2\pi h |\xi|}.
\end{align*}
This shows that $V_\psi$ is continuous. Next, we treat $V^*_\psi$. Fix $h > 0$ and  let $F \in C_\omega(\R^2)$ be arbitrary.  Note that
$$
V_{\psi}^{*} F(t+iu) =  \iint _{\mathbb{R}^{2}} F(x, \xi) e^{2 \pi i \xi  (t+iu)}\psi(t+iu-x) dx d\xi, \qquad t+iu \in \C,
$$
is an entire function. For all $t+iu \in V_h$ it holds that
\begin{align*}
|V_{\psi}^{*} F(t+iu)| &\leq \iint _{\mathbb{R}^{2}} |F(x, \xi)| e^{2 \pi h |\xi|} |\psi(t+iu-x)| dx d\xi \\
& \leq e^{Lh}\int_{\R}e^{-Lh\omega(x)} dx \int_\R e^{-\pi h |\xi|} d\xi \| \psi \|_{h,Lh}\| F \|_{\mathcal{C}_{\omega,2Lh, 3\pi h} (\R^2)} e^{- h\omega(t)}.
\end{align*}
This shows that $V^*_\psi$ is continuous. 
\end{proof}
\begin{remark}
Let $\omega$ be a weight function satisfying $(\alpha)$. Proposition \ref{STFT-test} and \eqref{eq:reconstructSTFT} imply that $\mathcal{U}_{\omega}(\C)$ is isomorphic to a complemented subspace of $C_{\omega}(\R^2)$. A standard argument involving cut-off functions shows that $C_{\omega}(\R^2)$ satisfies $(\Omega)$, while it is clear that $C_{\omega}(\R^2)$ satisfies $(DN)$. Since  $(\Omega)$ and $(DN)$ are inherited to complemented subspaces, we obtain a simple alternative proof of the fact that $\mathcal{U}_{\omega}(\C)$ satisfies $(\Omega)$ and $(DN)$ (compare with Proposition \ref{Omega} and Proposition \ref{DN}).
\end{remark}
Let $\omega$ be a weight function. We define the following $(LB)$-space
$$
\mathcal{A}_{\omega}(\R) := \varinjlim_{h \to \infty} \mathcal{A}_{\omega,-h} (V_{1/h}).
$$
Note that $\mathcal{U}_\omega(\C) \subset \mathcal{A}_{\omega}(\R)$. If $\omega$ satisfies $(\alpha)$, $\mathcal{A}_{\omega}(\R)$ coincides with the space $\mathcal{H}^\infty_{\omega}(\R)$ from \cite[p.\ 91]{Langenbruch2016}. We define $\mathcal{A}'_{\omega}(\R) = (\mathcal{A}_{\omega}(\R))'_b$. We may view $\mathcal{U}_{\omega}(\C)$ as a vector  subspace of $\mathcal{A}'_{\omega}(\R)$ by identifying $\varphi \in \mathcal{U}_{\omega}(\C)$ with the element of $\mathcal{A}'_{\omega}(\R)$ given by
$$
\langle \varphi, \psi \rangle := \int_{\R} \varphi(x) \psi(x) dx, \qquad \psi \in \mathcal{A}_{\omega}(\R).
$$
The inclusion mapping $\mathcal{U}_{\omega}(\C) \rightarrow \mathcal{A}'_{\omega}(\R)$ is continuous. We now extend the STFT and its adjoint to $\mathcal{A}'_{\omega}(\R)$. To this end, we define the Fr\'echet space
$$
\widetilde{C}_{\omega}(\R^2) := \varprojlim_{h \to \infty} C_{\omega,h,-1/h}(\R^2),
$$
The STFT of  $f \in \mathcal{A}'_{\omega}(\R)$ with respect to the window $\psi \in \mathcal{U}_{\omega}(\C)$ is defined as
$$
V_{\psi} f(x, \xi) := \langle f(t), \psi^*(t - x) e^{- 2 \pi i \xi  t} \rangle, \qquad (x, \xi) \in \mathbb{R}^{2}. 
$$
Clearly, $V_\psi f$ is a continuous function on $\R^2$. We define the adjoint STFT of $F \in \widetilde{C}_{\omega}(\R^2) $ as
$$
\langle V^*_\psi F, \varphi \rangle :=  \iint _{\mathbb{R}^{2}} F(x, \xi) V_{\psi^*} \varphi(x,-\xi) dx d\xi, \qquad \varphi \in \mathcal{A}_{\omega}(\R).
$$
 \begin{proposition} \label{STFT-dual-2} Let $\omega$ be a weight function satisfying $(\alpha)$. Fix a window $\psi \in \mathcal{U}_{\omega}(\C)$.
The linear mappings 
$$
V_{\psi}: \mathcal{A}'_{\omega}(\R) \rightarrow \widetilde{C}_{\omega}(\R^2)\quad \mbox{ and } \quad V^{\ast}_{\psi}:\widetilde{C}_{\omega}(\R^2) \to \mathcal{A}'_{\omega}(\R)
$$
are continuous. Furthermore, if $\gamma \in \mathcal{U}_{\omega}(\C)$ is such that  $(\gamma, \psi)_{L^{2}} \neq 0$, then the reconstruction formula
\begin{equation}
\frac{1}{(\gamma, \psi)_{L^{2}}} V_{\gamma}^{*} \circ V_{\psi} = {\operatorname{id}}_{\mathcal{A}'_{\omega}(\R)} 
\label{reconstruction-dual-2}
\end{equation}
holds.
\end{proposition}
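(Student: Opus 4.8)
The plan is to prove the two continuity assertions by direct weighted estimates, and then to deduce the reconstruction formula \eqref{reconstruction-dual-2} from the classical one \eqref{eq:reconstructSTFT} by a weak-$*$ density argument. Throughout one uses condition $(\alpha)$ in the form \eqref{alphaa}, the fact that $\psi$ and $\psi^* \in \mathcal{U}_{\omega}(\C)$ have finite norm $\| \, \cdot \, \|_{k,\lambda}$ for \emph{every} $k > 0$ and $\lambda \in \R$, and that $\int_{\R} e^{-c\omega(x)}\dx < \infty$ for all $c > 0$ (a consequence of $\log t = o(\omega(t))$).

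\emph{Continuity of $V_{\psi} : \mathcal{A}'_{\omega}(\R) \to \widetilde{C}_{\omega}(\R^2)$.} Fix $h > 0$ and write $g_{x,\xi}(t) := \psi^*(t - x)e^{-2\pi i \xi t}$, so that $V_{\psi}f(x,\xi) = \langle f, g_{x,\xi}\rangle$ and
$$
\| V_{\psi}f\|_{C_{\omega, h, -1/h}(\R^2)} = \sup_{(x,\xi) \in \R^2} \bigl| \langle f, e^{h\omega(x) - |\xi|/h} g_{x,\xi}\rangle \bigr|.
$$
Since $\psi^* \in \mathcal{U}_{\omega}(\C)$, each $g_{x,\xi}$ extends to an entire function and hence lies in every Banach step $\mathcal{A}_{\omega, -h'}(V_{1/h'})$ of $\mathcal{A}_{\omega}(\R)$. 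On $V_{1/h'}$ one has $|e^{-2\pi i\xi z}| \leq e^{2\pi |\xi|/h'}$; bounding $|\psi^*(z-x)| \leq \| \psi\|_{1/h', h'} e^{-h'\omega(\operatorname{Re} z - x)}$ and using \eqref{alphaa} (so that $\omega(\operatorname{Re} z - x) + \omega(\operatorname{Re} z) \geq \omega(x)/L - 1$) yields
$$
\| g_{x,\xi}\|_{1/h', -h'} \leq C e^{-(h'/L)\omega(x) + 2\pi |\xi|/h'}.
$$
Choosing $h' := \max(L, 2\pi)\, h$ makes $\{ e^{h\omega(x) - |\xi|/h} g_{x,\xi} : (x,\xi) \in \R^2\}$ a bounded subset of $\mathcal{A}_{\omega, -h'}(V_{1/h'})$, hence of $\mathcal{A}_{\omega}(\R)$. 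Therefore the supremum above is finite and is one of the canonical continuous seminorms of $\mathcal{A}'_{\omega}(\R) = (\mathcal{A}_{\omega}(\R))'_b$; this is the asserted continuity, and the same argument of course applies with the window $\gamma$.

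\emph{Continuity of $V^*_{\psi} : \widetilde{C}_{\omega}(\R^2) \to \mathcal{A}'_{\omega}(\R)$.} I would first estimate $|V_{\psi^*}\varphi(x, -\xi)|$ for $\varphi$ in a fixed step $\mathcal{A}_{\omega, -h_0}(V_{1/h_0})$ of $\mathcal{A}_{\omega}(\R)$. Shifting the contour in the defining integral into the strip $V_{1/(2h_0)}$ produces a factor $e^{-\pi |\xi|/h_0}$, while $|\varphi(t)| \leq \| \varphi\|_{1/h_0, -h_0} e^{h_0\omega(t)}$, the rapid decay of $\psi^*$, \eqref{alphaa}, and the integrability of $e^{-c\omega}$ give $\int_{\R} |\varphi(t)\psi^*(t-x)|\dt \leq C\| \varphi\|_{1/h_0, -h_0} e^{h_0 L \omega(x)}$; hence
$$
|V_{\psi^*}\varphi(x, -\xi)| \leq C\| \varphi\|_{1/h_0, -h_0}\, e^{h_0 L\omega(x) - \pi |\xi|/h_0}.
$$
Inserting this into the definition of $V^*_{\psi}$ and bounding $|F(x,\xi)| \leq \|F\|_{C_{\omega, h, -1/h}(\R^2)} e^{-h\omega(x) + |\xi|/h}$ one finds, as soon as $h > h_0 L$ and $h > h_0/\pi$,
$$
|\langle V^*_{\psi}F, \varphi\rangle| \leq C \|F\|_{C_{\omega, h, -1/h}(\R^2)} \| \varphi\|_{1/h_0, -h_0} \iint_{\R^2} e^{-(h - h_0 L)\omega(x)}\, e^{-(\pi/h_0 - 1/h)|\xi|}\,\dx \,\dxi < \infty.
$$
This shows simultaneously that $V^*_{\psi}F$ is a well-defined element of $\mathcal{A}'_{\omega}(\R)$ (it is bounded on each step) and, since $\mathcal{A}_{\omega}(\R)$ is a regular $(LB)$-space (in fact a $(DFS)$-space), so that every bounded set is contained and bounded in one step, that $F \mapsto V^*_{\psi}F$ is continuous into $(\mathcal{A}_{\omega}(\R))'_b$.

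\emph{The reconstruction formula.} Every $\varphi \in \mathcal{U}_{\omega}(\C)$ lies in $L^2(\R)$ (indeed in $\mathcal{A}_{\omega}(\R)$), the STFT of $\varphi$ in the sense of $\mathcal{A}'_{\omega}(\R)$ coincides with the ordinary $L^2$-STFT under the identification $\langle \varphi, \, \cdot \, \rangle = \int_{\R}\varphi(x) \, \cdot \, \dx$, and—using $V_{\psi}\varphi \in C_{\omega}(\R^2) \subseteq \widetilde{C}_{\omega}(\R^2)$ by Proposition \ref{STFT-test}—the same holds for $V^*_{\gamma}$ evaluated on $V_{\psi}\varphi$; hence \eqref{eq:reconstructSTFT} gives $\frac{1}{(\gamma, \psi)_{L^2}} V^*_{\gamma}V_{\psi}\varphi = \varphi$ for every $\varphi \in \mathcal{U}_{\omega}(\C)$. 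The subspace $\mathcal{U}_{\omega}(\C)$ is weak-$*$ dense in $\mathcal{A}'_{\omega}(\R)$: fixing $\chi \in \mathcal{U}_{\omega}(\C) \setminus \{0\}$, one has $e^{-2\pi i\xi z}\chi(z) \in \mathcal{U}_{\omega}(\C)$ for every $\xi \in \R$, so any $\phi \in \mathcal{A}_{\omega}(\R)$ annihilating $\mathcal{U}_{\omega}(\C)$ would have vanishing Fourier transform of $\chi\phi$, whence $\phi = 0$. Since continuous linear maps are automatically weak--weak continuous, the maps $\frac{1}{(\gamma, \psi)_{L^2}} V^*_{\gamma} \circ V_{\psi}$ and $\operatorname{id}_{\mathcal{A}'_{\omega}(\R)}$ are both $\sigma(\mathcal{A}'_{\omega}(\R), \mathcal{A}_{\omega}(\R))$-continuous and agree on the $\sigma(\mathcal{A}'_{\omega}(\R), \mathcal{A}_{\omega}(\R))$-dense subspace $\mathcal{U}_{\omega}(\C)$, so they coincide, which is \eqref{reconstruction-dual-2}. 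I expect the main obstacle to be the continuity of $V^*_{\psi}$: one must arrange the contour shift and, above all, choose the seminorm index $h$ of $\widetilde{C}_{\omega}(\R^2)$ large enough \emph{relative to the step} $h_0$ that both the $x$- and the $\xi$-integral converge, and this bookkeeping has to be carried out with $(\alpha)$ rather than the stronger condition \eqref{translation-invariant}; a minor additional point is collecting the standard facts on $\mathcal{A}_{\omega}(\R)$ (regularity of the inductive limit). Alternatively, one could realize $V^*_{\psi}$ as essentially the transpose of an STFT mapping property of $\mathcal{A}_{\omega}(\R)$ and prove \eqref{reconstruction-dual-2} by a direct Fourier-inversion computation rather than by density.
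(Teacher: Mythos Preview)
Your proposal is correct and follows essentially the same route as the paper: for $V_\psi$ you show that the rescaled test functions $e^{h\omega(x)-|\xi|/h}g_{x,\xi}$ form a bounded set in $\mathcal{A}_\omega(\R)$, for $V^*_\psi$ you establish the dual bound on $V_{\psi^*}\varphi$ by a contour shift (the paper phrases this as continuity of $V_{\psi^*}:\mathcal{A}_\omega(\R)\to\varinjlim_h C_{\omega,-h,1/h}(\R^2)$ and leaves the details to the reader), and for the reconstruction formula you extend \eqref{eq:reconstructSTFT} from $\mathcal{U}_\omega(\C)$ by density. Your explicit Fourier-transform argument for the density of $\mathcal{U}_\omega(\C)$ in $\mathcal{A}'_\omega(\R)$ in fact supplies the separation step that the paper's ``Hahn--Banach plus reflexivity'' line leaves implicit.
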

\begin{proof}
Choose $L > 0$ such that \eqref{alphaa} is satisfied. We first consider $V_\psi$. Fix $h > 0$ and let $f \in  \mathcal{A}'_{\omega}(\R)$ be arbitrary. Note that $\| V_\psi f\|_{C_{\omega,h,-1/h}(\R^2)} = \sup_{\varphi \in B_h} |\langle f, \varphi \rangle|$, where
$$
B_h = \{ \psi^*(t - x) e^{- 2 \pi i \xi  t} e^{h\omega(x) -|\xi|/h} \, | \, (x,\xi) \in \R^2 \}.
$$
The set $B_h$ is bounded in $\mathcal{A}_{\omega}(\R)$, as follows from the estimate
$$
 \| \psi^*(t - x) e^{- 2 \pi i \xi  t} \|_{1/(2\pi h), -Lh} \leq L\| \psi \|_{1/(2\pi h), Lh}  e^{-h\omega(x) +  |\xi|/h}, \qquad (x,\xi) \in \R^2.
$$
This shows that $V_\psi$ is continuous. Next, we treat $V^{\ast}_{\psi}$. It suffices to show that the linear mapping
$$
V_{\psi^*}: \mathcal{A}_{\omega}(\R) \rightarrow \varinjlim_{h \to \infty} C_{\omega,-h,1/h}(\R^2)
$$
is continuous. This can be done by using an argument similar to the one used in the first part of the proof of Proposition \ref{STFT-test}. The Hahn-Banach theorem and the fact that $\mathcal{A}_{\omega}(\R)$ is reflexive imply that $\mathcal{U}_{\omega}(\C)$ is dense in $\mathcal{A}'_{\omega}(\R)$. Hence, \eqref{reconstruction-dual-2} follows from \eqref{eq:reconstructSTFT}.
\end{proof}
\subsection{The diametral dimension}\label{sect-diam}
 Let $E$ be a vector space.  For $n \in \N$ we denote by $\mathcal{L}_n(E)$ the set consisting of all subspaces $F \subseteq E$ with $\operatorname{dim} F \leq n$. Given subsets $V \subseteq U \subseteq E$ and $n \in \N$, we set
$$
\delta_n(V,U) := \inf \{ \delta > 0 \, | \, \exists F \in \mathcal{L}_n(E) \, : \, V \subseteq \delta U + F \}.
$$
The \emph{diametral dimension} \cite{Jarchow} of a Fr\'echet space $E$ with a fundamental decreasing sequence $(U_n)_{n \in \N}$ of neighbourhoods of zero is defined  as the set
$$
\Delta(E) := \{  (c_n)_{n \in \N} \in \C^\N \, | \, \forall m \in \N \, \exists k \geq m \, : \, \lim_{n \to \infty} c_n \delta_n(U_k,U_m) = 0 \}.
$$
 We have that $\Delta(\Lambda_\infty(\beta)) = \Lambda'_\infty(\beta)$ for any nuclear power series space $\Lambda_\infty(\beta)$ of infinite type \cite[Proposition 10.6.10]{Jarchow}. 

Our aim is to  show that $\Delta(\mathcal{U}_\omega(\C)) = \Lambda'_\infty(\omega^*(n))$. We start with the inclusion $\Lambda'_\infty(\omega^*(n)) \subseteq \Delta(\mathcal{U}_\omega(\C))$. We need the following fact about the diametral  dimension.
\begin{lemma}\label{closed-her} (cf.\ \cite[Theorem 1.6.2.4]{Dubinsky})
Let $E$ and $F$ be  nuclear Fr\'echet  spaces and suppose that  $F$ is  isomorphic to a closed subspace of $E$. Then, $\Delta(E) \subseteq \Delta(F)$.
\end{lemma}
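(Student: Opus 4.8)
The plan is to reduce the statement to the Hilbert space setting, where Kolmogorov diameters are controlled by singular numbers, and then to invoke the fact that singular numbers do not increase under compressions. Since the diametral dimension is invariant under isomorphisms, I would first assume without loss of generality that $F$ is a closed linear subspace of $E$, carrying the subspace topology. Being a nuclear Fréchet space, $E$ admits a fundamental increasing sequence $(\|\cdot\|_n)_{n\in\N}$ of Hilbert seminorms whose linking maps are compact; I would equip $F$ with the restricted seminorms, which again form a fundamental increasing sequence of Hilbert seminorms. As $\Delta$ is independent of the chosen fundamental sequence of seminorms (this follows from the monotonicity of $\delta_n$ in each of its two arguments), it is enough to compute with these. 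Write $U_n$ and $U_n\cap F$ for the associated closed unit balls.

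The key step -- and the only place where nuclearity is truly used -- is the inequality
$$
\delta_n(U_k\cap F,\,U_m\cap F)\ \le\ \delta_n(U_k,\,U_m),\qquad n\in\N,\ k\ge m.
$$
To prove it, I would pass to the local Hilbert spaces: let $\widehat E_n$ be the Hilbert space attached to $\|\cdot\|_n$ and $\iota^E_{m,k}\colon\widehat E_k\to\widehat E_m$ the (compact) linking map, and analogously for $F$. Then $\widehat F_n$ is isometrically the closure of $F$ in $\widehat E_n$, and $\iota^F_{m,k}=P_m\circ\iota^E_{m,k}\circ J_k$, where $J_k\colon\widehat F_k\hookrightarrow\widehat E_k$ is the inclusion and $P_m\colon\widehat E_m\to\widehat F_m$ the orthogonal projection. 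Since $\|J_k\|=\|P_m\|=1$, the ideal property of singular numbers gives $s_j(\iota^F_{m,k})\le s_j(\iota^E_{m,k})$ for all $j$. Finally, for an inclusion of Hilbert spaces the $n$-th Kolmogorov diameter of the unit ball of $\widehat E_k$ in $\widehat E_m$ equals $s_{n+1}(\iota^E_{m,k})$, and likewise for $F$; here I would use that $U_k\cap F$ is $\|\cdot\|_k$-dense (hence $\|\cdot\|_m$-dense) in the unit ball of $\widehat F_k$ to pass to the completions, and the density of $F$ in $\widehat F_m$ to replace the optimal approximating subspace (spanned by singular vectors, which a priori lie only in $\widehat F_m$) by one contained in $F$, at the cost of an arbitrarily small error. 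Combining the two facts yields the displayed inequality.

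Given the inequality, the conclusion is immediate: if $(c_n)_{n\in\N}\in\Delta(E)$ and $m\in\N$, choose $k\ge m$ with $c_n\,\delta_n(U_k,U_m)\to0$; then $|c_n|\,\delta_n(U_k\cap F,U_m\cap F)\le|c_n|\,\delta_n(U_k,U_m)\to0$, so $(c_n)_{n\in\N}\in\Delta(F)$, which gives $\Delta(E)\subseteq\Delta(F)$. I expect the main obstacle to be the bookkeeping in the Hilbert space step -- in particular justifying that one may take the approximating subspaces inside $F$ rather than in its completion $\widehat F_m$ -- but this is routine and is, in essence, \cite[Theorem 1.6.2.4]{Dubinsky}.
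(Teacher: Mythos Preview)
The paper does not give its own proof of this lemma; it states the result with a citation to \cite[Theorem~1.6.2.4]{Dubinsky} and moves on. Your argument is the standard one and is correct: nuclearity allows you to choose a fundamental system of Hilbert seminorms, so that the Kolmogorov diameters $\delta_n(U_k,U_m)$ coincide with the singular numbers of the (compact) linking maps between the local Hilbert spaces; the factorization $\iota^F_{m,k}=P_m\circ\iota^E_{m,k}\circ J_k$ through the isometric inclusion $J_k$ and the orthogonal projection $P_m$ then yields $s_j(\iota^F_{m,k})\le s_j(\iota^E_{m,k})$ by the ideal property, which is exactly the key inequality you need. The bookkeeping you flag --- passing between diameters computed in $E$ (resp.\ $F$) and in the local completions, and replacing an optimal approximating subspace of $\widehat F_m$ by one lying in $F$ --- is handled by the routine density arguments you indicate. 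This is, in essence, the proof one finds in Dubinsky.
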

The proof of the next result is inspired by the proof of \cite[Theorem 3.2]{Langenbruch2016}.
\begin{proposition}\label{lower-estimate}
Let $\omega$ be a weight function satisfying \eqref{translation-invariant}. Then, $ \Lambda'_\infty(\omega^*(n)) \subseteq \Delta(\mathcal{U}_\omega(\C))$.
\end{proposition}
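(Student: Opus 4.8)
The plan is to derive the inclusion from a quantitative upper bound on the diametral numbers of $\mathcal{U}_\omega(\C)$. Write $U_p$ for the unit ball of $\|\cdot\|_{p,p}$, so that $(U_p)_{p\in\N}$ is a fundamental sequence of neighbourhoods of zero. It suffices to establish
\[
\forall m\in\N\;\forall r>0\;\exists k\geq m\;\exists C>0\;\forall n\in\N\;:\;\delta_n(U_k,U_m)\leq Ce^{-r\omega^*(n)}.
\]
Indeed, if $(c_n)_n\in\Lambda'_\infty(\omega^*(n))$, say $|c_n|\leq De^{s\omega^*(n)}$ for some $D,s>0$, then for a given $m$ one applies the bound with $r=s+1$ and, since $\omega^*(n)\to\infty$, obtains $c_n\delta_n(U_k,U_m)\to 0$; hence $(c_n)_n\in\Delta(\mathcal{U}_\omega(\C))$. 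The finitely many small $n$ cause no trouble, since $\delta_n(U_k,U_m)\leq\delta_0(U_k,U_m)\leq 1$ because $U_k\subseteq U_m$, so the constant $C$ can simply be enlarged.

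To prove the bound one constructs, for each $n$, an $n$-dimensional subspace approximating $U_k$ in $\|\cdot\|_{m,m}$, in the spirit of the proof of \cite[Theorem 3.2]{Langenbruch2016}. By Remark \ref{remark-ti}, \eqref{translation-invariant} means $\omega$ satisfies $(A)_h$ for all $h$ below some $h_0>0$; fix $m<h_0$ and $h\in(m,h_0)$. Given $\varphi\in U_k$ and $n$, put $A_n:=(s\omega(s))^{-1}(n)$, so that $\omega(A_n)=\omega^*(n)$ and $A_n\omega(A_n)=n$ by \eqref{l-repr}. Cover $[-A_n,A_n]$ by about $A_n/k$ sub-intervals $I_j$ of length comparable to $k$; on each sub-rectangle $I_j\times[-m,m]$ approximate $\varphi$ --- which is holomorphic and bounded by $1$ on an enlarged rectangle of the form $(I_j)_{\sim k}\times[-k,k]\subseteq V_k$ --- by a polynomial $P_j$ of degree about $k\,\omega^*(n)$, so that a Cauchy--Bernstein--Walsh estimate yields an error $\lesssim\Theta^{-k\omega^*(n)}$, where $\Theta>1$ depends only on the (scale-invariant) shape of the rectangles. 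Glue the $P_j$ by holomorphic cut-offs in $x$ (scaled copies of the functions $H_r$ of Proposition \ref{cut-off}, of transition width comparable to $k$) and multiply the result by one outer cut-off $E^{h}_{\rho_n,A_n}$ with $\rho_n\asymp\omega^*(n)$ to annihilate $\{|x|>A_n\}$; this is possible precisely because $\tfrac{\pi}{2h}$ exceeds the admissible exponents in the pointwise form of $(A)_h$ (Lemma \ref{sup-bounds}), so that the double-exponential decay of $E^h_{\rho_n,A_n}$ in \eqref{P2} overrides both the weight $e^{m\omega(x)}$ and the polynomial growth of the $P_j$ outside $[-A_n,A_n]$. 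The span of $\{E^h_{\rho_n,A_n}\cdot\sum_j\chi_jQ_j:\deg Q_j<k\omega^*(n)\}$ has dimension about $(A_n/k)\cdot k\omega^*(n)=A_n\omega^*(n)=n$, and one checks, using Proposition \ref{cut-off}, that it approximates $U_k$ in $\|\cdot\|_{m,m}$ with error of order $e^{-(\log\Theta)k\,\omega^*(n)+O(\omega^*(n))}$, so that taking $k$ large gives the claim with any prescribed $r$.

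The heart of the matter is to keep the amplitudes introduced by the cut-offs from swallowing the Bernstein--Walsh gain $\Theta^{-k\omega^*(n)}$: this is why one localises on sub-rectangles of length $\sim k$ rather than on a single long rectangle, since the $P_j$ then grow, just outside their own sub-rectangle, only at rate $(1+O(1/k))^{k\omega^*(n)}=e^{O(\omega^*(n))}$, so that $\rho_n\asymp\omega^*(n)$ and hence the bulk amplitude of the outer cut-off is only $e^{O(\omega^*(n))}$. Balancing all of this forces $\omega(A_n)=\omega^*(n)$ and the degree per sub-rectangle to be $\asymp k\,\omega^*(n)$, and it is here that \eqref{l-repr} --- guaranteeing $A_n\omega^*(n)=n$ --- and the freedom in the cut-off width $h\in(m,h_0)$ (to make the amplitude exponent in Proposition \ref{cut-off} small relative to $\log\Theta$) are used decisively; I expect the careful bookkeeping of these estimates to be the main obstacle. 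One last point: when $m\geq h_0$ there is no admissible cut-off width, and this case requires an additional reduction (for instance, by first passing to a narrower strip by means of the three-lines estimate of Lemma \ref{three-lines} together with the decomposition of Proposition \ref{decomp}).
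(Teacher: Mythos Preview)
Your approach is genuinely different from the paper's, and it has a real gap.

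The paper does not estimate $\delta_n(U_k,U_m)$ directly at all. Instead it exhibits a topological embedding $\mathcal{U}_\omega(\C)\hookrightarrow E_0$, $\varphi\mapsto(\varphi(\,\cdot+j))_{j\in\Z}$, into a Fr\'echet space of $\Z$-indexed sequences of entire functions; using $\mathcal{O}(\C)\cong\Lambda_\infty(n)$ one sees $E_0\cong\Lambda_\infty(\beta)$ where $\beta$ is the increasing rearrangement of $\{n+\omega(j):n\in\N,\,j\in\Z\}$. The estimate $\omega^*(n)=O(\beta_n)$ is then quoted from the proof of \cite[Theorem~3.2(b)]{Langenbruch2016}, and the conclusion follows from the closed-subspace heredity $\Delta(E_0)\subseteq\Delta(\mathcal{U}_\omega(\C))$ (Lemma~\ref{closed-her}). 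No cut-offs, no Bernstein--Walsh, no explicit approximating subspaces.

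The gap in your proposal is this: the finite-dimensional subspace you build---spanned by functions of the form $E^h_{\rho_n,A_n}\cdot\sum_j\chi_j Q_j$, with $\chi_j$ scaled copies of $H_r$ and $h<h_0<\infty$---consists of functions holomorphic only on $V_h$, not on $\C$. It is therefore \emph{not} a subspace of $\mathcal{U}_\omega(\C)$, whereas the definition of $\delta_n(U_k,U_m)$ used here requires $F\in\mathcal{L}_n(\mathcal{U}_\omega(\C))$. (Bare polynomials are no help either: they are entire but violate the decay condition and so also lie outside $\mathcal{U}_\omega(\C)$.) This is not a cosmetic issue. The non-existence of bounded entire cut-offs is exactly what separates the projective space $\mathcal{U}_\omega(\C)$ from the inductive space $\mathcal{H}_\omega(\R)$ in \cite{Langenbruch2016}, and your construction is, at bottom, the one tailored to the latter. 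Your acknowledged trouble for $m\geq h_0$ is a symptom of the same obstruction, not a separate edge case. A repair would require either an approximating family that genuinely lives in $\mathcal{U}_\omega(\C)$, or a justified reduction from $\delta_n$ in the Fr\'echet space to Kolmogorov numbers of the linking maps $\mathcal{A}_{\omega,k}(V_k)\to\mathcal{A}_{\omega,m}(V_m)$; you supply neither.
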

\begin{proof} We define  $E$ as the Fr\'echet space consisting of all $(c_{n,j})_{(n,j) \in \N \times \Z} \in \C^{\N \times \Z}$ such that  for all $r > 0$
$$
\sup_{(n,j) \in \N \times \Z} |c_{n,j}| e^{r(n+\omega(j))}  < \infty.
$$
We claim that $E$ contains a closed subspace that is isomorphic to $\mathcal{U}_\omega(\C)$. Before we prove the claim, let us show how it entails the result. Note that $E \cong \Lambda_\infty(\beta)$, where $\beta$ is the increasing rearrangement of the set $\{ n + \omega(j) \, | \, n \in \N, j \in \Z\}$. In the proof of \cite[Theorem 3.2(b)]{Langenbruch2016} it is shown that $\omega^*(n) = O(\beta_n)$. In particular, as $\log t = o(\omega^*(t))$, $E$ is nuclear. Hence, the claim and Lemma \ref{closed-her} yield that
$$
\Lambda'_\infty(\omega^*(n)) \subseteq \Lambda'_\infty(\beta) = \Delta(\Lambda_\infty(\beta)) = \Delta(E) \subseteq \Delta(\mathcal{U}_\omega(\C)).
$$
We now show the claim. Since $\Lambda_\infty(n) \cong \mathcal{O}(\C)$, $E$ is isomorphic to the Fr\'echet space $E_0$ consisting of all $(\varphi_j)_{j \in \Z} \in \mathcal{O}(\C)^\Z$ such that for all $h > 0$
$$
\sup_{j \in \Z} \sup_{|z| \leq h} |\varphi_j(z)|e^{h\omega(j)} < \infty.
$$
The claim now follows by noting that the mapping
$$
\mathcal{U}_\omega(\C) \rightarrow E_0 \, : \, \varphi \mapsto (\varphi(\,\cdot + j))_{j \in \Z}
$$
is a topological embedding. 
\end{proof}
Next, we show that  $\Delta(\mathcal{U}_\omega(\C)) \subseteq \Lambda'_\infty(\omega^*(n))$. To this end, we will use the generalized diametral dimension and   an extension of the condition $(\Omega)$. Both these notions were  introduced in \cite{Langenbruch1987}. The \emph{generalized diametral dimension} of a Fr\'echet space $E$ with a fundamental decreasing sequence $(U_n)_{n \in \N}$ of neighbourhoods of zero is defined  as the set
$$
\widetilde{\Delta}(E) := \{  (c_n)_{n \in \N} \in \C^\N \, | \, \forall m \in \N \, \exists k \geq m \, \exists L > 0 : \, \lim_{n \to \infty} c_n \delta_n(U_k,U_m)^L = 0 \}.
$$
We have that $\widetilde{\Delta}(\Lambda_\infty(\beta)) = \Lambda'_\infty(\beta)$ for any nuclear power series space $\Lambda_\infty(\beta)$ of infinite type with  \cite[Equality (1.4)]{Langenbruch1987}. 

Let $E$ and $\widetilde{E}$ be Fr\'echet spaces with  fundamental decreasing sequences $(U_n)_{n \in \N}$ and $(\widetilde{U}_n)_{n \in \N}$ of neighbourhoods of zero, respectively. Let $d: E \rightarrow \widetilde{E}$ be a continuous linear mapping. The triple $(d,E,\widetilde{E})$ is said to satisfy $(\Omega)$  if
$$
\forall n \in \N \, \exists m \geq n \,  \forall k \in \N \, \exists C,L > 0 \, \forall r > 0 \, : \, 
\widetilde{U}_m \subseteq e^{-r} \widetilde{U}_n + Ce^{Lr} d(U_k).
$$
Note that $E$ satisfies $(\Omega)$ if and only if $(\operatorname{id}_{E},E, E)$ satisfies $(\Omega)$.  The next result is a substitute for Lemma \ref{closed-her}.
\begin{lemma}\label{dd-omega} \emph{\cite[Lemma 1.3(a)]{Langenbruch1987}}
Let $E$ and $\widetilde{E}$ be Fr\'echet spaces and let $d: E \rightarrow \widetilde{E}$ be a continuous linear mapping. If $(d,E,\widetilde{E})$ satisfies $(\Omega)$, then $\Delta(E) \subseteq \widetilde{\Delta}(\widetilde{E})$.
\end{lemma}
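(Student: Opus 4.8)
The plan is to unwind both definitions and transport the finite-dimensional approximations of the pair $(U_{k^*}, U_p)$ through the map $d$. Fix $(c_n)_{n \in \N} \in \Delta(E)$ and an arbitrary index $m_0 \in \N$; I must produce $k_0 \geq m_0$ and $L_0 > 0$ with $\lim_n c_n \delta_n(\widetilde{U}_{k_0}, \widetilde{U}_{m_0})^{L_0} = 0$. First I apply the hypothesis $(\Omega)$ for $(d, E, \widetilde{E})$ to $n = m_0$ to obtain $m_1 \geq m_0$ with the stated property. Next, using continuity of $d$, I choose $p \in \N$ with $d(U_p) \subseteq \widetilde{U}_{m_0}$, and then, since $(c_n) \in \Delta(E)$, I choose $k^* \geq p$ with $\lim_n c_n \delta_n(U_{k^*}, U_p) = 0$. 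Only at this point do I feed $k^*$ into the remaining universally quantified clause of $(\Omega)$ to obtain $C, L > 0$ with $\widetilde{U}_{m_1} \subseteq e^{-r} \widetilde{U}_{m_0} + Ce^{Lr} d(U_{k^*})$ for every $r > 0$.

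The key elementary point is that $d$ does not enlarge the quantities $\delta_n$: if $U_{k^*} \subseteq \delta U_p + F$ with $\dim F \leq n$, then applying $d$ and using $d(U_p) \subseteq \widetilde{U}_{m_0}$ gives $d(U_{k^*}) \subseteq \delta \widetilde{U}_{m_0} + d(F)$ with $\dim d(F) \leq n$; taking the usual $\varepsilon$ in the infimum this yields $\delta_n(d(U_{k^*}), \widetilde{U}_{m_0}) \leq \delta_n(U_{k^*}, U_p)$. Substituting this into the inclusion coming from $(\Omega)$ gives, for all $r > 0$ and all $n$,
\[
\delta_n(\widetilde{U}_{m_1}, \widetilde{U}_{m_0}) \;\leq\; e^{-r} + Ce^{Lr}\, \delta_n(U_{k^*}, U_p).
\]
Optimizing in $r$ (choosing $r = (L+1)^{-1} \log\big(1/(CL\,\delta_n(U_{k^*}, U_p))\big)$ when $\delta_n(U_{k^*}, U_p) < 1/(CL)$, and otherwise using the trivial bound $\delta_n(\widetilde{U}_{m_1}, \widetilde{U}_{m_0}) \leq 1$, valid since $m_1 \geq m_0$) produces a constant $C' > 0$, depending only on $C$ and $L$, with $\delta_n(\widetilde{U}_{m_1}, \widetilde{U}_{m_0})^{L+1} \leq C'\, \delta_n(U_{k^*}, U_p)$ for every $n$. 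Multiplying by $c_n$ and letting $n \to \infty$ gives $\lim_n c_n \delta_n(\widetilde{U}_{m_1}, \widetilde{U}_{m_0})^{L+1} = 0$; hence $k_0 = m_1$ and $L_0 = L + 1$ do the job, and since $m_0$ was arbitrary, $(c_n) \in \widetilde{\Delta}(\widetilde{E})$.

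I expect no serious obstacle: the proof is a quantifier-bookkeeping exercise combined with the two routine facts that a linear image of a subspace of dimension $\leq n$ again has dimension $\leq n$, and that $\delta_n$ behaves well under the inclusion $d(U_p) \subseteq \widetilde{U}_{m_0}$. The step demanding the most care is the order of the quantifiers in $(\Omega)$ — $m_1$ must be fixed before selecting $p$ and $k^*$, and $C, L$ only afterwards — together with the elementary optimization in $r$, which must be split into the two regimes above. It is worth remarking that the argument nowhere uses nuclearity or even that $\delta_n \to 0$, so it holds for arbitrary Fr\'echet spaces, matching the generality of the statement.
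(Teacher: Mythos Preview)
Your proof is correct. The paper does not supply its own proof of this lemma; it merely cites \cite[Lemma 1.3(a)]{Langenbruch1987}, so there is nothing to compare at the level of argument. Your write-up is precisely the standard proof: push a finite-dimensional approximation of $U_{k^*}$ in $U_p$ through $d$ into $\widetilde{U}_{m_0}$, insert it into the $(\Omega)$-inclusion, and optimize in $r$ to convert the additive bound $e^{-r}+Ce^{Lr}\delta_n$ into the multiplicative one $\delta_n^{1/(L+1)}$. The quantifier bookkeeping is handled correctly (in particular, $C$ and $L$ are chosen only after $k^*$), and the split into the regimes $\delta_n(U_{k^*},U_p)<1/(CL)$ and its complement is exactly what is needed to make the optimization legitimate. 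One cosmetic remark: your observation that $m_1$ must be fixed before $p$ is not actually required, since $p$ depends only on $m_0$; the essential constraint is that $k^*$ precedes the choice of $C,L$, which you respect.
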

The following observation generalizes the fact that condition $(\Omega)$ for single Fr\'echet spaces is inherited to quotient spaces.
\begin{lemma}\label{quotient-ext}
Let $E_i$ and $\widetilde{E}_i$ be Fr\'echet spaces and let $d_i: E_i \rightarrow \widetilde{E}_i$ be a continuous linear mapping for $i = 1,2$. Let $T: E_1 \rightarrow E_2$ and $\widetilde{T}: \widetilde{E}_1 \rightarrow \widetilde{E}_2$ be surjective continuous linear mappings such that $\widetilde{T} \circ d_1 = d_2 \circ T$. If $(d_1,E_1,\widetilde{E}_1)$ satisfies $(\Omega)$, then so does $(d_2,E_2,\widetilde{E}_2)$.
\end{lemma}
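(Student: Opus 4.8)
The plan is to mimic the classical argument that condition $(\Omega)$ descends to quotients, now in the parametrised form involving the maps $d_i$. Fix absolutely convex fundamental decreasing sequences $(U^i_n)_{n\in\N}$ of neighbourhoods of zero in $E_i$ and $(\widetilde U^i_n)_{n\in\N}$ in $\widetilde E_i$, for $i=1,2$. The structural input I would use is the open mapping theorem: since $\widetilde T$ is a surjective continuous linear map between Fr\'echet spaces, it is open, so $\widetilde T(\widetilde U^1_m)$ is a neighbourhood of zero in $\widetilde E_2$ for every $m$; consequently, given any $m'$ there is $m_0$ with $\widetilde U^2_{m_0}\subseteq \widetilde T(\widetilde U^1_{m'})$, and then $\widetilde U^2_m\subseteq\widetilde T(\widetilde U^1_{m'})$ for all $m\geq m_0$ by monotonicity of the sequence. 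Continuity of $T$ and of $\widetilde T$ will be used in the form: for each $k$ there is $k'$ with $T(U^1_{k'})\subseteq U^2_k$, and for each $n$ there is $n'$ with $\widetilde T(\widetilde U^1_{n'})\subseteq \widetilde U^2_n$.

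With these preliminaries, here is how I would verify $(\Omega)$ for $(d_2,E_2,\widetilde E_2)$. Let $n\in\N$ be given. Choose $n'$ with $\widetilde T(\widetilde U^1_{n'})\subseteq \widetilde U^2_n$, apply the hypothesis $(\Omega)$ for $(d_1,E_1,\widetilde E_1)$ to $n'$ to obtain $m'\geq n'$ such that for every $k'$ there exist $C,L>0$ with $\widetilde U^1_{m'}\subseteq e^{-r}\widetilde U^1_{n'}+Ce^{Lr}d_1(U^1_{k'})$ for all $r>0$, and then (open mapping theorem) fix $m\geq n$ with $\widetilde U^2_m\subseteq \widetilde T(\widetilde U^1_{m'})$. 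This $m$, which depends only on $n$, is the output. For the inner quantifier, let $k\in\N$ be arbitrary, pick $k'$ with $T(U^1_{k'})\subseteq U^2_k$, and take the constants $C,L$ supplied above for this $k'$. Applying the linear map $\widetilde T$ to the inclusion $\widetilde U^1_{m'}\subseteq e^{-r}\widetilde U^1_{n'}+Ce^{Lr}d_1(U^1_{k'})$ and using $\widetilde T(\widetilde U^1_{n'})\subseteq \widetilde U^2_n$ together with the intertwining identity $\widetilde T\circ d_1=d_2\circ T$, which gives $\widetilde T(d_1(U^1_{k'}))=d_2(T(U^1_{k'}))\subseteq d_2(U^2_k)$, one gets $\widetilde T(\widetilde U^1_{m'})\subseteq e^{-r}\widetilde U^2_n+Ce^{Lr}d_2(U^2_k)$, hence $\widetilde U^2_m\subseteq e^{-r}\widetilde U^2_n+Ce^{Lr}d_2(U^2_k)$ for all $r>0$. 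This is exactly condition $(\Omega)$ for $(d_2,E_2,\widetilde E_2)$.

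The only step that is more than formal bookkeeping is the appeal to the open mapping theorem to ensure $\widetilde T(\widetilde U^1_{m'})$ is a neighbourhood of zero in $\widetilde E_2$; this is where surjectivity of $\widetilde T$ and Fr\'echetness (Baire category) of the spaces enter. Everything else amounts to tracking the quantifiers carefully: one must check that the produced $m$ depends on $n$ alone, and that the choice of $k'$ (and thus of $C,L$) is permitted to depend on $k$, which matches the quantifier order in the definition of $(\Omega)$ for a triple. Note also that surjectivity of $T$ itself is not needed for this implication — only its continuity — although it is harmless to assume it. Finally, it is worth recording as a special case that taking $E_i=\widetilde E_i$ and $d_i=\operatorname{id}$ recovers the stability of $(\Omega)$ for single Fr\'echet spaces under quotients.
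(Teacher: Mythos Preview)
Your argument is correct and is precisely the standard quotient-type argument the paper has in mind; the paper itself omits the proof entirely, stating only that it is ``straightforward and therefore left to the reader.'' Your additional remark that surjectivity of $T$ is not actually used (only its continuity, via the choice of $k'$) is a valid observation beyond what the paper records.
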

\begin{proof}
The proof is straightforward and therefore left to the reader.
\end{proof}
\begin{proposition}\label{upper-estimate}
Let $\omega$ be a weight function satisfying $(\alpha)$. Then, $\Delta(\mathcal{U}_\omega(\C)) \subseteq \Lambda'_\infty(\omega^*(n))$.
\end{proposition}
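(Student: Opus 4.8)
The plan is to compare $\mathcal{U}_\omega(\C)$ with the space $\mathcal{A}'_\omega(\R)$ of Fourier hyperfunctions of fast decay by means of the short-time Fourier transform, and then to invoke the computation of the generalized diametral dimension of $\mathcal{A}'_\omega(\R)$ carried out in \cite{Langenbruch2016}. Let $d : \mathcal{U}_\omega(\C) \to \mathcal{A}'_\omega(\R)$ denote the (continuous) inclusion and let $\iota : C_\omega(\R^2) \to \widetilde{C}_\omega(\R^2)$ be the natural inclusion, which is continuous because $C_{\omega,h,h}(\R^2)$ embeds continuously into $C_{\omega,h,-1/h}(\R^2)$ for every $h > 0$. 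Fix windows $\psi, \gamma \in \mathcal{U}_\omega(\C)$ with $c := (\gamma, \psi)_{L^2} \neq 0$. A direct computation (an application of Fubini's theorem, legitimate by the rapid decay of the functions involved) shows that
\[
\frac{1}{c} V^*_\gamma \circ \iota = d \circ \frac{1}{c} V^*_\gamma
\]
as mappings $C_\omega(\R^2) \to \mathcal{A}'_\omega(\R)$, the adjoint STFT on the left being that of Proposition \ref{STFT-dual-2} and on the right that of Proposition \ref{STFT-test}. By \eqref{eq:reconstructSTFT} (cf.\ the remark after Proposition \ref{STFT-test}) and \eqref{reconstruction-dual-2}, the mapping $\frac{1}{c} V^*_\gamma$ is surjective both from $C_\omega(\R^2)$ onto $\mathcal{U}_\omega(\C)$ and from $\widetilde{C}_\omega(\R^2)$ onto $\mathcal{A}'_\omega(\R)$. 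Hence, by Lemma \ref{quotient-ext} applied with $T = \widetilde{T} = \frac{1}{c} V^*_\gamma$, in order to show that $(d, \mathcal{U}_\omega(\C), \mathcal{A}'_\omega(\R))$ satisfies $(\Omega)$ it suffices to show that $(\iota, C_\omega(\R^2), \widetilde{C}_\omega(\R^2))$ satisfies $(\Omega)$.

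For the latter I would argue directly, using as fundamental sequences of neighbourhoods of zero the unit balls $U_k$ of $C_{\omega,k,k}(\R^2)$ in $C_\omega(\R^2)$ and $\widetilde{U}_j$ of $C_{\omega,j,-1/j}(\R^2)$ in $\widetilde{C}_\omega(\R^2)$. Given $n$, set $m = n+1$ and let $k$ be arbitrary; for $f \in \widetilde{U}_m$ one has $|f(x,\xi)| \leq e^{-m\omega(x)+|\xi|/m}$. For $r > 0$, choose a continuous $\chi : \R^2 \to [0,1]$ equal to $1$ on $\{(x,\xi) : |x| \leq \omega^{-1}(r),\ |\xi| \leq n(n+1)r\}$ and supported in $\{(x,\xi) : |x| \leq \omega^{-1}(r)+1,\ |\xi| \leq n(n+1)r+1\}$, and put $h = \chi f$, $g = (1-\chi)f$, so that $f = g + \iota h$ with $g \in \widetilde{C}_\omega(\R^2)$ and $h \in C_\omega(\R^2)$ (the latter being continuous with compact support). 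Wherever $g$ is nonzero one has $|x| \geq \omega^{-1}(r)$ or $|\xi| \geq n(n+1)r$, whence
\[
|g(x,\xi)|\, e^{n\omega(x) - |\xi|/n} \leq e^{(n-m)\omega(x)} e^{|\xi|(1/m - 1/n)} \leq e^{-r},
\]
since in either case one of the two factors is $\leq e^{-r}$ and the other is $\leq 1$; thus $g \in e^{-r}\widetilde{U}_n$. On the support of $h$ one has $|x| \leq \omega^{-1}(r)+1$ and $|\xi| \leq n(n+1)r+1$, and using the bound on $f$ together with $\omega(\omega^{-1}(r)+1) \leq C'(r+1)$, valid by \eqref{general} (hence by $(\alpha)$), one gets $\| h \|_{C_{\omega,k,k}(\R^2)} \leq C e^{Lr}$ with $C, L$ depending only on $n$ and $k$. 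Therefore $\widetilde{U}_m \subseteq e^{-r} \widetilde{U}_n + C e^{Lr} \iota(U_k)$, which is condition $(\Omega)$ for the triple $(\iota, C_\omega(\R^2), \widetilde{C}_\omega(\R^2))$.

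Combining the two paragraphs, $(d, \mathcal{U}_\omega(\C), \mathcal{A}'_\omega(\R))$ satisfies $(\Omega)$, so Lemma \ref{dd-omega} gives $\Delta(\mathcal{U}_\omega(\C)) \subseteq \widetilde{\Delta}(\mathcal{A}'_\omega(\R))$. Finally, the computation of the generalized diametral dimension of $\mathcal{A}'_\omega(\R)$ in \cite{Langenbruch2016} (the result alluded to in the introduction) yields $\widetilde{\Delta}(\mathcal{A}'_\omega(\R)) \subseteq \Lambda'_\infty(\omega^*(n))$, and hence $\Delta(\mathcal{U}_\omega(\C)) \subseteq \Lambda'_\infty(\omega^*(n))$, as desired.

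The main obstacle is the verification of $(\Omega)$ for the pair $(C_\omega(\R^2), \widetilde{C}_\omega(\R^2))$. A cutoff in the frequency variable $\xi$ alone does not suffice: in $C_\omega(\R^2)$ the rate of decay in $x$ is coupled to the rate of decay in $\xi$, whereas $\widetilde{C}_\omega(\R^2)$ retains only the $x$-decay, so the ``core'' piece produced by a pure frequency cutoff would in general fail to lie in $C_\omega(\R^2)$; a joint cutoff in $(x,\xi)$, which splits off a genuinely compactly supported function, remedies this at the cost of a single use of the regularity condition \eqref{general}. The remaining ingredients, namely the intertwining identity and the surjectivity of $\frac{1}{c}V^*_\gamma$, are routine consequences of the reconstruction formulas and Fubini's theorem.
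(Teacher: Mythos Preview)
Your proof is correct and follows essentially the same route as the paper: reduce to showing that the triple $(d,\mathcal{U}_\omega(\C),\mathcal{A}'_\omega(\R))$ satisfies $(\Omega)$, transfer this via Lemma~\ref{quotient-ext} and the STFT adjoint $V^*_\gamma$ to the triple $(\iota,C_\omega(\R^2),\widetilde{C}_\omega(\R^2))$, and then conclude by Lemma~\ref{dd-omega} and Langenbruch's computation of $\widetilde{\Delta}(\mathcal{A}'_\omega(\R))$. The only difference is that you spell out the cut-off argument for the $(\Omega)$ condition on $(\iota,C_\omega(\R^2),\widetilde{C}_\omega(\R^2))$, which the paper simply calls ``standard''; your joint $(x,\xi)$-cutoff and the accompanying estimates are correct.
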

\begin{proof} Recall that, since $\omega$ satisfies $(\alpha)$, $\mathcal{A}_{\omega}(\R)$  coincides with the space $\mathcal{H}^\infty_{\omega}(\R)$ from \cite[p.\ 91]{Langenbruch2016}.
In \cite[Section 4]{Langenbruch2016} (see particularly the proof of \cite[Theorem 4.6]{Langenbruch2016}) it is shown that $\widetilde{\Delta}(\mathcal{A}'_\omega(\R)) \subseteq \Lambda'_\infty(\omega^*(n))$. Let $d: \mathcal{U}_\omega(\C) \rightarrow \mathcal{A}'_{\omega}(\R)$ be the inclusion mapping (cf.\ the discussion before Proposition \ref{STFT-dual-2}).
By Lemma \ref{dd-omega}, it suffices to show that $(d, \mathcal{U}_\omega(\C), \mathcal{A}'_{\omega}(\R))$ satisfies $(\Omega)$. We shall achieve this by combining Lemma \ref{quotient-ext} with the results from Subsection \ref{STFT}.  Set $E_1 = C_\omega(\R^2)$, $\widetilde{E}_1 =  \widetilde{C}_\omega(\R^2)$, and let $d_1:  C_\omega(\R^2) \rightarrow  \widetilde{C}_\omega(\R^2)$ be the inclusion mapping.  Set  $E_2 =  \mathcal{U}_\omega(\C)$, $\widetilde{E}_2 = \mathcal{A}'_{\omega}(\R)$ and $d_2 = d$. Next, fix a non-zero window $\psi \in \mathcal{U}_{\omega}(\C)$ and consider  the linear mappings $T = V^*_\psi : C_\omega(\R^2) \rightarrow  \mathcal{U}_\omega(\C)$ and $\widetilde{T} = V^*_\psi : \widetilde{C}_\omega(\R^2) \rightarrow  \mathcal{A}'_\omega(\R)$. The mappings $T$ and $\widetilde{T}$ are continuous and surjective by Proposition \ref{STFT-test} (and \eqref{eq:reconstructSTFT}) and Proposition \ref{STFT-dual-2}, respectively. Moreover, it is clear that $\widetilde{T} \circ d_1 = d_2 \circ T$. Hence, by Lemma \ref{quotient-ext}, it is enough to prove that $(d_1, C_\omega(\R^2),\widetilde{C}_\omega(\R^2))$ satisfies $(\Omega)$. This can be done by using a standard argument involving cut-off functions.
\end{proof}
We are ready to show Theorem \ref{main}.
\begin{proof}[Proof of Theorem \ref{main}]
 Theorem \ref{main-1} yields that $\mathcal{U}_{\omega}(\C) \cong \Lambda_{\infty}(\beta)$ for some nuclear power series space $\Lambda_{\infty}(\beta)$ of infinite type. By Proposition \ref{lower-estimate} and Proposition \ref{upper-estimate}, we have that
 $$
  \Lambda' _{\infty}(\beta) = \Delta(\Lambda_{\infty}(\beta)) = \Delta (\mathcal{U}_{\omega}(\C)) =  \Lambda'_\infty(\omega^*(n)).
 $$
Hence, the sequences $\beta$ and  $(\omega^*(n))_{n \in \N}$ are $O$-equivalent and, thus, $\mathcal{U}_{\omega}(\C) \cong \Lambda_{\infty}(\beta) \cong  \Lambda_\infty(\omega^*(n))$.
\end{proof}
\begin{corollary}
Let $\omega$ be a weight function satisfying $(\alpha)$. Then, $\mathcal{U}_\omega(\C)$ is stable, i.e., $\mathcal{U}_\omega(\C) \cong \mathcal{U}_\omega(\C) \times \mathcal{U}_\omega(\C)$.
\end{corollary}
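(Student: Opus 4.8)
The plan is to reduce everything to Theorem \ref{main} together with two elementary facts about power series spaces of infinite type: that a finite product of copies of such a space is again such a space, with the exponent sequences interleaved, and that two exponent sequences with bounded ratio produce the same Fr\'echet space.

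By Theorem \ref{main} we have $\mathcal{U}_\omega(\C)\cong\Lambda_\infty(\beta)$ with $\beta=(\beta_n)_n$, $\beta_n=\omega^*(n)$, so it suffices to prove that $\Lambda_\infty(\beta)$ is stable. To this end I would write down the interleaving map
$$
\Phi:\Lambda_\infty(\beta)\times\Lambda_\infty(\beta)\longrightarrow\C^{\N},\qquad \Phi\bigl((x_n)_n,(y_n)_n\bigr)=(z_n)_n,\quad z_{2n-1}=x_n,\ z_{2n}=y_n.
$$
For every $r>0$ one has $\sup_n|z_n|e^{r\gamma_n}=\max\bigl(\sup_n|x_n|e^{r\beta_n},\sup_n|y_n|e^{r\beta_n}\bigr)$, where $\gamma_n:=\beta_{\lceil n/2\rceil}$ (note $\gamma$ is again nondecreasing and tends to $\infty$ since $\beta$ is). Hence $\Phi$ is a linear isomorphism of $\Lambda_\infty(\beta)\times\Lambda_\infty(\beta)$ onto $\Lambda_\infty(\gamma)$. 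It remains to identify $\Lambda_\infty(\gamma)$ with $\Lambda_\infty(\beta)$.

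For this I would use that $\omega^*$ is a weight function satisfying $(\alpha)$, which was observed right after its definition in Section \ref{ssr}. Since $\omega^*$ is increasing, $\gamma_n=\omega^*(\lceil n/2\rceil)\le\omega^*(n)=\beta_n$. On the other hand $n\le 2\lceil n/2\rceil$, so using monotonicity and then condition $(\alpha)$ (that is, \eqref{general-alpha} for $\omega^*$) with $t=s=\lceil n/2\rceil$ gives a constant $C\ge1$ with
$$
\beta_n=\omega^*(n)\le\omega^*\bigl(2\lceil n/2\rceil\bigr)\le C\bigl(2\omega^*(\lceil n/2\rceil)+1\bigr)\le 3C\bigl(\gamma_n+1\bigr)\le 3C\Bigl(1+\tfrac{1}{\gamma_1}\Bigr)\gamma_n,
$$
using $\gamma_n\ge\gamma_1=\omega^*(1)>0$. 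Thus $\gamma$ and $\beta$ have bounded ratio, and consequently the seminorm systems $p_r(c)=\sup_n|c_n|e^{r\beta_n}$ and $q_r(c)=\sup_n|c_n|e^{r\gamma_n}$ satisfy $q_r\le p_{ar}$ and $p_r\le q_{ar}$ for a suitable $a>0$; they therefore generate the same topology, so $\Lambda_\infty(\gamma)=\Lambda_\infty(\beta)$ as Fr\'echet spaces.

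Putting the pieces together,
$$
\mathcal{U}_\omega(\C)\times\mathcal{U}_\omega(\C)\cong\Lambda_\infty(\beta)\times\Lambda_\infty(\beta)\cong\Lambda_\infty(\gamma)=\Lambda_\infty(\beta)\cong\mathcal{U}_\omega(\C),
$$
which is the assertion. There is no genuine obstacle here: granting Theorem \ref{main}, the argument is entirely routine, the only points requiring a little care being the explicit check that $\Phi$ lands in (and exhausts) $\Lambda_\infty(\gamma)$ and the bookkeeping with the ceiling and the additive constant in $(\alpha)$ when verifying $\gamma\asymp\beta$.
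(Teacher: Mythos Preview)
Your proposal is correct and follows essentially the same approach as the paper: reduce via Theorem \ref{main} to the stability of $\Lambda_\infty(\omega^*(n))$, and deduce the latter from the fact that $\omega^*$ satisfies $(\alpha)$. The paper's proof is a two-line sketch (``This follows from the fact that $\omega^*$ satisfies $(\alpha)$''), and you have simply written out the standard interleaving argument and the $O$-equivalence $\omega^*(\lceil n/2\rceil)\asymp\omega^*(n)$ that justify that sentence.
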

\begin{proof}
By Theorem \ref{main}, it suffices to show that $\Lambda_\infty(\omega^*(n))$ is stable. This follows from the fact that $\omega^*$ satisfies $(\alpha)$.
\end{proof}
We have the following open problem.
\begin{problem} \label{open}
Let $\omega$ be a weight function satisfying $(A)_\infty$. Show that $\mathcal{U}_{\omega}(\C) \cong  \Lambda_\infty(\omega^*(n))$.
\end{problem}
Let $\omega$ be a weight function satisfying $(A)_\infty$.  Problem \ref{open} is equivalent to $\mathcal{U}_{\omega}(\C) \cong  \Lambda_\infty(\beta)$ for some power series space $\Lambda_{\infty}(\beta)$ of infinite type and $\Delta (\mathcal{U}_{\omega}(\C)) =  \Lambda'_\infty(\omega^*(n))$ (cf.\ the proof of Theorem \ref{main}). The former is shown in Theorem \ref{main-1} and the inclusion $ \Lambda'_\infty(\omega^*(n)) \subseteq \Delta (\mathcal{U}_{\omega}(\C)) $ is proven in Proposition \ref{lower-estimate}.
However, we do not know how to show that $\Delta(\mathcal{U}_\omega(\C)) \subseteq \Lambda'_\infty(\omega^*(n))$ without assuming that $\omega$ satisfies $(\alpha)$.

\subsection{Examples} We now determine up to $O$-equivalence the function $\omega^*$ for various weight functions $\omega$; see also \cite[Section 5]{Langenbruch2016}.  Given two weight functions $\omega$ and $\eta$, we write $\omega \asymp \eta$ to indicate that $\omega = O(\eta)$ and $\eta = O(\omega)$. 
\begin{itemize}
\item[$(i)$] $\omega(t) = t^{1/\nu}$, where $\nu > 0$. Then, $$\omega^*(t) \asymp t^{1/(\nu+1)}.$$
\item[$(ii)$] $\omega(t) = (\log t)^{a_1} (\log \log t)^{a_2}$ for $t$ large enough, where  $a_1 > 1$ and $a_2 \in \R$ or $a_1 = 1$ and $a_2 > 0$. Then,  
$$\omega^*(t) \asymp (\log t)^{a_1} (\log \log t)^{a_2}.$$
\item[$(iii)$] $\omega(t) = t^{1/\nu} (\log t)^{a}$ for $t$ large enough, where  $\nu > 0$ and $a \in \R$. Then, 
$$
\omega^*(t) \asymp t^{1/(\nu+1)} (\log t)^{a\nu/(\nu+1)}.
$$
\item[$(iv)$] $\omega(t) = e^{(\log t)^a}$, where $ 0 < a < 1$.  We will use some results from the theory of regularly varying functions \cite{BGT} to determine  $\omega^*$. By \cite[Proposition 1.5.15]{BGT}, the inverse of the function $t\omega(t)$ is asymptotically equivalent to $t\omega^\#(t)$, where $\omega^\#$ is the de Bruijn conjugate of $\omega$ \cite[p.\ 29]{BGT}. Hence, by \eqref{l-repr}, $\omega^*$ is asymptotically equivalent to $1/\omega^\#$. The function $\omega^\#$ is determined in \cite[p.\ 435, Example 3]{BGT}. From this result, we obtain that 
\begin{gather*}
\omega^*(t) \asymp  e^{(\log t)^a}, \qquad \mbox{if $a < 1/2$}, \\
\omega^*(t) \asymp  e^{(\log t)^a - a(\log t)^{2a-1}}, \qquad \mbox{if $1/2 \leq a < 2/3$}, \\
\omega^*(t) \asymp   e^{(\log t)^a - a(\log t)^{2a-1} + \frac{a}{2}(3a-1)(\log t)^{3a-2}}, \qquad \mbox{if $2/3 \leq a < 3/4$},
\end{gather*}
\end{itemize}
and so on.

Finally, we apply Theorem \ref{main} to obtain sequence space representations of the projective Gelfand-Shilov spaces $\Sigma^1_\nu$ and $\Sigma^\nu_1$. Given $\mu, \nu > 0$, we define $\Sigma^\mu_\nu$ as the Fr\'echet space consisting of all $\varphi \in C^\infty(\R)$ such that for all $h > 0$
$$
\sup_{p,q \in \N}\sup_{x \in \R} \frac{h^{p+q} |\varphi^{(p)}(x) x^q| }{p!^\mu q!^\nu} < \infty.
$$
The spaces $\Sigma^\mu_\nu$ are the projective counterparts of the classical Gelfand-Shilov spaces $\mathcal{S}^\mu_\nu$ \cite[Chapter IV]{GS}.
\begin{theorem}\label{main-example}
The spaces $\Sigma^1_\nu$ and $\Sigma^\nu_1$ are both isomorphic to $\Lambda_\infty( n^{1/(\nu+1)})$ for $\nu > 0$.
\end{theorem}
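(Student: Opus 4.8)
The plan is to reduce both spaces to $\mathcal{U}_\omega(\C)$ for the specific weight $\omega(t) = t^{1/\nu}$ and then invoke Theorem \ref{main} together with part $(i)$ of the list of examples computed above. First I would check that $\omega(t) = t^{1/\nu}$ is a weight function in the sense of this paper: it is increasing, continuous, vanishes at $0$, and $\log t = o(t^{1/\nu})$. Moreover it satisfies condition $(\alpha)$, since $\omega(2t) = 2^{1/\nu}\omega(t)$, so \eqref{general-alpha} holds with $C = 2^{1/\nu}$. (In passing, $\int_0^\infty t^{1/\nu} e^{-\mu t}\, dt < \infty$ for every $\mu > 0$, which is consistent with $\mathcal{U}_\omega(\C)$ being non-trivial.)

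The main step is to identify $\Sigma^1_\nu$ with $\mathcal{U}_\omega(\C)$ for $\omega(t) = t^{1/\nu}$ by a Paley--Wiener type argument. Given $\varphi \in \Sigma^1_\nu$, the bounds $\sup_x |\varphi^{(p)}(x)| \leq C_h h^{-p} p!$, valid for every $h > 0$, force through the Taylor series the extension of $\varphi$ to an entire function that is bounded on every horizontal strip $V_k$; conversely Cauchy estimates on strips recover the derivative bounds. The bounds $\sup_x |x^q \varphi(x)| \leq C_h h^{-q} q!^\nu$ translate, after optimizing over $q$, into $\sup_{x \in \R} |\varphi(x)| e^{a|x|^{1/\nu}} < \infty$ for every $a > 0$; combining this decay on $\R$ with boundedness on strips and a Phragm\'en--Lindel\"of / subharmonicity argument (applied to $\log|\varphi|$ on $V_k$) one upgrades it to $\sup_{z \in V_k} |\varphi(z)| e^{a\omega(|\operatorname{Re} z|)} < \infty$ for all $k, a > 0$, i.e.\ $\varphi \in \mathcal{U}_\omega(\C)$; keeping track of the constants shows that the correspondence is a topological isomorphism. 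I expect this identification to be the most technical point, although it is essentially a known Paley--Wiener characterisation of Gelfand--Shilov spaces and could be quoted from the literature on $\Sigma^\mu_\nu$ (cf.\ \cite{C-G-P-R, DV, PPV}).

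For $\Sigma^\nu_1$ I would use that the Fourier transform is a topological isomorphism $\mathcal{F} : \Sigma^\nu_1 \to \Sigma^1_\nu$: it interchanges the roles of the two exponents, turning $\varphi^{(p)} x^q$-estimates into $\widehat{\varphi}^{(q)} \xi^p$-estimates up to harmless constants, so $\Sigma^\nu_1 \cong \Sigma^1_\nu$. Finally, Theorem \ref{main} gives $\mathcal{U}_\omega(\C) \cong \Lambda_\infty(\omega^*(n))$, and part $(i)$ of the examples gives $\omega^*(t) \asymp t^{1/(\nu+1)}$; since power series spaces of infinite type attached to $O$-equivalent exponent sequences are isomorphic, $\Lambda_\infty(\omega^*(n)) \cong \Lambda_\infty(n^{1/(\nu+1)})$. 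Chaining the isomorphisms yields $\Sigma^1_\nu \cong \Sigma^\nu_1 \cong \mathcal{U}_\omega(\C) \cong \Lambda_\infty(n^{1/(\nu+1)})$, which is the assertion.
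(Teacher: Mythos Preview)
Your approach is correct and matches the paper's exactly: reduce to $\Sigma^1_\nu$ via the Fourier transform, identify $\Sigma^1_\nu$ with $\mathcal{U}_{t^{1/\nu}}(\C)$, and then apply Theorem~\ref{main} together with example~$(i)$. The paper proceeds identically but simply cites \cite[Chapter~IV]{GS} for both the Fourier isomorphism $\Sigma^\nu_1 \cong \Sigma^1_\nu$ and the identification $\Sigma^1_\nu = \mathcal{U}_{t^{1/\nu}}(\C)$, rather than sketching the Paley--Wiener argument.
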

\begin{proof}
Since  the Fourier transform is an isomorphism between $\Sigma^1_\nu$ and $\Sigma^\nu_1$ (cf.\ \cite[Chapter IV, Section 6]{GS}), it suffices to show that  $\Sigma^1_\nu$ is  isomorphic to $\Lambda_\infty( n^{1/(\nu+1)})$. We have that $\Sigma^1_\nu = \mathcal{U}_{t^{1/\nu}}(\C)$ (cf.\ \cite[Chapter IV, Section 2]{GS}).  Hence, the result follows from Theorem \ref{main} and the above example $(i)$.
\end{proof}
\noindent {\bf Acknowledgements.} The author would like to thank the anonymous referee for pointing out Theorem \ref{basis}.

\end{document}